\newtheorem{theorem}{Theorem}[section]
\newtheorem{corollary}[theorem]{Corollary}
\newtheorem{lemma}[theorem]{Lemma}
\newtheorem{definition}[theorem]{Definition}
\newcommand{\eps}{\varepsilon}
\DeclareMathOperator{\pc}{PC}
\newcommand{\size}[1]{\lvert{#1}\rvert} 
\newcommand{\Part}{\mathcal{P}}
\newcommand{\chicr}{\chi_{\rm cr}}
\title{Perfect tilings of 3-graphs with the generalised triangle}
\author{Candida Bowtell\thanks{School of Mathematics, University of Birmingham, UK.
{\tt c.bowtell@bham.ac.uk}. Supported by Leverhulme Trust Early Career Fellowship ECF--2023--393.}, 
Amarja Kathapurkar\thanks{School of Mathematics, University of Birmingham, UK. {\tt amarja.kathapurkar@gmail.com}. AK gratefully acknowledges financial support from EPSRC Standard Grant EP/R034389/1.}, 
Natasha Morrison\thanks{Department of Mathematics and Statistics, University of Victoria, Canada. {\tt nmorrison@uvic.ca}. Research supported by NSERC Discovery Grant RGPIN-2021-02511 and NSERC Early Career Supplement DGECR-2021-00047.},
Richard Mycroft\thanks{School of Mathematics, University of Birmingham, UK.
{\tt r.mycroft@bham.ac.uk}. RM is grateful for financial support from EPSRC Standard Grant EP/R034389/1.}}
\date{}
\begin{document}
\maketitle
\begin{abstract}
We establish a best-possible minimum codegree condition for the existence of a perfect tiling of a $3$-uniform hypergraph $H$ with copies of the generalised triangle $T$, which is the 3-uniform hypergraph with five vertices $a, b, c, d, e$ and three edges $abc$, $abd$, $cde$. We also give an asymptotically-optimal minimum codegree condition for the rainbow version of the problem.
\end{abstract}

\section{Introduction}

Over recent years the problem of determining optimal minimum degree conditions which ensure the existence of a perfect tiling of a graph or hypergraph $H$ by copies of a fixed subgraph $F$ has attracted extensive attention from researchers in extremal graph theory. In the graph case, the question was essentially resolved through a series of works by (different subsets of) Alon, Corr\'adi, Hajnal, Koml\'os, K\"uhn, Osthus, S\'ark\"ozy, Szemer\'edi and Yuster~\cite{AY, CH, HSz, Komlos, KSSz, KO2}. 
On the other hand, a general solution in the hypergraph setting remains resolutely out of reach. In this manuscript we make an important step forward towards this goal by determining the optimal minimum codegree condition which guarantees a perfect tiling of a $3$-uniform hypergraph with copies of the generalised triangle. This is the $3$-uniform hypergraph $T$ on five vertices $a, b, c, d, e$ with edges $abc, abd$ and $cde$ (see Figure~1), which has been the subject of significant attention in hypergraph Tur\'an theory. To our knowledge, this is the first non-tripartite 3-uniform hypergraph with more than four vertices for which the optimal minimum codegree threshold for a perfect tiling is known even asymptotically. 

To state our main theorem formally, we make the following definitions. A $k$-uniform hypergraph $H$ (or $k$-graph for short) consists of a set of vertices $V(H)$ and a set of edges $E(H)$, where every edge is a set of $k$ distinct vertices. The degree of a set $f \subseteq V(H)$, denoted $\deg(f)$, is defined to be the number of edges $e \in E(H)$ with $f \subseteq e$, and the minimum codegree of $H$, denoted $\delta(H)$, is the minimum of $\deg(f)$ over all sets $f$ of $k-1$ vertices of $H$. Finally, for a fixed $k$-graph $F$, an $F$-tiling in $H$ is a set $\mathcal{F}$ of vertex-disjoint copies of $F$ in $H$, and $\mathcal{F}$ is perfect if every vertex of $H$ appears in some element of $\mathcal{F}$.

\begin{theorem}[Main result]\label{thm:main}
There exists $n_0 \in \mathbb{N}$ such that for every $n \geq n_0$ with $5 \mid n$, every $3$-uniform hypergraph $H$ on $n$ vertices with $\delta(H) \geq 2n/5$ admits a perfect $T$-tiling.
\end{theorem}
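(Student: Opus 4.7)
The plan is to apply the \emph{absorption method} of Rödl--Ruciński--Szemerédi together with a stability/extremal analysis. Fix a small constant $\eps > 0$ and identify the extremal hypergraphs witnessing the tightness of the bound $\delta(H) \geq 2n/5$. These presumably arise from a vertex partition reflecting both the ``chromatic'' structure of $T$ and the divisibility condition $5 \mid n$; the proof then splits into a non-extremal and an extremal case according to whether $H$ is $\eps$-close to such an extremal configuration.

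For the non-extremal case, I would first construct a small absorbing set $A \subseteq V(H)$ with $|A| = o(n)$ such that for every set $W \subseteq V(H) \setminus A$ with $|W| \leq \eps^2 n$ and $5 \mid |A \cup W|$, the hypergraph $H[A \cup W]$ admits a perfect $T$-tiling. The standard way to build $A$ is to show that for every vertex $v$ (and more generally for every small ``compatible'' set) there exist many \emph{$v$-absorbers}: 5-sets $S$ spanning a copy of $T$ in $H$ such that $S \cup \{v\}$ decomposes into two vertex-disjoint copies of $T$ and a single extra copy absorbs $v$. A random sample of such absorbers, together with a Chernoff-type concentration bound, yields $A$. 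Having set $A$ aside, I would apply a weak regularity lemma to $H \setminus A$ and combine it with a fractional $T$-tiling in the cluster hypergraph to cover all but at most $\eps^2 n$ vertices by copies of $T$; the absorbing set then swallows the leftover.

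For the extremal case, where $H$ is $\eps$-close to some extremal construction $H^*$, the absorber argument may break down because certain vertices lie in too few copies of $T$. Instead, one argues structurally: starting from a near-extremal partition of $V(H)$, one uses the codegree hypothesis to re-balance the parts via a bounded number of vertex exchanges so that each part has the correct residue modulo $5$, and then exhibits a perfect $T$-tiling by combining ``canonical'' tilings inside each part with a small number of boundary-crossing copies of $T$ that fix any residual discrepancies.

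I expect the main obstacle to be the extremal case: correctly identifying \emph{all} extremal configurations up to $\eps$-closeness, and then showing that the strict codegree hypothesis $\delta(H) \geq 2n/5$ rules out every obstruction within each such structure, typically requires a delicate case analysis driven by how vertices are distributed among parts. A secondary difficulty lies in the absorber construction: because $T$ is asymmetric, with its five vertices playing three genuinely distinct roles (the apex $e$, the ``link'' vertices $c,d$, and the ``base'' vertices $a,b$), one must carefully track which role each absorbed vertex plays in its copy of $T$, both when proving that absorbers exist in abundance and when bookkeeping in the almost-perfect tiling step.
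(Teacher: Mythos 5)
Your high-level roadmap (absorption plus an extremal/non-extremal dichotomy) matches the shape of the paper's argument, but the load-bearing pieces are either left unspecified or not quite right. First, the absorber description has a dimension mismatch: if $|S|=5$ then $S\cup\{v\}$ has six vertices and cannot decompose into two vertex-disjoint copies of $T$. The paper's absorbing lemma uses the Lo--Markstr\"om notion of $(\eta,r)$-linkedness (sets $S$ of size $5r-1$ with both $H[S\cup\{u\}]$ and $H[S\cup\{v\}]$ perfectly $T$-tileable), bootstrapped through the Han--Treglown and Han--Zang--Zhao lemmas rather than a single direct random-sampling step, precisely because single-step absorbers need not exist in abundance under the weak hypothesis $\delta(H)\geq n/3+o(n)$ that the absorbing lemma wants to run under. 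Relatedly, your remark that absorption ``may break down'' in the near-extremal case is off: the paper's absorbing lemma holds at this much lower threshold and is applied unconditionally; what fails near the extremal configuration is the almost-perfect tiling, not the absorption.

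The almost-perfect tiling step and the extremal case are where the real content is, and your proposal does not supply it. Asserting that weak regularity plus a fractional tiling of the cluster hypergraph finishes the non-extremal case begs the question of \emph{why} a $3$-graph with $\delta\geq(2/5-o(1))n$ that is not close to the extremal example admits a near-perfect fractional $T$-tiling; the paper proves exactly this via a case-dependent application of Farkas' lemma (choosing the witness copies of $T$ according to the intersection patterns of neighbourhoods of specific triples), which the authors highlight as the novel part of the argument. For the extremal case, ``re-balance the parts and combine canonical tilings with a few boundary-crossing copies of $T$'' is a slogan, not a mechanism. The paper's actual argument, after removing $O(\sqrt{\gamma}n)$ copies of $T$ to dispose of exceptional vertices, constructs an auxiliary $5$-partite $5$-graph on $A'\cup B'$ whose edges are the $5$-sets with three vertices in the dense side $A'$ and two in the sparse side $B'$ that support a copy of $T$, and finds a perfect matching there via a corollary of the Daykin--H\"aggkvist theorem; verifying the minimum degree of that auxiliary hypergraph (in particular that every vertex of $A'$ lies in $\Omega(n^4)$ such $5$-sets) is a delicate case analysis in which the exact bound $\delta(H)\geq 2n/5$ is used. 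None of this is recoverable from your sketch.
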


Clearly the condition that $5 \mid n$ is necessary for Theorem~\ref{thm:main}, since the number of vertices of $H$ covered by a $T$-tiling must be a multiple of five. Moreover the minimum degree condition of Theorem~\ref{thm:main} is best possible, as demonstrated by the following example: let $n$ be a multiple of 5, and let $A$ and $B$ be disjoint sets of size $2n/5-1$ and $3n/5+1$ respectively. Let $H_\mathrm{ext}$ be the $3$-graph with vertex set $A \cup B$ whose edges are all triples of vertices which intersect $A$, so $\delta(H) =2n/5-1$. However, since every edge of $H_\mathrm{ext}$ contains a vertex from $A$ and no vertex of $T$ lies in every edge of $T$, we find that every copy of $T$ in $H$ contains at least two vertices from $A$. Consequently every $T$-tiling in $H_\mathrm{ext}$ has size at most $|A|/2 < n/5$, meaning that $H_\mathrm{ext}$ has no perfect $T$-tiling. On the other hand, we believe that the requirement that $n$ is sufficiently large arises solely from the proof techniques, and speculate that in fact Theorem~\ref{thm:main} should hold for every $n \in \mathbb{N}$.
\begin{figure}
\begin{center}
\includegraphics{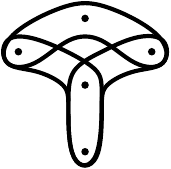}
\end{center}
\caption{The generalised triangle $3$-graph $T$.}
\end{figure}

\subsection{Perfect tilings in graphs and hypergraphs}
The general question of determining whether a graph or hypergraph $H$ contains some given spanning structure -- such as a perfect matching, a Hamilton cycle or a perfect $F$-tiling -- has been one of the most-studied topics in graph theory since the origins of the subject. For example, two classic results in this area are Tutte's theorem~\cite{Tutte} giving a characterisation of all graphs which contain a perfect matching, and Edmonds' algorithm~\cite{Edmonds} which returns a perfect matching in a graph (or reports that no such matching exists) in polynomial time. Unfortunately, we cannot expect similar results for perfect $F$-tilings in general, as Hell and Kirkpatrick~\cite{KH} demonstrated that the problem of determining whether a graph admits a perfect $F$-tiling is NP-hard for every fixed graph $F$ which has a~connected component with at least three vertices. Instead, the central focus of research on this question has been to establish sufficient conditions for the existence of a spanning structure within a graph or hypergraph $G$. Minimum degree conditions on $G$ are the most common parameter to study (since a single isolated vertex in $G$ proscribes the existence of a spanning structure in $G$ with no isolated vertices). For example, Dirac's celebrated theorem~\cite{Dirac} states that every graph $G$ on $n \geq 3$ vertices with minimum degree $\delta(G) \geq n/2$ contains a Hamilton cycle (a cycle which contains every vertex of $G$); for even $n$ this implies that the same condition ensures a perfect matching in $G$. 

This provokes the following general question: for a fixed graph $F$, what is the optimal minimum degree condition on a graph $G$ with $n$ vertices which ensures the existence of a perfect $F$-tiling in $G$? (We assume the necessary condition that the order of $F$ divides the order of $G$, and henceforth assume this without further comment). For $F=K_3$, Corr\'adi and Hajnal~\cite{CH} showed that the best possible sufficient condition is $\delta(G) \geq 2n/3$, before Hajn\'al and Szemer\'edi~\cite{HSz} showed more generally that for $F=K_r$ the condition 
$\delta(G) \geq \frac{r-1}{r}n$ is best possible. Turning to graphs other than cliques, Alon and Yuster~\cite{AY} showed that for every graph $F$ the condition $\delta(G) \geq \tfrac{\chi(F)-1}{\chi(F)}n + o(n)$ is a best-possible condition in terms of the chromatic number $\chi(F)$ up to the $o(n)$ error term. Koml\'os, S\'ark\"ozy and Szemer\'edi~\cite{KSSz} then improved this bound by replacing the $o(n)$ error term with an additive constant (which cannot be removed in general). In the other direction, Koml\'os~\cite{Komlos} introduced the \emph{critical chromatic number} $\chicr(F)$ of~$F$, and observed that for any~$F$ there are graphs $G$ on $n$ vertices with $\delta(G) \geq \tfrac{\chicr(F)-1}{\chicr(F)}n-1$ which do not contain a perfect $F$-tiling. Finally K\"uhn and Osthus~\cite{KO2} gave the correct minimum degree condition up to an additive constant $c$ for every graph $F$, which is either $\delta(G) = \tfrac{\chicr(F)-1}{\chicr(F)}n + c$ or $\delta(G) = \tfrac{\chi(F)-1}{\chi(F)}n + c$, according to the value of the \emph{greatest common divisor} of $F$. A survey by K\"uhn and Osthus~\cite{KO} gives a more detailed overview.

Together, these results are essentially a full resolution of the question in the graph setting. By contrast, results for the analogous question in $k$-graphs with $k \geq 3$ are limited for the most part to the special case where $F$ is a $k$-partite $k$-graph. For matchings (where $F$ is a single edge) R\"odl, Ruci\'nski and Szemer\'edi~\cite{RRS} established for large $n$ the precise best-possible minimum codegree condition for a perfect matching in $H$, which is $\delta(H) \geq n/2 - c$ where $c \in \{1/2, 1, 3/2, 2\}$ depends on the parities of quantities involving $k$ and $n$. This improved on successively-stronger previous approximate bounds given by R\"odl, Ruci\'nski and Szemer\'edi~\cite{RRS2}, K\"uhn and Osthus~\cite{ko_match} and R\"odl, Rucinski, Schacht and Szemer\'edi~\cite{rrss}. For all $k$-partite $k$-graphs $F$ Mycroft~\cite{Mycroft} gave a sufficient minimum codegree condition for the existence of a perfect $F$-tiling in a $k$-graph $H$ on $n$ vertices, and showed that this condition is aysmptotically best-possible for a large class of $k$-partite $k$-graphs including every complete $k$-partite $k$-graph. In the specific case where $F$ is a 3-graph with four vertices and two edges this had already been established by K\"uhn and Osthus~\cite{ko_loose}, and a best-possible exact bound for large $n$ was then given by Czygrinow, DeBiasio, and Nagle~\cite{cdn}. Gao, Han and Zhao~\cite{GHZ} did the same for the $k$-graph with $k+1$ vertices and two edges for each $k$, as well as giving a stronger bound on the sublinear error term in Mycroft's result, and giving a best-possible exact bound in the case where $F$ is a loose cycle (which is a particular form of $k$-partite $k$-graph).

Much less is known for non-$k$-partite $k$-graphs. For $F = K^3_4$ Lo and Markstr\"om~\cite{LM} showed that every $3$-graph $H$ on $n$ vertices with $\delta(H) \geq 3n/4 + o(n)$ admits a perfect $K^3_4$-tiling, which is asymptotically best-possible. Keevash and Mycroft~\cite{KMy} simultaneously and independently gave a best-possible exact minimum codegree condition for a perfect $K^3_4$-tiling in $H$ for large $n$, which is $\delta(H) \geq 3n/4 - 2$ if $8 \mid n$ and $\delta(H) \geq 3n/4 - 1$ otherwise. These works improved on weaker sufficient conditions given by Czygrinow and Nagle~\cite{cn} and Pikhurko~\cite{pikhurko}. Also, for $F$ being the unique $3$-graph $K^3_4-e$ on $4$ vertices with 3 edges, Lo and Markstr\"om~\cite{lm2} showed that $\delta(H) \geq n/2 + o(n)$ is an asymptotically best-possible minimum codegree condition for a perfect $K_4^3-e$ tiling in $H$, before Han, Lo, Treglown and Zhao~\cite{HLTZ} showed that for large $n$ the best-possible condition is precisely $\delta(H) \geq n/2 - 1$. Finally, Han, Lo and Sanhueza-Matamala~\cite{hlsm} gave an asymptotically best-possible minimum codegree condition for an $F$-tiling in a $k$-graph $H$ in the case when $k \geq 4$ is even and $F$ is a long tight cycle with certain divisibility properties. To our knowledge $K^3_4$ and $K^3_4-e$ are the only two non-tripartite $3$-graphs $F$ for which the optimal minimum codegree condition for a perfect $F$-tiling in a $k$-graph $H$ was known even asymptotically prior to the work presented in this manuscript.

For a wider overview of research on these topics we recommend the surveys of K\"uhn and Osthus~\cite{KO}, R\"odl and Ruci\'nski~\cite{RR} and Zhao~\cite{Zhao}.

\subsection{The generalised triangle}

The generalised triangle $3$-graph $T$ which is the focus of this manuscript is considered by many to be the most natural $3$-graph analogue of the (graph) triangle and consequently has been the focus of significant attention. We summarise these results here, categorised by the analogous results in the graph case.

\begin{itemize}
    \item Mantel's theorem~\cite{mantel} identifies the extremal number of the triangle graph $K_3$, showing that the maximum possible number of edges in a triangle-free graph on $n$ vertices is uniquely achieved by the complete balanced bipartite graph, which has $\lfloor n^2/4 \rfloor$ edges. For 3-graphs, Frankl and F\"uredi~\cite{FF} proved that for $n \geq 3000$ no $T$-free $3$-graph on $n$ vertices has more edges than the complete balanced tripartite graph, a result described by Keevash~\cite{keevash} as ``the first hypergraph Tur\'an theorem". Keevash and Mubayi~\cite{KMu} subsequently showed that the same holds for $n \geq 33$ and also that the complete balanced tripartite graph is the unique $T$-free $3$-graph with this many edges, before Goldwasser~\cite{Goldwasser} found the maximum number of edges in a $T$-free $3$-graph on $n$ vertices, including a characterisation of the extremal $3$-graphs, for every natural number $n$. These results built on previous work of Bollobas~\cite{Bollobas}, which answered a question of Katona, confirming that the complete balanced tripartite 3-graph is unique with the most edges among cancellative $3$-graphs, which are $3$-graphs which do not contain a copy of either $T$ or $K^3_4-e$. 
    \item The Erd\H{o}s-Kleitman-Rothschild~\cite{EKR} theorem states that almost all triangle-free graphs on $n$ vertices are bipartite. Balogh and Mubayi~\cite{BM} gave the natural $3$-graph analogue, showing that almost all $T$-free $3$-graphs on $n$ vertices are tripartite. Balogh, Butterfield, Hu and Lenz~\cite{BBHL} gave a sparse version of this result, namely that for $p \geq K \log n/n$ the binomial random $3$-graph $H$ on $n$ vertices has with high probability the property that every maximum $T$-free subgraph of $H$ is tripartite; they also showed that the same does not hold for $p = 0.1 \sqrt{\log n}/n$.
    \item Thomassen~\cite{Thomassen} showed that the chromatic threshold of triangle-free graphs is $1/3$, that is, that for every $c > 1/3$ there exists $k$ such that every triangle-free graph $G$ with minimum degree $\delta(G) \geq c|V(G)|$ has chromatic number $\chi(G) \leq k$ (and the same is not true if $1/3$ is replaced by any smaller real number). Balogh, Butterfield, Hu, Lenz and Mubayi~\cite{BBHLM} studied the analogous chromatic threshold of $T$, showing that its value lies between $6/49$ and $(\sqrt{41}-5)/8$.
    \item Andrasfai, Erd\H{o}s and S\'os~\cite{AES} proved that every triangle-free graph $G$ on $n$ vertices with minimum degree $\delta(G) \geq 2n/5$ is bipartite. Analogous statements for $3$-graphs were given by Liu, Ren and Wang~\cite{LRW}, who showed that every $T$-free $3$-graph $H$ on $n \geq 5000$ vertices with minimum vertex degree $\delta_1(H) \geq 4n^2/45$ is tripartite and (in a separate paper) that every $T$-free $3$-graph $H$ on $n \geq 5$ vertices with minimum positive codegree $\delta^+(H) \geq 2n/5$ is tripartite. (For a $3$-graph $H$, the minimum vertex degree $\delta_1(H)$ of $H$ is the minimum of $\deg(v)$ over all vertices $v \in V(H)$, whilst the minimum positive codegree $\delta^+(H)$ of $H$ is the smallest $\delta$ such that every pair $x, y \in V(H)$ with $\deg(xy) \geq 1$ has $\deg(xy) \geq \delta$.)
    \item It is trivial to show that if $G$ is a graph on $n$ vertices with $\delta(G) > n/2$ then each vertex of $G$ is covered by some triangle in $G$, and the complete bipartite graph demonstrates that the minimum degree bound of this statement cannot be improved. Gu and Wang~\cite{GW} observed the analogous covering codegree number of $T$ and gave bounds on the covering vertex degree number of $T$; the latter value was subsequently determined asymptotically by Ma, Hou and Yin~\cite{MHY}.
\end{itemize}

In a similar vein, it seems natural to regard the main result of this paper as a $3$-graph analogue of the Corr\'adi-Hajnal theorem~\cite{CH}, which states that every graph $G$ on $n$ vertices with minimum degree $\delta(G) \geq 2n/3$ admits a perfect triangle-tiling.

\subsection{Proof strategy}
Our proof of Theorem~\ref{thm:main} proceeds by distinguishing an `extremal' case, in which $H$ is close to the example $H_\mathrm{ext}$ presented after the statement of Theorem~\ref{thm:main}, and a `non-extremal' case covering all other possibilities. We formalise this distinction by the following definition. Note for this that the density of a $k$-graph $H$ on $n$ vertices is $d(H) := e(H)/\binom{n}{k}$, where $e(H)$ denotes the number of edges of $H$. Also, for a set $S \subseteq V(H)$ the subgraph of $H$ induced by $S$, denoted $H[S]$, is the $k$-graph with vertex set $S$ whose edges are all edges $e \in E(H)$ with $e \subseteq S$. 

\begin{definition}
Let $H$ be a $3$-graph on $n$ vertices. For $\gamma > 0$ we say that $H$ is $\gamma$-extremal if there exists a set $S \subseteq V(H)$ with $|S| = \lfloor3n/5\rfloor$ such that $H[S]$ has density at most $\gamma$. 
\end{definition}

The following lemma states that Theorem~\ref{thm:main} holds in the extremal case. The proof is in Section~\ref{sec:extremal}; the approach is specific to the generalised triangle $T$, with the key step being to find a perfect matching in an appropriate auxiliary graph. We note that the precise minimum degree condition $\delta(H) \geq 2n/5$ is needed for the proof of this lemma; all other components of the proof still hold under a slightly weaker degree condition.

\begin{lemma}[Extremal case] \label{lem:extremal}
There exist $\gamma > 0$ and $n_0 \in \mathbb{N}$ for which the following holds. Let $H$ be a $3$-graph on $n \geq n_0$ vertices with $\delta(H) \geq 2n/5$ and $5 \mid n$. If $H$ is $\gamma$-extremal, then $H$ contains a perfect $T$-tiling.
\end{lemma}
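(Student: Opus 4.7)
The plan is to exploit the $\gamma$-extremal structure to reduce the perfect $T$-tiling problem to a clean matching problem. Starting from the extremal set $S$, set $B_0 := S$ and $A_0 := V(H) \setminus S$. I would perform a careful cleanup using the codegree hypothesis to produce a partition $A \cup B = V(H)$ (possibly after setting aside a few pre-chosen vertex-disjoint copies of $T$ to remove) with $|A| = 2n/5$, $|B| = 3n/5$, and $H[B]$ edge-free. The point is to leverage the freedom in choosing $S$ together with the density of $H$ outside $S$ to absorb the few edges of $H[B_0]$ and correct the size imbalance introduced by moving bad vertices out of $B_0$.

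The key structural consequence of this setup is that every pair $\{u,v\} \subseteq B$ satisfies $N_H(\{u,v\}) = A$, i.e.\ $\{u,v\}$ is connected via $H$ to every vertex of $A$: indeed its $\ge 2n/5$ neighbours all lie in $A$ by independence of $B$, and $|A| = 2n/5$. Since the independence number of $T$ is $3$, a simple double count forces every copy of $T$ in any perfect tiling of $H[A \cup B]$ to use exactly $3$ vertices of $B$ and $2$ of $A$. I would restrict attention to copies of the type in which the $A$-pair plays the role of $\{c,d\} \subseteq V(T)$ and the $B$-triple plays the role of the independent triple $\{a,b,e\}$; then the two edges $abc$ and $abd$ of each such copy (each consisting of a $B$-pair together with a single $A$-vertex) are automatically present in $H$ by the above observation, and the only remaining condition per copy is that the $cde$-edge $\{x,y,w\}$ (consisting of the $A$-pair $\{x,y\}$ and the distinguished $B$-vertex $w$ playing the role of $e$) belongs to $E(H)$.

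The problem therefore reduces to partitioning $A$ into $n/5$ pairs $\{x_i,y_i\}$ and simultaneously choosing distinct vertices $w_1, \dots, w_{n/5} \in B$ with $\{x_i,y_i,w_i\} \in E(H)$; the remaining $2n/5$ vertices of $B$ can then be paired off arbitrarily to play the $\{a,b\}$-pair of each $T$-copy. I would phrase this simultaneous choice as a perfect matching in a suitable auxiliary graph and apply Hall's theorem (or a variant) to extract it. The main obstacle is verifying that this matching exists. The codegree hypothesis yields only the tight bound $|N_H(\{x,y\}) \cap B| \ge 2n/5 - (|A|-2) = 2$ for each $A$-pair $\{x,y\}$, which is exactly the point at which $\delta(H) \ge 2n/5$ cannot be weakened. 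Verifying the corresponding Hall-type condition thus requires ruling out configurations in which many $A$-pairs share the same tiny link in $B$, and handling such bottleneck configurations (for instance by using the extremal structure of $H$ to exclude them, or by locally rerouting a few copies through a different $2$-vertex cover of $T$ such as $\{a,e\}$ or $\{b,e\}$) will be the technical heart of the argument.
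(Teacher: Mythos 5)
The critical step in your plan is the ``careful cleanup'' that produces a partition $A \cup B$ with $|B| = 3n/5$ and $H[B]$ completely edge-free. This is not achievable in general, and the issue is structural, not technical. The hypothesis that $H$ is $\gamma$-extremal says $H[S]$ has \emph{density} at most $\gamma$, which still permits roughly $\gamma n^3$ edges, and these edges need not admit a small vertex cover. For example, $H[S]$ could be a sparse pseudorandom $3$-graph of density $\gamma$ on all $3n/5$ vertices of $S$; such a $3$-graph has independence number $O(\gamma^{-1/3})$, a constant independent of $n$. In that case no set of $3n/5 - o(n)$ vertices is independent in $H$, and setting aside $o(n)$ vertex-disjoint copies of $T$ changes nothing. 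Consequently your key structural observation --- that every $B$-pair has $N_H(\{u,v\}) = A$ --- and the subsequent count forcing each tile to use exactly three $B$-vertices both collapse. (Note in particular that your observation implicitly forces $\deg_H(uv) = 2n/5$ exactly for all $u,v \in B$; nothing in the hypotheses gives this.) The paper avoids this issue by \emph{not} insisting the large side be independent: it classifies pairs inside $S$ as ``good'' or ``bad'' according to their codegree into $S$, shows at most $O(\sqrt{\gamma}\,n^2)$ pairs are bad, and carries that weaker structure through the whole argument.

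A second, smaller, issue: even granting your setup, the simultaneous task of partitioning $A$ into pairs and choosing a distinct $w_i \in B$ with $x_i y_i w_i \in E(H)$ is a $3$-uniform hypergraph matching problem, not a bipartite one, so Hall's theorem does not apply directly; you would need either to fix a pairing of $A$ first (and then justify a Hall condition for the SDR problem, which the tight bound $|N(x_i y_i) \cap B| \geq 2$ certainly does not give by itself) or to invoke a genuine hypergraph matching tool. You explicitly defer exactly this step (``the technical heart''), so the hardest part of the argument is missing. For comparison, the paper's reduction lands on a $5$-partite $5$-graph $J$ in which each edge is a $5$-set supporting a copy of $T$ with the right split ($3$ from the large side, $2$ from the small side), and it completes the matching via the Daykin--H\"aggkvist theorem after verifying density and minimum-degree bounds for $J$; the codegree-tight step (giving good pairs at least $|X|$ neighbours on the large side) is used only to find the initial corrector tiles that restore the $3{:}2$ ratio.
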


It remains to prove Theorem~\ref{thm:main} in the non-extremal case. For this we use an absorbing approach, which has two main components. The first is the following absorbing lemma, which asserts the existence of a small $T$-tiling which can `absorb' into itself any small set of vertices (subject to the necessary condition that the number of vertices to be absorbed is a multiple of 5). We prove this lemma in Section~\ref{sec:absorbing}. Note that we don't require the assumption that $H$ is non-extremal for the absorbing lemma. Furthermore, the minimum codegree condition required for Lemma~\ref{lem:absorbing} is significantly weaker than the minimum codegree condition of Theorem~\ref{thm:main}.

\begin{lemma}[Absorbing lemma] \label{lem:absorbing} 
Suppose that $1/n \ll c \ll \alpha$ and let $H$ be a $3$-graph on $n$ vertices. If $\delta(H) \geq n/3+\alpha n$, then there exists a set $A \subseteq V(H)$ with $|A| \leq cn$ such that for every $S \subseteq V(H) \setminus A$ with $|S| \leq c^2n$ and $5 \mid |S|$ there exists a perfect $T$-tiling in $H[A \cup S]$.
\end{lemma}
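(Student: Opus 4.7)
The plan is to employ the standard Rödl--Ruciński--Szemerédi absorbing method, absorbing vertices in groups of five (as forced by $|V(T)|=5$ and $5 \mid |S|$). For a $5$-set $Q \subseteq V(H)$, call a set $X \subseteq V(H) \setminus Q$ of fixed constant size a \emph{$Q$-absorber} if both $H[X]$ and $H[X \cup Q]$ admit perfect $T$-tilings. If every $5$-set has $\Omega(n^{|X|})$ absorbers, then a standard sampling-plus-deletion argument produces a family $\mathcal{F}$ of pairwise disjoint absorbers with $|V(\mathcal{F})| \leq cn$ such that every $5$-set is absorbed by at least $c^2 n$ members of $\mathcal{F}$. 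Setting $A := V(\mathcal{F})$: given $S$ with $5 \mid |S| \leq c^2 n$, partition $S$ into $5$-sets $Q_1, \ldots, Q_r$ (with $r \leq c^2 n / 5$), greedily choose distinct $X_i \in \mathcal{F}$ with $X_i$ a $Q_i$-absorber (feasible since $r$ is far less than $c^2 n$), and tile $H[A \cup S]$ by combining the absorbing $T$-tiling of $H[X_i \cup Q_i]$ on each used $X_i$ with the default $T$-tiling of $H[X]$ on each unused $X \in \mathcal{F}$.

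As the concrete absorber I would take $|X| = 25$. Given $Q = \{v_1, \ldots, v_5\}$, let $X := \bigcup_{i=1}^{5} W_i \cup \{g_1, \ldots, g_5\}$, where each $W_i = \{a_i, b_i, c_i, d_i\}$ satisfies $a_i b_i c_i, a_i b_i d_i, c_i d_i v_i, c_i d_i g_i \in E(H)$, the set $\{g_1, \ldots, g_5\}$ spans a copy of $T$ in $H$, and all $25$ vertices are distinct and disjoint from $Q$. Both $W_i \cup \{v_i\}$ and $W_i \cup \{g_i\}$ are then copies of $T$ (with $v_i$ respectively $g_i$ playing the leaf vertex $e$), so $\{W_i \cup \{g_i\}\}_{i=1}^{5}$ is a perfect $T$-tiling of $X$ while $\{W_i \cup \{v_i\}\}_{i=1}^{5}$ together with the copy of $T$ on $\{g_1, \ldots, g_5\}$ is one for $X \cup Q$. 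The counting then reduces to three ingredients: (i) every vertex $v$ is the leaf-completer of $\Omega(n^4)$ cherries $(a,b,c,d)$ with $abc, abd, cdv \in E(H)$; (ii) each cherry admits $\Omega(n)$ buffer choices $g$ since $\deg(c_i d_i) \geq n/3 + \alpha n$; and (iii) a positive fraction of buffer tuples $(g_1, \ldots, g_5)$ span a copy of $T$, which follows from supersaturation---$\delta(H) \geq n/3 + \alpha n$ comfortably exceeds the tripartite $T$-free threshold $n/3$, giving $H$ at least $\Omega(n^5)$ copies of $T$.

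The main obstacle is the per-vertex bound in (i). Averaging over vertices yields only the on-average bound (since $H$ has $\Omega(n^5)$ copies of $T$ in total), and a direct per-pair inclusion-exclusion on link graphs is uninformative because the relevant ``double-codegree'' intersections $|L_c \cap L_d|$ may be small when $\delta(H)/n$ is close to $1/3$. I expect the resolution to rely on a density argument in the link graph of $v$, exploiting the global cherry count $\sum_{\{a,b\}} \binom{\deg(ab)}{2} = \Omega(n^4)$; alternatively, the absorber can be made role-flexible, allowing $v_i$ and $g_i$ to play any matched pair of roles in $T$, using only the more easily proved fact that each vertex lies in $\Omega(n^4)$ copies of $T$ in some role.
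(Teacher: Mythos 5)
Your proposal follows the direct R\"odl--Ruci\'nski--Szemer\'edi absorbing scheme, and you have correctly flagged that ingredient~(i), the per-vertex lower bound of $\Omega(n^4)$ cherries, is a genuine obstacle rather than a technicality. The obstacle is real: at codegree $n/3+\alpha n$ with $\alpha$ small, two link graphs $L_c,L_d$, each with minimum degree $n/3+\alpha n$, can have essentially disjoint edge sets (for instance link graphs realising disjoint distance ranges in a cyclic vertex ordering), so no per-vertex lower bound on the cherry count follows from the codegree hypothesis alone. Ingredient~(iii) has a parallel problem which you did not address: the sets $N(c_id_i)$ are only known to be large, and could all lie inside one class of a near-tripartite vertex partition, in which case the buffer vertices $(g_1,\ldots,g_5)$ would never span a copy of $T$.

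The paper proceeds quite differently and sidesteps both issues. It uses the Lo--Markstr\"om absorbing framework (Theorem~\ref{thm:lomarkstrom}), which is driven by the weaker notion of $(\eta,r)$-linked vertices: $u$ and $v$ are linked if many $(5r-1)$-sets $S$ have both $H[S\cup\{u\}]$ and $H[S\cup\{v\}]$ admitting perfect $T$-tilings. Crucially, Lemma~\ref{lem:15parts} only establishes that each vertex is $(\eta,1)$-linked to $\alpha n/2$ others --- a positive proportion, not all --- via the pigeonhole observation that among three codegree-sets $N(uw_1),N(uw_2),N(uw_3)$ (each of size at least $n/3+\alpha n$ inside $[n]$) two must intersect in at least $\alpha n$ vertices; this is exactly what the codegree $n/3+\alpha n$ buys. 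That weaker but verifiable property is then upgraded through the Han--Treglown partitioning lemma (Theorem~\ref{thm:HT}), which yields a partition of $V(H)$ into a bounded number of closed parts, and the Han--Zang--Zhao lattice-merging lemma (Theorem~\ref{thm:HZZ}), which merges the parts into a single closed set once Lemma~\ref{lem:2parts} shows that the unit-vector differences lie in the index-vector lattice. Your ``role-flexible'' idea gestures towards linkedness, but without the partition-and-merge machinery there is no route from ``each vertex is linked to a positive fraction of the others'' to ``every $5$-set has many absorbers,'' which is what your scheme needs.
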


The second component of the proof in the non-extremal case is the following tiling lemma, which states that a minimum codegree condition close to that of Theorem~\ref{thm:main} is sufficient to ensure a $T$-tiling in $H$ covering almost all of the vertices of $H$, under the additional assumption that $H$ is not extremal. We prove this lemma in Section~\ref{sec:tiling}.

\begin{lemma}[Tiling lemma] \label{lem:tiling}
Suppose that $1/n \ll \alpha, \eta \ll \gamma$, and that $H$ is a $3$-graph on $n$ vertices. If $\delta(H) \geq 2n/5 - \alpha n$,
then either $H$ admits a $T$-tiling covering at least $(1-\eta) n$ vertices of $H$, or $H$ is $\gamma$-extremal.
\end{lemma}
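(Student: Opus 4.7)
The plan is to apply the weak regularity lemma for $3$-graphs and reduce the problem to an analogous dichotomy in a reduced $3$-graph, which can then be established by a maximality/augmentation argument. Fix auxiliary constants $\eps \ll d \ll \alpha, \eta \ll \gamma' \ll \gamma$. Applying the weak regularity lemma to $H$ produces an equipartition $V(H) = V_0 \cup V_1 \cup \cdots \cup V_t$ with $|V_0| \leq \eps n$ and clusters of common size. Define the reduced $3$-graph $R$ on vertex set $[t]$ whose edges are those triples $ijk$ for which $(V_i,V_j,V_k)$ is $\eps$-regular of density at least $d$. A standard counting argument, with a bounded number of bad clusters absorbed into $V_0$, yields $\delta(R) \geq (2/5 - \alpha')t$ for some $\alpha' = \alpha'(\eps,d,\alpha)$ tending to $0$.

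The heart of the proof is then a dichotomy at the level of $R$: every $3$-graph $R$ on $t$ vertices with $\delta(R) \geq (2/5-\alpha')t$ either contains a $T$-tiling $\mathcal{T}_R$ covering at least $(1-\eta')t$ vertices, or contains a set $S_R \subseteq V(R)$ of size $\lfloor 3t/5 \rfloor$ with $R[S_R]$ of density at most $\gamma'$. I would prove this by taking a maximum $T$-tiling $\mathcal{T}_R$ in $R$ and analysing the set $U$ of its uncovered vertices. If $|U| \leq \eta' t$ we are done; otherwise, the minimum codegree condition produces many potential copies of $T$ meeting $U$ together with existing tiles of $\mathcal{T}_R$, and maximality prohibits any of these from being augmenting configurations (in which a single tile is replaced by two overlapping tiles, one of them covering strictly more of $U$). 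A careful case analysis exploiting the asymmetry of $T$ (the ``apex'' $e$ of degree one versus the shared pair $ab$ of the two base edges) shows that simultaneous failure of all augmentations forces almost all edges of $R$ to touch a specific set of roughly $2t/5$ vertices; its complement provides the required $\gamma'$-extremal set $S_R$.

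With the reduced dichotomy in hand, the final step lifts the conclusion back to $H$. In the non-extremal case each copy of $T$ in $\mathcal{T}_R$ uses $5$ distinct clusters whose three associated cluster triples are $\eps$-regular of density at least $d$; a standard greedy/slicing embedding then packs $(1-o(1))n/t$ vertex-disjoint copies of $T$ in $H$ on these clusters, and summing over $\mathcal{T}_R$ produces a $T$-tiling of $H$ covering all but at most $\eta n$ vertices. In the extremal case, take $S = \bigcup_{i \in S_R} V_i$ trimmed or enlarged by at most $\eps n$ vertices to achieve $|S| = \lfloor 3n/5 \rfloor$; then $e(H[S])$ is bounded by contributions from cluster triples inside $S_R$ (at most $\gamma' n^3$ edges), from sparse or irregular cluster triples (at most $O((d+\eps) n^3)$ edges), and from boundary and intra-cluster triples involving $V_0$ (at most $O(\eps n^3)$ edges). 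Choosing $\gamma', d, \eps \ll \gamma$ ensures $e(H[S]) \leq \gamma \binom{|S|}{3}$, witnessing that $H$ is $\gamma$-extremal.

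The principal obstacle will be the reduced-graph dichotomy, specifically the augmentation analysis that converts a large uncovered set under a maximum $T$-tiling into the rigid extremal structure of a near-independent $3t/5$-set. The generalised triangle admits several inequivalent swap configurations (because the vertices of $T$ play distinct roles), and proving that simultaneous obstruction of every swap forces the tight extremal structure is where the $2/5$ threshold is used most sharply and where the delicate combinatorial work of the lemma is concentrated.
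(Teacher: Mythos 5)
Your approach is genuinely different from the paper's. The paper does not use hypergraph regularity at all: it constructs a perfect \emph{fractional} $T$-tiling by applying Farkas' lemma directly to $H$, with a delicate case analysis (depending on the intersection pattern of the neighbourhoods of a few vertices among those with the lowest dual weight) to exhibit explicit copies of $T$ that are dominated coordinatewise by blocks of a partition of $V(H)$, contradicting the Farkas certificate unless $H$ is $\gamma$-extremal. It then bounds pair weights (via a small perturbation argument, re-applying the Farkas lemma in a form that avoids a sparse forbidden graph) and converts the fractional tiling to a near-perfect integral one by a Pippenger--Spencer-type corollary. Your regularity set-up and the lifting back to $H$ are both standard and would, in principle, reduce the task to the same dichotomy in the reduced $3$-graph $R$; this part is fine modulo routine calculations.

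The genuine gap, however, is exactly the step you flag as the ``principal obstacle'': you have not proved the reduced-graph dichotomy, and it is not a simplification of the original problem — it is the original problem restated for $R$ (same minimum codegree ratio, same conclusion). The regularity detour buys you nothing on this core step. More importantly, the proposed mechanism (take a maximum $T$-tiling, show that if many vertices are uncovered then simultaneous failure of all augmentation swaps forces a sparse $\lfloor 3t/5\rfloor$-set) is only a sketch and there is no evidence it succeeds at the sharp $2/5$ threshold for this particular $F$. Maximality/augmentation arguments of this shape are well-understood for matchings and some small cliques, but for the generalised triangle the several inequivalent vertex roles and swap configurations you mention make the case analysis genuinely hard, and the paper explicitly develops a new, \emph{different} method (structure-dependent application of Farkas' lemma) precisely because of this. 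Without actually carrying out the augmentation analysis, or at least identifying the finitely many obstructing swap patterns and showing their simultaneous failure implies near-vanishing density on a $3t/5$-set, the proof is incomplete at its only nontrivial point.
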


Our proof of the tiling lemma uses Farkas' lemma to find a perfect fractional matching in $H$, but our application of Farkas' lemma is dependent on the structure of $H$, with different cases according to the behaviour of the neighbourhoods of certain vertices. To our knowledge this approach is novel, and as discussed further in Section~\ref{sec:tiling} we hope that it could potentially be developed to prove further results on perfect tilings in hypergraphs.

Combining Lemmas~\ref{lem:extremal},~\ref{lem:absorbing} and~\ref{lem:tiling} yields Theorem~\ref{thm:main}.

\begin{proof}[Proof of Theorem~\ref{thm:main}]
Introduce constants with $1/n_0 \ll c, \alpha \ll \gamma \ll 1$. Let $n \geq n_0$ be divisible by 5, and let $H$ be a $3$-graph on $n$ vertices with $\delta(H) \geq 2n/5$. By Lemma~\ref{lem:absorbing} there exists a set $A \subseteq V(H)$ with $|A| \leq cn$ such that for every $S \subseteq V(H) \setminus A$ with $|S| \leq c^2n$ and $5 \mid |S|$ there exists a perfect $T$-tiling in $H[A \cup S]$. In particular $H[A]$ admits a perfect $T$-tiling, so we must have $5 \mid |A|$. Let $V' := V(H) \setminus A$, let $H' = H[V']$ and let $n' = |V'|$, so $n-cn \leq n' \leq n$ and $5 \mid n'$, and also $\delta(H') \geq 2n/5 - cn \geq 2n'/5 - \alpha n'$. By Lemma~\ref{lem:tiling} (applied with $c^2$ and $\gamma/2$ in place of $\eta$ and $\gamma$ respectively) it follows that either $H'$ admits a $T$-tiling covering at least $(1-c^2)n'$ vertices of $H'$, or $H'$ is $\gamma/2$-extremal.

Suppose first that $H'$ admits a $T$-tiling $\mathcal{T}$ which covers at least $(1-c^2)n'$ vertices of $H'$. Then $S := V' \setminus V(\mathcal{T})$ is a subset of $V(H) \setminus A$ with $|S| \leq c^2n' \leq c^2n$, and moreover $5 \mid |S|$ since both $n'$ and $|V(\mathcal{T})|$ are divisible by 5. By our choice of $A$ it follows that $H[A \cup S]$ admits a perfect $T$-tiling $\mathcal{T}'$, whereupon $\mathcal{T} \cup \mathcal{T}'$ is a perfect $T$-tiling in $H$, as required.

Now suppose instead that $H'$ is $\gamma/2$-extremal, meaning that there exists a set $S' \subseteq V(H')$ with $|S'| = 3n'/5$ such that $d(H[S']) \leq \gamma/2$.
By adding an arbitrary $3(n-n')/5 \leq cn$ vertices of $V(H) \setminus S'$ to $S'$ we obtain a set $S \subseteq V(H)$ with $|S| = 3n/5$ such that $d(H[S]) \leq ((\gamma/2) \binom{n'}{3} + cn \binom{n}{2})/\binom{n}{3} \leq \gamma$.
This set $S$ witnesses that $H$ is $\gamma$-extremal, and so $H$ admits a perfect $T$-tiling by Lemma~\ref{lem:extremal}, completing the proof.
\end{proof}

\subsection{Rainbow tilings}

We also consider minimum codegree conditions for rainbow $T$-tilings in families of 3-graphs. Let $V$ be a set of $n$ vertices, and let $\mathcal{H} = \{H_1, \dots, H_{3n/5}\}$ be a family of $3$-graphs with common vertex set~$V$. A {\it perfect rainbow $T$-tiling} in $\mathcal{H}$ is a perfect $T$-tiling in the multi-$3$-graph $\bigcup_{i \in [3n/5]} H_i$ consisting of edges $e_1, e_2, \ldots, e_{3n/5}$ such that $e_i \in E(H_i)$ for each $i \in [3n/5]$. In Section~\ref{sec:rainbow} we give a short argument combining Theorem~\ref{thm:main} with a recent theorem given by Lang~\cite{Lang} to derive the following corollary.

\begin{theorem}\label{thm:rainbow}
    For all $\eps > 0$ there exists $n_0$ such that the following holds for every $n \geq n_0$ with $5 \mid n$. Let $V$ be a set of $n$ vertices and let $\mathcal{H}=\{H_1, H_2, \ldots, H_{3n/5}\}$ be a collection of $3$-uniform hypergraphs with common vertex set $V$. If $\delta(H_i) \geq (\frac{2}{5}+\eps)n$ for each $i \in [\frac{3n}{5}]$, then $\mathcal{H}$ admits a perfect rainbow $T$-tiling.
\end{theorem}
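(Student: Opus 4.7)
The plan is to derive Theorem~\ref{thm:rainbow} as a quick consequence of Theorem~\ref{thm:main} by invoking a rainbow transference theorem of Lang~\cite{Lang}. Lang's framework is designed precisely to lift results about perfect $F$-tilings under minimum codegree conditions in a single $k$-graph to analogous results for perfect rainbow $F$-tilings in a family of $k$-graphs on a common vertex set, at the cost of only an $o(n)$ increase in the minimum codegree requirement.

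Concretely, I would first quote (or specialise) Lang's theorem in the setting where $F$ is a fixed $k$-graph for which the asymptotic minimum codegree threshold for a perfect $F$-tiling is known to be some $cn$. The conclusion of the specialisation is that for every $\eps>0$ and sufficiently large $n$ (with $v(F)\mid n$), every family of $e(F)\cdot n/v(F)$ $k$-graphs on a common vertex set of size $n$, each with minimum codegree at least $(c+\eps)n$, admits a perfect rainbow $F$-tiling. Taking $F=T$ (so that $v(F)=5$, $e(F)=3$ and the number of hypergraphs is exactly $3n/5$) and $c=2/5$, Theorem~\ref{thm:main} supplies the single-hypergraph threshold required by Lang's theorem. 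The rainbow conclusion obtained is then exactly the statement of Theorem~\ref{thm:rainbow}.

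The main technical point will be verifying that the hypotheses of Lang's theorem are met. Some transference results of this flavour demand not just the asymptotic threshold but a form of robustness — for example, stability of the threshold under random sparsification, or the availability of compatible absorbing and almost-perfect tiling lemmas. Fortunately, our proof of Theorem~\ref{thm:main} is built out of precisely such ingredients: the absorbing lemma (Lemma~\ref{lem:absorbing}), the tiling lemma (Lemma~\ref{lem:tiling}), and the extremal-case analysis (Lemma~\ref{lem:extremal}). Hence, if Lang's statement requires more than a black-box asymptotic threshold, these tools should supply the missing input directly. I expect the principal obstacle to be purely notational: matching the precise quantifiers of Lang's result to the generalised-triangle setting, and applying the theorem with a suitably smaller $\eps$ to absorb lower-order terms.
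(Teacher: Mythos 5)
Your proposed route — transfer Theorem~\ref{thm:main} to the rainbow setting via Lang's theorem — is indeed the one the paper takes, but your account of what Lang's theorem requires contains a genuine gap. Lang's result (his Theorem~5.11) is not simply ``the rainbow tiling threshold $\delta_{\mathrm{r}}$ equals the tiling threshold $\delta_{\mathrm{t}}$''; it states $\delta_{\mathrm{r}} = \max(\delta_{\mathrm{c}}, \delta_{\mathrm{t}})$, where $\delta_{\mathrm{c}}$ is a separate \emph{colour covering threshold}. To conclude $\delta_{\mathrm{r}} = 2/5$ from $\delta_{\mathrm{t}} = 2/5$ one must additionally show $\delta_{\mathrm{c}} \leq 2/5$. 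Your proposal never identifies this quantity, and the ``robustness'' concerns you flag instead (stability under sparsification, compatible absorbing/tiling lemmas) are not what is needed here; the absorbing and tiling lemmas already went into proving Theorem~\ref{thm:main}, and Lang's theorem takes the threshold $\delta_{\mathrm{t}}$ as a black box.

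The missing step is short but not purely notational: one must verify that for any two $3$-graphs $H_1, H_2$ on a common vertex set with (say) $\delta(H_1), \delta(H_2) \geq 3$, there is an injection $\phi : V(T) \to V$ and an edge $e \in E(T)$ with $\phi(e) \in E(H_1)$ and $\phi(e') \in E(H_2)$ for the other two edges $e'$ of $T$. This is done by picking $xyz \in E(H_1)$, then (using $\delta(H_2) \geq 3$) choosing $u$ with $xyu \in E(H_2)$ and then $v$ with $uvz \in E(H_2)$, all outside the vertices already used. The resulting copy of $T$ has edges $xyz$, $xyu$, $uvz$, giving $\delta_{\mathrm{c}} = 0$, after which $\delta_{\mathrm{r}} = \max(0, 2/5) = 2/5$ and Theorem~\ref{thm:rainbow} follows. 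You should add this colour-covering argument; without it the appeal to Lang's theorem is incomplete.
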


The minimum codegree bound of Theorem~\ref{thm:rainbow} is best possible up to the error term $\eps n$, as can be seen by taking $H_1 = H_2 = \dots = H_{3n/5} = H_\mathrm{ext}$, where $H_\mathrm{ext}$ was the 3-graph demonstrating the optimality of Theorem~\ref{thm:main}. We expect that with further work it should be possible to improve Theorem~\ref{thm:rainbow} by replacing the condition $\delta(H_i) \geq (\frac{2}{5}+\eps)n$ with the condition $\delta(H_i) \geq \frac{2}{5}n + c$ for a fixed constant~$c$. However, our argument cannot be directly extended in this way: the result proved by Lang on which we rely  deals only with asymptotic bounds on the minimum degree rather than exact bounds.

\subsection{Notation}

We use the following forms of notation in this manuscript, all of which are standard within this area. For $n \in \mathbb{N}$ we write $[n] = \{1, 2, \dots, n\}$, and for a set $X$ we write $\binom{X}{n}$ for the set of all subsets of $X$ of size $n$. We write $x \ll y$ to mean that for all $y > 0$ there exists $x_0 > 0$ such that for every $x > 0 $ with $x < x_0$ the subsequent statements hold, and define similar statements with more variables in the analogous way.

Let $H$ be a $k$-graph. For a set of vertices $f \subseteq V(H)$ the neighbourhood of $f$ is $N(f) := \{g \subseteq V(H) : f \cup g \in E(H)\}$. So $\deg(f) = |N(f)|$. Where necessary to avoid ambiguity we write $N_H(f)$ and $\deg_H(f)$ to indicate that we mean the neighbourhood or degree within the $k$-graph $H$. Also, if $H$ is a $3$-graph then for notational convenience we usually write $N(xy)$ and $\deg(xy)$ rather than $N(\{x, y\})$ and $\deg(\{x, y\})$. Observe that in this case $N(xy)$ is a set of vertices; we refer to these as neighbours of $x$ and $y$.

A $k$-graph $H$ is $k$-partite if there exists a partition of $V(H)$ into $k$ parts for which every edge of $H$ has precisely one vertex in each part. For $k=3$ we say tripartite rather than 3-partite.

We say that a set $S$ of 5 vertices in a 3-graph $H$ supports a copy of $T$ if $H[S]$ contains a copy of $T$.

\section{The extremal case} \label{sec:extremal}
The aim of this section is to prove Lemma~\ref{lem:extremal}, that is, that Theorem~\ref{thm:main} holds in the extremal case. Our argument will complete the desired $T$-tiling by finding a perfect matching in a dense auxiliary $5$-partite $5$-graph, using a corollary of a theorem of Daykin and H\"aggkvist~\cite{DH}.

\begin{theorem}[Daykin and H\"aggkvist~\cite{DH}] \label{5partitematching}
Let $J$ be a $k$-partite $k$-graph whose vertex classes each have size $n$. If every vertex of $J$ is contained in at least $(k-1)n^{k-1}/k$ edges of $J$, then $J$ admits a perfect matching.
\end{theorem}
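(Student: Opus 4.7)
The plan is to prove the theorem in two phases: first produce a nearly-perfect matching via an iterative construction, and then augment this to a perfect matching by exploiting the minimum degree condition. The base case $k = 2$ is elementary (a bipartite graph with both parts of size $n$ and minimum degree at least $n/2$ has a perfect matching by Hall's theorem), so the real content is in the regime $k \geq 3$.

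For the first phase, I would construct a matching $M$ greedily, or more efficiently via the nibble method of Pippenger and Spencer. Summing the degree condition over the vertices of $V_1$ shows that $J$ has at least $(k-1)n^k/k$ edges, and all pairwise codegrees are at most $n^{k-2} = o(n^{k-1})$, so the Pippenger--Spencer argument yields a matching covering all but $o(n)$ vertices of each class. This approximate matching step is standard and the degree condition is far stronger than needed.

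The real work lies in the second phase, which is also the main obstacle. Suppose $M$ is a maximum matching with $|M| = n - t < n$, and let $U_i = V_i \setminus V(M)$, so $|U_i| = t$ for each $i$. Since $M$ is maximum, no edge of $J$ lies entirely inside $U_1 \cup \cdots \cup U_k$. For each $u \in U_1$, the at least $(k-1)n^{k-1}/k$ edges through $u$ must therefore meet $V(M)$ in at least one coordinate outside $V_1$. A double-counting argument over $u \in U_1$ and edges $f \in M$ should then produce an augmenting configuration: a collection of edges $f_1, \ldots, f_\ell \in M$ and $e_0, e_1, \ldots, e_\ell \in E(J) \setminus M$, together with a consistent reassignment of endpoints, such that swapping the $f_j$'s out of $M$ and the $e_j$'s into it gives a matching of size $|M| + 1$. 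For $k = 2$ this is just an augmenting path, but for $k \geq 3$ the augmenting structure is no longer a path and its existence has to be extracted by a more delicate case analysis that is tightly pegged to the sharp threshold $(k-1)n^{k-1}/k$. Verifying the existence of such a configuration (equivalently, ruling out the extremal structures that would obstruct it) is what I expect to be the crux of the proof.
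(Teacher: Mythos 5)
The paper does not prove this theorem: it is quoted from Daykin and H\"aggkvist~\cite{DH} and used as a black box in the proof of Corollary~\ref{auxiliarymatching}, so there is no in-paper argument to compare against. Your proposal must therefore be assessed on its own terms, and as written it has a genuine gap. Phase~1 (nibble to an almost-perfect matching) is fine but entirely superfluous: if one is going to argue by taking a \emph{maximum} matching $M$ and showing $|M|<n$ leads to a contradiction, one does not need an almost-perfect matching to start from, and indeed the original Daykin--H\"aggkvist argument is a direct counting argument with no such first phase (it also predates the Pippenger--Spencer nibble by eight years). The entire content of the theorem --- and in particular the reason the sharp constant $(k-1)/k$ appears --- lives in what you call phase~2, and there you only describe what an augmenting configuration \emph{would} look like and state that you ``expect'' a double-count to produce one. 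You never carry out that double count, never identify the configuration, and never rule out the extremal obstructions. This is precisely the part you yourself flag as the crux, and leaving it at the level of ``should produce an augmenting configuration'' means the argument is not a proof.

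Concretely, the missing content is the following. Fix a maximum matching $M$ with $|M|=m<n$ and uncovered vertices $u_1\in V_1,\dots,u_k\in V_k$. Since $\{u_1,\dots,u_k\}$ is a non-edge, every edge through any $u_i$ meets $V(M)$. One then has to set up a count of pairs (edge through $u_i$, edge of $M$ it meets), spread over all $i\in[k]$, show that the total degree $\geq k\cdot(k-1)n^{k-1}/k=(k-1)n^{k-1}$ forces some $f\in M$ to be ``overloaded'' in a way that yields two disjoint edges $e\ni u_i$ and $e'\ni u_j$ with $e\cup e'$ meeting $V(M)$ only inside $f$, and verify that swapping $f$ for $\{e,e'\}$ increases the matching. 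The delicate part is ensuring $e$ and $e'$ are disjoint and avoid $M\setminus\{f\}$, and this is where the factor $(k-1)/k$ enters. None of this is in your write-up; without it, the proposal is an outline of the problem rather than a solution.
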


\begin{corollary} \label{auxiliarymatching}
Suppose that $1/n \ll \beta \ll 1$. Let $A$ and $B$ be disjoint sets of size $3n$ and $2n$ respectively, and let $J$ be a $5$-graph on~$5n$ vertices with vertex set $V := A \cup B$ in which every edge contains 3 vertices of $A$ and 2 vertices of $B$. If $e(J) \geq (1-\beta) \binom{3n}{3}\binom{2n}{2}$ and every vertex of $J$ is contained in at least $n^4/10^{8}$ edges of $J$, then $J$ admits a perfect matching.  
\end{corollary}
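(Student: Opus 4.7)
My plan is to reduce Corollary~\ref{auxiliarymatching} to Theorem~\ref{5partitematching} by taking a balanced partition of $A$ into three parts of size $n$ and $B$ into two parts of size $n$, then passing to the sub-5-graph consisting of edges with exactly one vertex in each part. The main obstacle is that a vertex of $J$ may have degree as small as $n^4/10^8$, whereas Theorem~\ref{5partitematching} demands degree at least $4n^4/5$ in the 5-partite graph. I will handle this by absorbing the (few) low-degree vertices into a small partial matching before partitioning, so that every remaining vertex has near-maximum degree when the theorem is applied.

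First I identify the set $L \subseteq V$ of ``low-degree'' vertices, defined as those $v$ with $\deg_J(v) < (1-\beta^{1/3}) M_v$, where $M_v := \binom{3n-1}{2}\binom{2n}{2}$ when $v \in A$ and $M_v := \binom{3n}{3}(2n-1)$ when $v \in B$. The edge bound $e(J) \geq (1-\beta)\binom{3n}{3}\binom{2n}{2}$, combined with the identity $3\binom{3n}{3} = 3n\binom{3n-1}{2}$ (and the analogous one for $B$), yields $|L| = O(\beta^{2/3} n)$ via a standard double-counting. Trivially, any two vertices of $V$ are contained in at most $O(n^3)$ edges of $J$, so a greedy procedure produces a matching $M_0 \subseteq E(J)$ with $|M_0| = |L|$ that covers each vertex of $L$ exactly once: at each step the at most $O(\beta^{2/3} n)$ forbidden vertices (those already used, plus the uncovered vertices of $L$) eliminate only $O(\beta^{2/3} n^4)$ edges at the current vertex $v \in L$, leaving plenty to choose from since $\deg_J(v) \geq n^4/10^8$. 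Because every edge of $J$ has 3 vertices in $A$ and 2 in $B$, the remaining sets $A' := A \setminus V(M_0)$ and $B' := B \setminus V(M_0)$ have sizes $3n'$ and $2n'$ respectively, where $n' := n - |M_0|$.

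Next I choose a balanced partition $A' = A_1 \sqcup A_2 \sqcup A_3$, $B' = B_1 \sqcup B_2$ with $|A_i| = |B_j| = n'$ uniformly at random, and take $J''$ to be the 5-partite sub-5-graph of $J[A' \cup B']$ whose edges meet each part exactly once. For any vertex $v \in A' \cup B'$, removing $V(M_0)$ destroys at most $5|M_0| \cdot O(n^3) = O(\beta^{2/3} n^4)$ of its edges, so $\deg_{J[A' \cup B']}(v) \geq (1-\beta^{1/4}) \cdot 9n'^4$. A direct probabilistic computation gives $\mathbb{E}[\deg_{J''}(v)] = (1+o(1))\deg_{J[A' \cup B']}(v)/9 \geq (1-\beta^{1/5}) n'^4$, comfortably above the threshold $4n'^4/5$ required by Theorem~\ref{5partitematching}. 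The $O(n^3)$ pair-codegree bound supplies an $O(n^3)$ Lipschitz constant for McDiarmid's inequality for random permutations, yielding a deviation probability of $\exp(-\Omega(n))$ per vertex; a union bound over the $5n'$ vertices then produces a partition in which every $v$ satisfies $\deg_{J''}(v) \geq 4n'^4/5$. Theorem~\ref{5partitematching} now yields a perfect matching $M_1$ of $J''$, and $M_0 \cup M_1$ is the desired perfect matching of $J$. The main technical obstacle I foresee is cleanly verifying the concentration step, since the balanced-partition constraint introduces dependencies; however, the standard McDiarmid-for-permutations bound should easily provide the required exponentially small failure probability.
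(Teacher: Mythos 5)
Your proof is correct, but the final step (passing to the 5-partite subgraph) is handled by a noticeably more involved argument than the paper uses. You take a uniformly random balanced partition of $A'$ into three parts and of $B'$ into two parts, compute the expected degree of each vertex in the resulting 5-partite subgraph, and then invoke McDiarmid's inequality for random permutations (with the $O(n^3)$ pair-codegree as the Lipschitz constant) plus a union bound to find a single good partition. That all works, but the paper sidesteps concentration entirely with the following observation: take an \emph{arbitrary} balanced partition, and compare $J_2$ (the 5-partite restriction of the surviving part $J_1$ of $J$) to the complete 5-partite 5-graph $G_2$ on the same parts. Since every 5-partite $5$-tuple is a fortiori a ``$3$ from $A$, $2$ from $B$'' $5$-tuple, the non-edges of $J_2$ at any vertex $v$ form a subset of the non-edges of $J_1$ at $v$, so $\deg_{G_2}(v) - \deg_{J_2}(v) \le \deg_{G_1}(v) - \deg_{J_1}(v) \le \beta^{1/3}n^4$. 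As $\deg_{G_2}(v) = n_2^4$ exactly, this immediately gives $\deg_{J_2}(v) \ge n_2^4 - \beta^{1/3}n^4 \ge 4n_2^4/5$ deterministically, with no randomness or concentration needed. Your route buys nothing extra here (the same Daykin--H\"aggkvist threshold is met either way), so the deterministic monotonicity argument is strictly simpler; it is worth internalising, as the trick of comparing missing-edge counts rather than degree counts is reusable. One minor wording point: you claim your greedy matching has $|M_0| = |L|$ and covers each $v \in L$ exactly once; that holds because you forbid other uncovered vertices of $L$ when choosing each edge, but the bound you actually use is $|V(M_0)| \le 5|L| = O(\beta^{2/3}n)$, so $|M_0| \le |L|$ would have sufficed.
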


\begin{proof}
Let $G$ be the $5$-graph with vertex set $V$ whose edges are all $5$-tuples in $V$ which contain $3$ vertices from $A$ and $2$ vertices from $B$. Observe that $e(G)=\binom{3n}{3}\binom{2n}{2}$ and $9n^4 - 9n^3 \leq \deg_G(v) \leq 9n^4 $ for every $v \in V$. Also $J \subseteq G$ is a spanning subgraph of $G$. Let $X \subseteq V$ consist of all vertices $x$ for which $\deg_G(x) - \deg_J(x) \geq \beta^{1/2}n^4$.
As $e(J) \geq (1-\beta) \binom{3n}{3}\binom{2n}{2}$, we must have $|X| \le 50\beta^{1/2}n$.
Observe that removing a collection of $m$ vertices from $J$ will decrease the degree of each remaining vertex by at most $27mn^3$. As $|X| \le 50\beta^{1/2}n$ and every vertex is in at least $n^4/10^{8}$ edges, we can thus greedily cover the vertices of $X$ 
by a matching $M_1$ in $J$ of size $|M_1| \leq |X|$.

Let $V_1 := V \setminus V(M_1)$, and define $J_1 := J[V_1]$ and $G_1 := G[V_1]$, so $J_1 \subseteq G_1$. 
For each $v \in V_1$ we then have 
$$\deg_{J_1}(v) \geq \deg_J(v)- 27|V(M_1)|n^3 \geq \deg_J(v) - 135 |X| n^3 \geq \deg_J(v) - 10000\beta^{1/2}n^4$$ 
and $\deg_{G_1}(v) \leq \deg_{G}(v)$; since $v \notin X$ it follows that $\deg_{G_1}(v) - \deg_{J_1}(v) \le \beta^{1/3}n^4$. Now 
arbitrarily partition $A'=A\setminus V_1$ into $3$ parts of equal size, and $B'=B\setminus V_1$ into two parts of equal size and let $J_2 \subseteq J_1$ and $G_2 \subseteq G_1$ be the subgraphs with vertex set~$V_1$ whose edge sets are all edges of $J_1$ and $G_1$ (respectively) which contain exactly one vertex in each of the $5$ parts. Observe that $J_2$ and $G_2$ are $5$-partite $5$-graphs with common vertex classes each of size $n_2=n-|M_1| \leq n$. Moreover for every $v \in V_1$ we have $\deg_{G_2}(v) - \deg_{J_2}(v) \leq \deg_{G_1}(v) - \deg_{J_1}(v) \leq \beta^{1/3}n^4$ and $\deg_{G_2}(v) = n_2^4$, so $\deg_{J_2}(v) \geq n_2^4-\beta^{1/3}n^4 \geq 4n_2^4/5$. Hence by Theorem~\ref{5partitematching} $J_2$ admits a perfect matching $M_2$. Together with $M_1$ this yields a perfect matching in~$J$.
\end{proof}

We can now give the proof of Lemma~\ref{lem:extremal}.

\begin{proof}[Proof of Lemma~\ref{lem:extremal}]
Suppose that $1/n \ll \gamma \ll \gamma'  \ll \beta \ll 1$ and that $5 \mid n$. Let $H$ be a 3-graph on $n$ vertices with $\delta(H) \geq 2n/5$ which is $\gamma$-extremal. This means there is a set $S \subseteq V(H)$ with $|S| = 3n/5$ such that $d(H[S]) \leq \gamma$. Say that a pair $x, y$ of vertices of $H$ is bad if $x, y \in S$ and $|N(xy) \cap S| > \sqrt{\gamma} n$, and good if $x, y \in S$ and $|N(xy) \cap S| \leq \sqrt{\gamma} n$ (pairs not contained in $S$ are neither good nor bad).
There are then at most $\sqrt{\gamma} n^2$ bad pairs, as otherwise we would obtain more than $1/3 \cdot (\sqrt \gamma n^2) \cdot (\sqrt{\gamma} n) \geq \gamma \binom{n}{3}$ edges in $H[S]$, contradicting our bound on the density of $H[S]$. Let~$X$ be the set of vertices in $V(H) \setminus S$ which are in fewer than $n^2/50$ edges with precisely 2 vertices in~$S$. Since there are at most $\sqrt{\gamma} n^2$ bad pairs, and every good pair has at least $2n/5-\sqrt{\gamma} n$ neighbours in $V(H) \setminus S$, the number of triples in $V(H)$ with precisely two vertices in $S$ which are not edges of $H$ is at most $(\sqrt{\gamma} n^2) \cdot (2n/5) + \binom{3n/5}{2} \cdot (\sqrt{\gamma} n) \leq \sqrt{\gamma} n^3$. Since each vertex in $X$ is in at least $\binom{3n/5}{2} - n^2/50 \geq n^2/10$ such triples, it follows that $|X| \leq 10\sqrt{\gamma} n$.  
Define $A = S \cup X$ and $B = V(H) \setminus (S \cup X)$ and note that $3n/5 \leq |A| = 3n/5+|X| \leq (3/5+10 \sqrt{\gamma})n$, and $(2/5-10 \sqrt{\gamma})n \leq |B|=2n/5-|X| \leq 2n/5$. Furthermore, each good pair is contained in $A$ and  has at least $2n/5-11\sqrt{\gamma} n$ neighbours in~$B$.

We claim that there exists a matching $M$ in $H[A]$ of size $|X|$ such that each edge of $M$ contains a good pair. To see this consider a largest matching $M$ in $H[A]$ with $|M| \leq |X|$ in which each edge contains a good pair. If $|M| < |X|$, then $|V(M)| \leq 3|X| - 3 \leq 30 \sqrt{\gamma} n$, so we may choose three pairwise-disjoint good pairs $p_1, p_2, p_3$ which do not intersect $V(M)$. By the minimum codegree condition, each of these pairs has at least $|X|$ neighbours in $A$. Each of these neighbours must be covered by $M$, as otherwise we would be able to extend $M$ to a larger matching in $H[A]$ in which each edge contains a good pair. Since $|V(M)| \leq 3|X| - 3$ but $\sum_{i \in [3]} |N(p_i) \cap A| \geq 3|X|$ it follows that there is an edge $e \in M$, vertices $x, y \in e$ and $i, j \in [3]$ with $x \neq y$ and $i \neq j$ for which $x \in N(p_i)$ and $y \in N(p_j)$.  This means we can replace $e$ in $M$ by $p_i \cup x$ and $p_j \cup y$ to obtain a larger matching in $H[A]$ in which each edge contains a good pair. We conclude that in fact $|M| = |X|$, giving the required matching.

We now use $M$ to construct a set of $|X|$ pairwise vertex-disjoint copies of $T$ in $H$ which each have four vertices in $A$ and one vertex in $B$. For this, consider an edge $uvw$ of $M$, where $uv$ is a good pair, and choose a vertex $z \in A \setminus V(M)$ which has not been included in a previously-chosen copy of $T$. If $zw$ has at least $\gamma'n$ neighbours in $B$, then $uv$ and $wz$ have at least $\gamma' n - 11 \sqrt{\gamma} n \geq 5\size{X} +|V(M)|$ common neighbours in $B$, so we can choose a previously-unused common neighbour $y$ of $uv$ and $zw$ in $B \setminus V(M)$ to obtain the desired copy of $T$ with edges $uvy$, $uvw$ and $zwy$. If not, then $zw$ has at least $2n/5 - \gamma' n$ neighbours in $A$, so we can choose neighbours $a, b$ of $zw$ in $A \setminus V(M)$ which have not previously been used and which form a good pair, meaning that we can choose a previously-unused neighbour $c$ of $ab$ in $B$ to form the desired copy of $T$ with edges $zwa, zwb$ and $abc$.

Having found $|X|$ such copies of $T$ in this way, deleting all these copies yields subsets $A' \subseteq A$ and $B' \subseteq B$ with $|A'| = |A| - 4|X| = 3n/5 - 3|X| = 3n_1/5$ and $|B'| = |B| - |X| = 2n/5 - 2|X| = 2n_1/5$, where $n_1 := n-5|X| = |A' \cup B'|$ (so in particular $n-50\sqrt{\gamma} n \leq n_1 \leq n$ and $5 \mid n_1$). Crucially the ratio of sizes of $A'$ and $B'$ is $|A'| : |B'| = 3 : 2$. Let $H' := H[A' \cup B']$, so $H'$ has $n_1$ vertices. We then have the following properties.
\begin{enumerate}[(i)]
\item At most $\gamma' n_1^2$ pairs of vertices in $A'$ are not good. Indeed, each such pair is either one of the at most $\sqrt{\gamma} n^2$ bad pairs or includes one of the at most $10\sqrt{\gamma} n$ vertices of $X$.\label{i0}
    \item Every good pair has at least $2n_1/5-\gamma' n_1$ neighbours in $B'$. This follows from the fact that each good pair had at least $2n/5 - 11\sqrt{\gamma} n$ neighbours in $B$. 
    \label{i1}
    \item $\delta(H') \geq 2n/5 - 5|X| \geq 2n_1/5 - \gamma' n_1$, since $5|X|$ vertices were deleting in forming $H'$. \label{i2}
    \item Every vertex in $B'$ is in at least $n_1^2/100$ edges with precisely two vertices in $A'$. Indeed, this follows from the definition of $X$ since each vertex in $B'$ is not in $X$ and so so formed an edge of $H$ with at least $n^2/50$ pairs in $A$. \label{i3}
\end{enumerate}
Let $J$ be the 5-graph on $V(H')$ whose edges are all sets $S \subseteq V(H')$ with $|S \cap A'| = 3$ and $|S \cap B'| = 2$ which support a copy $T_S$ of $T$ in $H'$. We will show that $J$ has the following properties. 
\begin{enumerate}[(P1)]
\item $e(J) \geq (1-\beta)\binom{3n_1/5}{3}\binom{2n_1/5}{2}$. \label{item_p1}
\item Every vertex of $H'$ is contained in at least $\frac{1}{10^{8}}(\frac{n_1}{5})^4$ edges of $J$. \label{item_p2}
\end{enumerate}
Using (P\ref{item_p1}) and (P\ref{item_p2}) we may apply Corollary~\ref{auxiliarymatching} to obtain a perfect matching in $J$. The copies $T_S$ of $T$ for each edge $S$ in this matching then form a perfect $T$-tiling in $H'$. Together with the $|X|$ copies of $T$ previously set aside in $H$, this yields a perfect $T$-tiling in $H$, completing the proof.

It remains to verify the properties (P\ref{item_p1}) and (P\ref{item_p2}). For (P\ref{item_p1}) observe that by (i) there are at most~$\gamma' n_1^3$ triples $\{x_1, x_2, x_3\}$ with $x_1, x_2, x_3 \in A'$ for which not all of $x_1x_2$, $x_2x_3$, $x_3x_1$ are good pairs. On the other hand, if $x_1x_2, x_2x_3, x_1x_3 \in \binom{A'}{2}$ are each good pairs, then by (ii) we have $|B' \setminus (N_H(x_1x_2) \cap N_H(x_2x_3) \cap N_H(x_1x_3))| \leq 3\gamma' n_1$. It follows that the number of 5-tuples $S = \{x_1, x_2, x_3, y_1, y_2\}$ with $x_1, x_2, x_3 \in A'$ and $y_1, y_2 \in B'$ for which we do not have $y_1, y_2 \in N_H(x_1x_2) \cap N_H(x_2x_3) \cap N_H(x_1x_3)$ is at most $\gamma' n_1^3 \cdot \binom{|B'|}{2} + \binom{|A'|}{3} \cdot 3 \gamma' n_1 \cdot |B'| \leq 4 \gamma' n_1^5$. For every other 5-tuple $S = \{x_1, x_2, x_3, y_1, y_2\}$ with $x_1, x_2, x_3 \in A'$ and $y_1, y_2 \in B'$ we have $S \in E(J)$ since $S$ supports a copy of $T$ with edges $x_1x_2y_1$, $x_1x_3y_1$ and $x_2x_3y_2$. We conclude that $e(J) \geq \binom{3n_1/5}{3}\binom{2n_1/5}{2} - 4 \gamma' n_1^5$, and~(P\ref{item_p1}) follows.

For (P\ref{item_p2}), first consider an arbitrary vertex $b \in B'$. By~(\ref{i3}) there are at least $n_1/100$ vertices $a \in A'$ with $|N(ab) \cap A'| \geq n_1/100$. For each such $a$ we can form a copy of $T$ in $H'$ which contains $b$ and precisely three vertices in $A'$ by next choosing a good pair $a_1, a_2 \in N(ab) \cap A'$, then choosing $b_1 \in N(a_1a_2) \cap B'$ to obtain a copy of $T$ with edges $aba_1, aba_2$ and $a_1a_2b_1$. By~(\ref{i0}) there are at least $\binom{n_1/100}{2} - \gamma' n_1^2 \geq n_1^2/10^5$ options for the good pair $\{a_1, a_2\}$, and because this pair is good, by~(\ref{i1}) there are at least $2n_1/5 - \gamma'n_1 \geq n_1/10$ options for $b_1$. So in total we obtain at least $n_1^4/10^8$ copies of $T$ in $H'$ which contain $b$ and have precisely three vertices in $A'$. Since each set of five vertices supports at most $5! \leq 5^4$ copies of $T$, it follows that (P\ref{item_p2}) holds for $b$.

Now fix $a \in A'$. Suppose first that for more than $n_1/100$ vertices $b \in B'$ we have $|N(ab) \cap A'| \geq n_1/100$. For each such $b$ we can form copies of $T$ in $H'$ which contain $a$ and precisely two vertices of $B'$ by exactly the same argument as we used for each $a$ in the previous paragraph. Just as there, it follows that (P\ref{item_p2}) holds for $a$. 

Next suppose instead that there are more than $n_1/5$ vertices $a' \in A'$ with $|N(aa') \cap A'| \leq n_1/10$. Let $A_1$ be the set of these vertices $a'$, so $|A_1| > n_1/5$. We may form a copy of $T$ in $H'$ containing $a$ and precisely two vertices of $B'$ as follows. First choose a good pair $a_1a_2 \in \binom{A_1}{2}$, then choose $b_1 \in N(aa_1) \cap N(aa_2) \cap B'$, then choose $b_2 \in N(a_1a_2) \cap B'$. This gives the desired copy of $T$ with edges $aa_1b_1, aa_2b_1$ and $a_1a_2b_2$. By~(\ref{i0}) there are at least $\binom{n_1/5}{2} - \gamma' n_1^2 \geq n_1^2/100$ options for the good pair $a_1a_2$, and by definition of $A_1$ and~(\ref{i2}) there are then at least $2n_1/5 - 2\cdot (n_1/10) - 2 \gamma' n_1 \geq n_1/10$ options for $b_1$. Finally, since $a_1a_2$ is good, by~(\ref{i1}) there are at least $n_1/5$ options for $b_2$, so in total we obtain at least $n_1^4/10^4$ copies of $T$ containing $a$ and precisely two vertices of $B$. Similarly as before it follows that (P\ref{item_p2}) holds for $a$.

The final possibility is that there are at least $2n_1/5$ vertices $a' \in A'$ with $|N(aa') \cap A'| > n_1/10$ and at least $2n_1/5-n_1/100 \geq n_1/5$ vertices $b \in B'$ with $|N(ab) \cap A'| < n_1/100$. Observe that by~(\ref{i2}) each such $b$ has $|N(ab) \cap B'| \geq n_1/5$. Choose any such $b \in B'$, and then choose $a',a_1 \in A'$ such that $a'a_1$ is a good pair and $aa'a_1 \in E(H')$, before finally choosing $b_1 \in N(ab) \cap N(a'a_1) \cap B'$. This gives a copy of $T$ in $H'$ with edges $aa'a_1$, $abb_1$, $a'a_1b_1$ which contains $a$ and has precisely two vertices in $B'$. There are at least $n_1/5$ options for $b$ and, by~(\ref{i0}), at least $(1/2) \cdot (2n_1/5) \cdot (n_1/10) - \gamma'n_1^2 \geq n_1^2/100$ options for the good pair $a'a_1$. Since $a'a_1$ is good,~(\ref{i1}) and our choice of $b$ imply that there are at least $n_1/10$ options for $b_1$. So in total we obtain at least $n_1/10^4$ copies of $T$ containing $a$ and precisely two vertices of $B$. Similarly as before it follows that (P\ref{item_p2}) holds for $a$, completing the proof.
\end{proof}

\section{Absorbing} \label{sec:absorbing}
In this section we prove the absorbing lemma (Lemma~\ref{lem:absorbing}). For this we follow the approach of Lo and Markstr\"om~\cite{LM}, beginning with the following definitions. Let~$H$ be a~$3$-graph on~$n$ vertices. Say that vertices~$u, v \in V(H)$ are \emph{$(\eta, r)$-linked in~$H$} if there exist at least~$\eta \binom{n}{5r-1}$ sets~$S \in \binom{V(H)}{5r-1}$ such that both~$H[S \cup \{u\}]$ and~$H[S \cup \{v\}]$ admit perfect~$T$-tilings. A set~$X \subseteq V(H)$ is \emph{$(\eta, r)$-closed in~$H$} if for every~$x, y \in X$ we have that~$x$ is~$(\eta, r)$-linked to~$y$ in $H$. We will obtain our absorbing set by applying the following special case of a theorem of Lo and Markstr\"om (the full version of their result gives an analogous statement for all~$k$-uniform hypergraphs, but we only state the result as it applies to the generalised triangle 3-graph which we consider in this manuscript).

\begin{theorem}[\cite{LM}, Lemma~1.1] \label{thm:lomarkstrom}
For each positive integer~$t$ and each~$\eta > 0$, there exists~$n_0$ for which the following statement holds. If~$H$ is a~$3$-graph on~$n \geq n_0$ vertices such that~$V(H)$ is~$(\eta, t)$-closed in~$H$, then there exists a set~$A \subseteq V(H)$ of size~$|A| \leq \eta^5 n/2560t$ for which there exists a perfect~$T$-tiling in~$H[A \cup S]$ for every set~$S \subseteq V \setminus A$ of size~$|S| \leq \eta^{10} n/ (10 \cdot 2^{18} \cdot t^2)$ with~$5 \mid |S|$.\end{theorem}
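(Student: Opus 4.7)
The plan is the standard probabilistic absorbing strategy: for each 5-set $X \subseteq V(H)$ I build many absorbers for $X$, randomly select a small family $\mathcal{F}$ of absorbers, and then use $\mathcal{F}$ to absorb any admissible leftover into $A := \bigcup \mathcal{F}$.

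First, I would build $25t$-absorbers for a given 5-set $X = \{x_1, \ldots, x_5\}$. Pick $y_1, \ldots, y_5 \in V(H) \setminus X$ so that $\{y_1, \ldots, y_5\}$ supports a copy of $T$, then for each $i \in [5]$ pick a linking $(5t-1)$-set $S_i$ for the pair $(x_i, y_i)$, with $X, \{y_1, \ldots, y_5\}, S_1, \ldots, S_5$ all pairwise disjoint. Setting $A_X := \{y_1, \ldots, y_5\} \cup \bigcup_{i=1}^{5} S_i$ (of size $25t$) gives an absorber: $H[A_X]$ is tiled by joining the perfect $T$-tilings of the sets $H[S_i \cup \{y_i\}]$, while $H[A_X \cup X]$ is tiled by joining the perfect $T$-tilings of the sets $H[S_i \cup \{x_i\}]$ with the copy of $T$ on $\{y_1, \ldots, y_5\}$. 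The $(\eta, t)$-closed hypothesis supplies at least $\eta \binom{n}{5t-1}$ linking sets for each pair $(x_i, y_i)$, and a standard double-counting of (vertex, copy of $T$, linking set)-triples forces $\Omega(\eta n^5)$ copies of $T$ in $H$; a routine calculation then yields at least $\eta_1 n^{25t}$ distinct $25t$-absorbers per 5-set $X$, for some $\eta_1 = \eta_1(\eta, t) > 0$.

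Next, I would sample the absorbing family. Form $\mathcal{F}_0$ by including each $25t$-set of $V(H)$ independently with probability $p := c_1 \eta_1 n^{1-25t}$, where $c_1 = c_1(\eta, t) > 0$ is small. Markov plus Chernoff plus a union bound over the $O(n^5)$ choices of 5-set shows that with positive probability $|\mathcal{F}_0| \leq 2c_1 n$, every 5-set has at least $\tfrac{1}{2} c_1 \eta_1 n$ absorbers in $\mathcal{F}_0$, and the number of intersecting pairs of members of $\mathcal{F}_0$ is at most a small constant fraction of $|\mathcal{F}_0|$. Removing one member of each intersecting pair gives a pairwise-disjoint $\mathcal{F} \subseteq \mathcal{F}_0$ in which each 5-set still has at least $\tfrac{1}{4} c_1 \eta_1 n$ absorbers; taking $c_1$ sufficiently small ensures $|A| \leq \eta^5 n/(2560t)$ for $A := \bigcup \mathcal{F}$.

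Finally, given $S \subseteq V(H) \setminus A$ with $5 \mid |S|$ and $|S| \leq \eta^{10} n/(10 \cdot 2^{18} t^2)$, partition $S$ into 5-tuples $X_1, \ldots, X_k$ and greedily assign each $X_j$ an unused $A_{X_j} \in \mathcal{F}$ that absorbs $X_j$; this succeeds because $k = |S|/5$ is smaller than the per-5-set absorber supply in $\mathcal{F}$. Replacing the $T$-tiling of each chosen $H[A_{X_j}]$ by the $T$-tiling of $H[A_{X_j} \cup X_j]$, and combining with the unchanged $T$-tilings of the unused members of $\mathcal{F}$, yields a perfect $T$-tiling of $H[A \cup S]$. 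The main obstacle in this plan is the quantitative bookkeeping in the counting and concentration steps: one must track $\eta_1$ in terms of $\eta$ and $t$, and control the alteration step sharply enough to land on the precise numerical bounds $\eta^5 n/(2560t)$ and $\eta^{10}n/(10 \cdot 2^{18} t^2)$ appearing in the statement.
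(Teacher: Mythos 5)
This statement is not proved in the paper at all: it is quoted verbatim from Lo and Markstr\"om \cite{LM} (their Lemma~1.1), so there is no in-paper argument to compare your attempt against. Your sketch is essentially a reconstruction of the Lo--Markstr\"om absorption proof (absorbers of size $25t$ built from a copy of $T$ on $\{y_1,\dots,y_5\}$ together with five linking $(5t-1)$-sets, whose two tilings of $H[A_X]$ and $H[A_X\cup X]$ you describe correctly, then random selection with deletion of intersecting pairs, then greedy absorption), and it is correct in outline; the only missing content is the explicit constant-tracking you flag, which is precisely the computation in \cite{LM} that yields the stated bounds $\eta^5 n/2560t$ and $\eta^{10}n/(10\cdot 2^{18}\cdot t^2)$.
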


To enable us to apply Theorem~\ref{thm:lomarkstrom}, we will show that the minimum degree condition of Lemma~\ref{lem:absorbing} on a $3$-graph $H$ is sufficient to ensure that~$V(H)$ is~$(\eta, t)$-closed in $H$ for appropriate~$\eta$ and~$t$. This is the following lemma, which the remaining part of this section is devoted to proving.

\begin{lemma} \label{lem:linked}
Suppose that~$1/n \ll \eta \ll 1/t \ll \alpha$, where~$t$ is an integer. If~$H$ is a 3-graph on~$n$ vertices with~$\delta(H) \geq n/3+\alpha n$, then~$V(H)$ is~$(\eta, t)$-closed in~$H$.
\end{lemma}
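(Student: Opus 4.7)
The plan is to establish $(\eta,t)$-closedness by combining a one-step linking result with a chaining (transitivity) argument and a diameter bound on the resulting ``well-linking'' graph.

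\textbf{Step 1 (single-step linking).} Call distinct $u, v \in V(H)$ \emph{well-linked} if there are at least $\eta_1 n^4$ sets $S \in \binom{V(H) \setminus \{u,v\}}{4}$ such that both $H[S \cup \{u\}]$ and $H[S \cup \{v\}]$ contain a copy of $T$, for an appropriately small $\eta_1 = \eta_1(\alpha) > 0$. I produce witnesses by counting $4$-tuples $(a,b,c,d)$ of distinct vertices in $V(H) \setminus \{u, v\}$ with $abc, abd \in E(H)$ and $u, v \in N(\{c, d\})$, so that both $(u,a,b,c,d)$ and $(v,a,b,c,d)$ realise copies of $T$ with $u$ and $v$ in the role of $e$. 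The double-count
\[
\sum_{v \in V(H) \setminus \{u\}} \#\bigl\{(c, d) : u, v \in N(\{c, d\})\bigr\} \;=\; \sum_{c \in V(H)} \sum_{d \in N(\{u,c\})} |N(\{c, d\})| \;\geq\; (n-1)(n/3 + \alpha n)^2,
\]
combined with the trivial upper bound $\deg_H(u) \leq \binom{n-1}{2}$ and a Markov-type estimate, shows that a linear number of vertices $v$ share $\Omega_\alpha(n^2)$ such pairs $\{c, d\}$ with $u$. Separately, a further double-counting using $\sum_{\{a,b\}} \binom{|N(\{a,b\})|}{2} \geq \binom{n}{2} \binom{n/3+\alpha n}{2}$ shows that the average over $\{c,d\}$ of the ``cherry count'' $\#\{\{a,b\} : c, d \in N(\{a, b\})\}$ is $\Omega_\alpha(n^2)$. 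A joint averaging argument then yields $\eta_1 n^4$ witnesses for each such $v$, so the set $W(u) := \{v : (u, v) \text{ is well-linked}\}$ satisfies $|W(u)| \geq c_1 n$ for every $u$, with $c_1 = c_1(\alpha) > 0$.

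\textbf{Step 2 (chaining, diameter, padding).} A chain $u = w_0, w_1, \dots, w_s = v$ of distinct vertices with each consecutive pair well-linked produces an $(\eta_1^s/(5s)!, s)$-linking of $u$ and $v$: one greedily chooses pairwise disjoint $4$-set witnesses $S_1, \dots, S_s$, also disjoint from $\{w_0, \dots, w_s\}$, and then $S := \bigcup_i S_i \cup \{w_1, \dots, w_{s-1}\}$, of size $5s - 1$, has the property that $H[S \cup \{u\}]$ is tiled by the copies of $T$ on $\{w_{i-1}\} \cup S_i$ while $H[S \cup \{v\}]$ is tiled by the copies on $\{w_i\} \cup S_i$. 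Since the well-linking graph on $V(H)$ has minimum degree at least $c_1 n$ by Step 1, a standard expansion argument gives that its diameter is at most some constant $d = O(1/c_1)$, which is at most $t$ since $1/t \ll \alpha$. If the shortest well-linking chain between $u$ and $v$ has length $s < t$, I extend the witness set by adjoining a $5(t - s)$-set $S''$ disjoint from $S \cup \{u, v\}$ that supports a perfect $T$-tiling in $H[S'']$; such sets exist in abundance (at least $\Omega_\alpha(n^{5(t-s)})$) since by Step 1 the number of $5$-sets supporting a copy of $T$ is $\Omega_\alpha(n^5)$, and a greedy iteration yields perfect $T$-tilings on larger sets. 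Setting $S \cup S''$ yields the desired $(5t - 1)$-set linking $u$ and $v$. Collecting the counts, every pair $u, v$ is $(\eta', t)$-linked for an $\eta' = \eta'(\alpha, t) > 0$, and by the hierarchy $\eta \ll 1/t \ll \alpha$ we have $\eta' \geq \eta$, establishing $(\eta, t)$-closedness of $V(H)$.

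\textbf{Main obstacle.} The delicate part is Step 1, specifically ensuring that sufficiently many $v$ satisfy both a large shared-pair count and a large aggregate cherry count. The cherry count for a fixed $\{c, d\}$ is the edge count in the intersection of the link graphs of $c$ and $d$, which can be adversarially small at the codegree threshold $n/3$. The $\alpha n$ slack must be exploited to guarantee a positive proportion of shared-codegree pairs also have many cherries; this may require a second-moment or structural analysis separating ``quasi-random'' behaviour of $H$ from ``near-tripartite'' behaviour (in the latter regime a direct count of $T$-copies in a different role assignment for $u, v$ provides the desired linking). The precise bookkeeping needed to combine the two averaging estimates into a uniform lower bound on $|W(u)|$ is the main technical challenge.
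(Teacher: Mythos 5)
Your Step~2 contains a genuine gap, and it is precisely where the real content of the paper's proof lies. You claim that because the well-linking graph $\Gamma$ on $V(H)$ has minimum degree at least $c_1 n$, ``a standard expansion argument gives that its diameter is at most some constant $d = O(1/c_1)$.'' This is false: a graph of minimum degree $\Omega(n)$ need not even be connected (e.g.\ a disjoint union of two cliques of size $n/2$), and the classical diameter bound $3n/(\delta+1)$ applies only within a single component. Since your chaining argument links $u$ to $v$ only along paths in $\Gamma$, vertices in distinct components of $\Gamma$ would never become $(\cdot, t)$-linked, and the conclusion would fail. Establishing that vertices in different ``closed'' regions of $V(H)$ can nonetheless be linked is exactly the nontrivial merging step.

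The paper handles this via the Han--Treglown partition lemma and the Han--Zang--Zhao merging lemma. Roughly: a stronger vertex-local statement is proved, namely not only that every vertex has $\Omega(n)$ linked partners (your Step~1 / Lemma~\ref{lem:15parts}(i)), but also that among any three vertices some pair is linked (Lemma~\ref{lem:15parts}(ii)) --- this second condition is what bounds the number of components/parts. Theorem~\ref{thm:HT} then produces a partition into at most two parts, each closed. To merge the two parts into a single closed set one cannot simply chain: instead one shows (Lemma~\ref{lem:2parts}) that for any balanced-enough bipartition $(A,B)$ of $V(H)$ there are index vectors $i, i'$ of $T$-copies with $i - i' = (1,-1)$, each attained $\Omega(n^5)$ times; this feeds the lattice condition of Theorem~\ref{thm:HZZ}, which performs the merge. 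Lemma~\ref{lem:2parts} is a genuine structural argument with a case analysis exploiting the $\alpha n$ slack above $n/3$, and there is no ``automatic'' substitute for it. Your Step~1 is broadly in the spirit of Lemma~\ref{lem:15parts}(i) (though you yourself flag the bookkeeping as incomplete and the paper's count via a fixed anchor vertex $z$ and triples in $N(uz)$ is cleaner), and your chaining/padding mechanics in Step~2 are standard and sound; what is missing is any analogue of Lemma~\ref{lem:15parts}(ii) plus a merging argument replacing the unjustified diameter claim.
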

Combining Theorem~\ref{thm:lomarkstrom} and Lemma~\ref{lem:linked} immediately gives a proof of Lemma~\ref{lem:absorbing}. 

\begin{proof}[Proof of Lemma~\ref{lem:absorbing}]
    Given constants~$\alpha$ and~$c$ and a~$3$-graph~$H$ on~$n$ vertices with~$\delta(H) \geq n/3 + \alpha n$ as in the statement of Lemma~\ref{lem:absorbing}, introduce a new integer~$t$ with~$c \ll 1/t \ll \alpha$ and set $\eta := \sqrt[5]{2560tc}$, so~$c = \eta^5/2560t$. We then have $1/n \ll \eta \ll 1/t \ll \alpha$, so   Lemma~\ref{lem:linked} implies that~$V(H)$ is~$(\eta, t)$-closed in~$H$. Theorem~\ref{thm:lomarkstrom} then yields a set~$A \subseteq V(H)$ of size~$|A| \leq cn$ with the desired absorbing property (to see this, observe that our choice of~$\eta$ ensures that~$c^2 \leq \eta^{10}/ (10 \cdot 2^{18} \cdot t^2)$).
\end{proof}

It remains to prove Lemma~\ref{lem:linked}. We use the following theorem of Han and Treglown~\cite{HT}, which gives conditions to ensure that the vertex set of a~$3$-graph~$H$ may be partitioned into not-too-small parts, each of which is closed in~$H$. Note that whilst our definitions of linkedness and closure are specific to the generalised triangle $3$-graph~$T$ considered in this manuscript, the full version of their theorem gives an analogous statement for all~$k$-uniform hypergraphs.

\begin{theorem}[\cite{HT}, Lemma 6.3] \label{thm:HT}
Suppose that~$1/n \ll \eta' \ll \eta, 1/c, \beta$, where~$c$ is an integer. If~$H$ is a~$3$-graph on~$n$ vertices in which 
\begin{enumerate}[(i)]
    \item for each~$u \in V(H)$ there are at least~$\beta n$ vertices~$v \in V(H)$ for which~$u$ and~$v$ are~$(\eta, 1)$-linked in~$H$, and 
    \item every set of~$c+1$ vertices of~$H$ contains two distinct vertices which are~$(\eta, 1)$-linked in~$H$,
\end{enumerate}
then there exists a partition~$\Part$ of~$V(H)$ into~$V_1, \dots, V_r$ with~$r \leq \min(c, 1/\beta)$ such that for every~$i \in [r]$ we have~$|V_i| \geq (\beta - \eta)n$ and~$V_i$ is~$(\eta', 2^{c-1})$-closed in~$H$.
\end{theorem}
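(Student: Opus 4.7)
The plan is to prove Theorem~\ref{thm:HT} by analyzing the $(\eta, 1)$-linkage graph $L$ on $V(H)$, whose edges are the $(\eta, 1)$-linked pairs. Hypothesis~(i) says $L$ has minimum degree at least $\beta n$ and~(ii) says its independence number is at most $c$. I will take the partition $V_1, \ldots, V_r$ to be the connected components of $L$.

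The size and number of parts follow quickly from the hypotheses: since each vertex has at least $\beta n$ $L$-neighbors, all in its own component, each $|V_i| \geq \beta n \geq (\beta - \eta) n$ and $r \leq 1/\beta$. If there were more than $c$ components, selecting one vertex from each yields an $L$-independent set of size $> c$ (two vertices in distinct components cannot be $L$-adjacent), contradicting~(ii); so $r \leq c$. Combined, $r \leq \min(c, 1/\beta)$.

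The main task is to show that each component $V_i$ is $(\eta', 2^{c-1})$-closed. I do this by iterating a doubling construction on the linkage relation. For $k = 1, 2, \ldots$, let $\mathcal L_k$ be the graph on $V(H)$ whose edges are the $(\eta_k, 2^{k-1})$-linked pairs, with $\eta_1 := \eta$. The doubling step: given any edge $uw$ in $\mathcal L_{k-1}$ with linking sets $\mathcal S_{uw}$ and any edge $wv$ in $\mathcal L_{k-1}$ with linking sets $\mathcal S_{wv}$, for each pair of disjoint $S_{uw} \in \mathcal S_{uw}$, $S_{wv} \in \mathcal S_{wv}$ avoiding $\{u, w, v\}$, the concatenation $S_{uw} \cup \{w\} \cup S_{wv}$ is a $(5 \cdot 2^{k-1} - 1)$-set linking $u, v$ as $(\eta_k, 2^{k-1})$-linked. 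Counting yields $\eta_k \geq c_0 \eta_{k-1}^2 / n^{O(1)}$ for some positive constant $c_0$. Combined with the observation that the $L$-diameter of each connected component is at most $2c - 1$ (on any shortest $L$-path, the even-indexed vertices form an $L$-independent set whose size is then bounded by (ii)), iterating this doubling allows any two vertices in the same component to become $(\eta_c, 2^{c-1})$-linked, possibly after an extension step that appends disjoint copies of $T$ in $V(H) \setminus (S \cup \{u, v\})$ — these $T$-copies exist in abundance since (i) already guarantees many copies of $T$ through every vertex of $H$.

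The main obstacle is reconciling the two bounds $2c - 1$ (from the naive path-concatenation argument) and $2^{c-1}$ (required by the conclusion). For $c \geq 4$ we have $2^{c-1} \geq 2c - 1$, and so the extension-by-$T$-copies step above directly completes the argument. For the small cases $c \in \{2, 3\}$, however, $2^{c-1} < 2c - 1$, so path concatenation alone is insufficient and one must exploit the flexibility in the definition of $(\eta, r)$-linkage — the two perfect $T$-tilings of $S \cup \{u\}$ and $S \cup \{v\}$ are \emph{not} required to share a common sub-tiling on $S$, so ``non-aligned'' configurations on a smaller linking set can link vertices that lie at $L$-distance exceeding $2^{c-1}$. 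Formalizing this non-aligned counting, together with the bookkeeping of the polynomial-in-$\eta$ decay of the $\eta_k$'s over $O(c)$ iterations (which remains positive for $n$ sufficiently large and yields an $\eta'$ satisfying $1/n \ll \eta' \ll \eta, 1/c, \beta$), constitutes the technical core of the proof.
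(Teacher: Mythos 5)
You should note first that the paper does not prove this statement at all: it is quoted (specialised to $3$-graphs and the generalised triangle $T$) from Han and Treglown~\cite{HT}, Lemma~6.3, and used as a black box. So your argument has to stand on its own, and as written it has a genuine flaw at its core. The easy parts are fine (each part has size at least $\beta n$, the bound $r \leq \min(c,1/\beta)$, the diameter bound $2c-1$ via shortest paths, and padding by vertex-disjoint copies of $T$), but the concatenation step does not deliver linkage with a \emph{constant} density parameter. If $u,w$ are $(\eta_1,r_1)$-linked and $w,v$ are $(\eta_2,r_2)$-linked, then every linking set you construct for the pair $u,v$ contains the one fixed vertex $w$ (and, after iterating along a path, all of its internal vertices), so the number of such sets is at most $\binom{n-1}{5(r_1+r_2)-2} = O\bigl(\tbinom{n}{5(r_1+r_2)-1}/n\bigr)$. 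Hence no bookkeeping can certify that $u,v$ are $(\eta_3, r_1+r_2)$-linked for a constant $\eta_3>0$; your own recursion ``$\eta_k \geq c_0\eta_{k-1}^2/n^{O(1)}$'' concedes exactly this, and ``remains positive for $n$ sufficiently large'' is not enough, because $(\eta',2^{c-1})$-closedness demands at least $\eta'\binom{n}{5\cdot 2^{c-1}-1}$ linking sets for a constant $\eta'$ fixed in the hierarchy before $n$. The known proofs (Lo--Markstr\"om~\cite{LM}, Han--Treglown~\cite{HT}) avoid this loss by only merging two vertices when they have $\Omega(n)$ \emph{common} linked vertices (equivalently, many common linking sets), so that the intermediate vertex ranges over linearly many choices and the missing factor of $n$ is recovered.

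The structural choice of partition is also not right: the connected components of the $(\eta,1)$-linkage graph need not be closed at all. Consider a $3$-graph whose linkage graph consists of two cliques of order roughly $n/2$ sharing a single vertex $w$; this is compatible with hypotheses (i) and (ii) (with $c=2$), every chain of $1$-linkages between the two sides passes through $w$, and nothing forces a vertex of one side to be $(\eta',r)$-linked to a vertex of the other side for any constant $\eta'$ and bounded $r$. The theorem survives because the partition is allowed to split such a component into two parts. This is also why the correct second parameter is $2^{c-1}$ rather than your diameter bound $2c-1$: in \cite{HT} the parts are built around a maximal family of pairwise non-linked vertices (of size at most $c$ by (ii)) and are merged at most $c-1$ times, with each merge -- justified by the common-neighbourhood counting above -- doubling the reachability parameter. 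Consequently the difficulty you flag for $c\in\{2,3\}$ and defer as ``the technical core'' is not a small-case nuisance but a symptom that the component-plus-path-concatenation strategy is not the mechanism behind the lemma.
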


Indeed, the next lemma shows that the minimum codegree condition~$\delta(H) \geq n/3+\alpha n$ on a 3-graph~$H$ ensures that the conditions of Theorem~\ref{thm:HT} are met for~$\beta = \alpha/2$ and~$c=2$.

\begin{lemma} \label{lem:15parts}
Suppose that~$1/n \ll \eta \ll \alpha$. If~$H$ is a 3-graph on~$n$ vertices with~$\delta(H) \geq n/3+\alpha n$, then 
\begin{enumerate}[(i)]
    \item every vertex of~$V(H)$ is~$(\eta, 1)$-linked to at least~$\alpha n/2$ vertices of~$V(H)$, and 
    \item every set of three vertices of~$H$ contains two distinct vertices which are~$(\eta, 1)$-linked in~$H$.
\end{enumerate}
\end{lemma}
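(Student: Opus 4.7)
The plan is to prove both parts via a single counting recipe. Given vertices $u, v \in V(H)$, let $G_{u, v}$ denote the graph on $V(H) \setminus \{u, v\}$ whose edge set is $\{\{c, d\} : ucd, vcd \in E(H)\}$, that is, the intersection of the link graphs of $u$ and $v$. The key structural observation is that any $4$-set $S = \{b, c, d, e\}$ with $\{b, c\}, \{b, d\} \in E(G_{u, v})$ and $cde \in E(H)$ witnesses the $(\eta, 1)$-linkage of $u$ and $v$: the edges $ubc$, $ubd$, $cde$ lie in $H$ and form a copy of $T$ on $S \cup \{u\}$ with $u$ in the role of vertex~$a$, and symmetrically $vbc$, $vbd$, $cde$ give a copy of $T$ on $S \cup \{v\}$. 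Each such $S$ arises from a cherry (i.e., a path of length two with specified centre) $(b; \{c, d\})$ in $G_{u, v}$ together with an extension $e \in N_H(cd) \setminus \{u, v, b, c, d\}$.

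The main quantitative estimate, which I would prove as a preliminary lemma, is that whenever $|E(G_{u, v})| \geq \lambda n^2$ for some constant $\lambda > 0$, the number of witnessing $4$-sets is $\Omega(\lambda^2 n^4)$. This follows from Cauchy--Schwarz applied to the degree sequence of $G_{u, v}$ (giving at least $2\lambda^2 n^3 - \lambda n^2$ cherries), the codegree bound (giving at least $|N_H(cd)| - O(1) \geq n/4$ valid extensions per cherry), and a bounded overcounting factor (at most $12$ ways to decompose a $4$-set into centre, leaves, and extension). Because $\eta \ll 1/t \ll \alpha$, one may choose $\eta$ small enough that this $\Omega(\lambda^2 n^4)$ bound exceeds $\eta \binom{n}{4}$ for each of the specific positive values of $\lambda$ that arise below.

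It then remains to verify the hypothesis $|E(G_{u, v})| \geq \lambda n^2$ in each part. For (ii), I would double-count to get $\sum_{1 \le i < j \le 3} |E(G_{u_i, u_j})| = \sum_{\{c, d\}} \binom{|N_H(cd) \cap \{u_1, u_2, u_3\}|}{2}$; since $\sum_i \deg_H(u_i) \geq 3(n-1)(n/3 + \alpha n)/2$, the average of $|N_H(cd) \cap \{u_1, u_2, u_3\}|$ over the $\binom{n}{2}$ pairs is at least $1 + 3\alpha$, and Jensen applied to the convex function $x \mapsto x(x-1)/2$ yields $\sum_{i<j}|E(G_{u_i, u_j})| = \Omega(\alpha n^2)$; pigeonholing over the three pairs produces some pair $(u_i, u_j)$ with $|E(G_{u_i, u_j})| = \Omega(\alpha n^2)$, and the main estimate (with $\lambda$ a constant multiple of $\alpha$) establishes their linkage. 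For (i), I would reverse the order of summation to get $\sum_{v \neq u}|E(G_{u, v})| = \sum_{\{c, d\} : ucd \in E}(|N_H(cd)| - 1) \geq \deg_H(u)(n/3 + \alpha n - 1) \geq n^3(1 + 3\alpha)^2/18$. If fewer than $\alpha n /2$ vertices $v$ had $|E(G_{u, v})| \geq n^2/18$, the sum would be at most $(\alpha n / 2)(n^2 / 2) + n \cdot n^2/18 = n^3(\alpha/4 + 1/18)$; but $(1 + 3\alpha)^2 - 1 = 6\alpha + 9\alpha^2 > 4.5\alpha$ for every $\alpha > 0$ gives $(1 + 3\alpha)^2/18 > 1/18 + \alpha/4$, a contradiction. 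Each of the $\geq \alpha n /2$ good vertices $v$ is then $(\eta, 1)$-linked to $u$ by the main estimate with $\lambda = 1/18$.

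The main technical obstacle I foresee is cleanly absorbing the various lower-order corrections -- the distinctness of $b, c, d, e$ from each other and from $\{u, v\}$ in the cherry-extension count, the $-1$ from excluding $u$ itself when $u \in N_H(cd)$ in part~(i), and the sub-leading terms in the convexity and codegree estimates -- without disturbing the leading $\Omega(n^4)$ bounds. These are routine but easy to mishandle; the generous hierarchy $\eta \ll 1/t \ll \alpha$ provides ample slack for all of them.
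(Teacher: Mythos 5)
Your proof is correct, and the underlying combinatorial witness is identical to the paper's: a $4$-set $\{b,c,d,e\}$ with $b$ adjacent to $c$ and $d$ in the shared link of $u$ and $v$, and $cde \in E(H)$, produces copies of $T$ in both $H[\{u,b,c,d,e\}]$ and $H[\{v,b,c,d,e\}]$. (In the paper's proof of part~(i), your $b,c,d,e,u,v$ play the roles of their $u,w,w',v,z,z'$.) What is genuinely different is the organization. You package the deduction ``$|E(G_{u,v})| \geq \lambda n^2 \Rightarrow \Omega(\lambda^2 n^4)$ linking $4$-sets'' into a single reusable lemma, using Cauchy--Schwarz to count cherries in $G_{u,v}$ and the codegree bound to extend them, and then both parts reduce to showing that the relevant $G_{u,v}$ is dense, each time via a clean double count of $\sum |E(G_{u,v})|$. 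The paper instead argues locally: for~(i) it fixes $z$, chooses $u$ and then $w,w' \in N(uz)$ with $|N(uw)\cap N(uw')| \geq \alpha n$, and derives a contradiction by double-counting pairs $(S,z')$ under the assumption that fewer than $\alpha n/2$ vertices $z'$ are good; for~(ii) it averages over triples to find one pair $\{w_i,w_j\}$ for which at least $n/4$ vertices $u$ have $|N(uw_i)\cap N(uw_j)| \geq \alpha n$. Your version is more modular and avoids repeating similar reasoning in the two parts, at the cost of a little extra setup; both work under the hierarchy $1/n \ll \eta \ll \alpha$. One small slip: you quote the hierarchy as ``$\eta \ll 1/t \ll \alpha$'', importing the parameter $t$ from the enclosing Lemma~\ref{lem:linked}; the hierarchy in Lemma~\ref{lem:15parts} itself contains no $t$, and you only need $\eta$ below $\min(c_1, c_2\alpha^2)$ for absolute constants $c_1,c_2$, which $1/n \ll \eta \ll \alpha$ provides. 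Your closing list of lower-order corrections is the right checklist and the slack you have does absorb them.
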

\begin{proof}
For (i), fix~$z \in V(H)$, and for each~$u \in V(H) \setminus \{z\}$ set~$W(u) := N(uz)$.
Observe that for each triple of distinct vertices~$w_1,w_2,w_3 \in W(u)$ there exist $i, j \in [3]$ with $i \neq j$ for which~$|N(uw_i) \cap N(uw_j)| \geq \alpha n$ (this follows from the minimum codegree condition by averaging). Since each pair of vertices is contained in at most $n$ triples, and $|W(u)| \geq n/3$, it follows that there are at least~$\binom{|W(u)|}{3}/n \geq n^2/175$ distinct pairs~$w, w' \in W(u)$ with $w \neq w'$ for which~$|N(uw)\cap N(uw')| \geq \alpha n$.
For each such $u, w$ and $w'$, and for each~$z' \in N(uw)\cap N(uw')$ and~$v \in N(ww') \setminus \{z, z', u\}$, we have that~$\{z,u,w,w',v\}$ and~$\{z',u,w,w',v\}$ both support copies of~$T$.

Call a vertex $z'$ \emph{good} if there are at least $\eta n^4$ sets $S$ for which both $S \cup \{z\}$ and $S \cup \{z'\}$ support copies of $T$, 
and call $z'$ \emph{bad} otherwise. Suppose for a contradiction that there are fewer than $\alpha n/2$ good~$z'$. Let~$X$ be the collection of pairs $(S,z')$ in which $z'$ is bad and $S \cup \{z\}$ and $S \cup \{z'\}$ both support copies of~$T$. On the one hand, picking $u,w,w',v$ as above yields at least~$(n-1) \cdot (n^2/175) \cdot (n/3) \geq n^4/600$ choices of~$S$ for which~$S \cup \{z\}$ supports a copy of~$T$ and for which there are at least $\alpha n$ vertices $z'$ such that $S \cup \{z'\}$ supports a copy of $T$. At least $\alpha n/2$ of these vertices must be bad, so we have $|X| \ge \alpha n^5/1200$. On the other hand, there are at most~$n$ bad vertices~$z'$, which by definition each have at most~$\eta n^4$ sets~$S$ with $(S,z') \in X$, so~$|X| \leq \eta n^5$. This implies that~$\eta \geq \alpha/1200$, contradicting our choice of $\eta$ and $\alpha$. So there must be at least $\alpha n/2$ good vertices, meaning that~$z$ is~$(\eta, 1)$-linked to at least~$\alpha n/2$ vertices. This completes the proof of (i).

For (ii), let $w_1,w_2,w_3$ be three distinct vertices of $H$. As above, for each $u \notin \{w_1,w_2,w_3\}$ we have $|N(uw_i)\cap N(uw_j)| \ge \alpha n$ for some $i, j \in \{1,2,3\}$ with $i \neq j$. So, without loss of generality, we may assume there is a set $U \subseteq V(H)\setminus \{w_1,w_2,w_3\}$ of cardinality at least $n/4$ such that $|N(uw_1)\cap N(uw_2)| \ge \alpha n$ for every $u \in U$. Now observe that for each $u \in U$, each $y,y' \in N(uw_1)\cap N(uw_2)$, and each $x \in N(yy') \setminus \{u, w_1, w_2\}$ both $\{w_1,u,y,y',x\}$ and $\{w_2,u,y,y',x\}$ support copies of $T$. Noting that there are at least $n/4$ choices for $u$, at least $\alpha n/2$ choices for each of $y$ and $y'$ and at least $n/3$ choices for $x$ gives (ii).
\end{proof}

We also use the following theorem of Han, Zang and Zhao~\cite{HZZ}, which gives a condition on a partition of~$V(H)$ into closed parts which ensures that actually~$V(H)$ itself is closed. As before, whilst our definition of closure only applies to the generalised triangle~$3$-graph~$T$ considered in this manuscript, the full version of the theorem gives an analogous statement for all~$k$-uniform hypergraphs. The theorem makes use of the following definitions. 
 Let~$\mathcal{P} = (V_1, V_2, \dots, V_r)$ be a partition of~$V(H)$ equipped with an order on its parts. For a set~$S \subseteq V(H)$ the index vector~$i(S)$ of~$S$ (with respect to~$\mathcal{P}$) is the vector~$(|S \cap V_1|, \dots, |S \cap V_r|) \in \mathbb{Z}^r$. Write~$I_\mathcal{P}(H)$ for the set of all vectors~$i \in \mathbb{Z}^r$ whose entries are all non-negative and sum to five; these are the possible index vectors for copies of $T$ in $H$. Define~$I^\mu_\mathcal{P}(H)$ to be the set of all $i \in I_\mathcal{P}(H)$ for which there exist at least~$\mu \binom{n}{5}$ copies of~$T$ in~$H$ with index vector~$i$ (with respect to~$\mathcal{P}$), and let $L^\mu_\mathcal{P}(H)$ be the lattice generated by $I^\mu_\mathcal{P}(H)$, that is, the additive subgroup of $\mathbb{Z}^r$ which contains $I^\mu_\Part(H)$ and which is minimal with respect to inclusion among all such subgroups. We write~$u_j$ to denote the unit vector from~$\mathbb{Z}^r$ with a~$1$ in position~$j$.

\begin{theorem}[\cite{HZZ}, Lemma 3.9] \label{thm:HZZ}
     For all~$\eta, \mu, \eps$ and~$t$ there exist~$\eta', t'$ and~$n_0$ for which the following holds. Let~$H$ be a 3-graph on~$n \geq n_0$ vertices, and let~$\Part$ be a partition of~$V(H)$ into parts~$V_1, \dots, V_r$ where for each~$j \in [r]$ the part~$V_j$ has size~$|V_j| \geq \eps n$ and is~$(\eta, t)$-closed in~$H$. If~$u_\ell - u_{\ell'} \in L_\Part^{\mu}$ for every~$\ell, \ell' \in [r]$, then~$V(H)$ is~$(\eta', t')$-closed in~$H$.  
 \end{theorem}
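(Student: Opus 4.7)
The plan is to combine the Han–Treglown partition theorem (Theorem~\ref{thm:HT}) with the Han–Zang–Zhao lattice-closure theorem (Theorem~\ref{thm:HZZ}), using Lemma~\ref{lem:15parts} to supply the linkedness hypotheses required by Theorem~\ref{thm:HT}. First I would introduce auxiliary constants $1/n \ll \eta \ll \eta_1 \ll \eta_2 \ll 1/t \ll \alpha$ and apply Lemma~\ref{lem:15parts} with parameter $\eta_2$ to verify conditions (i) and (ii) of Theorem~\ref{thm:HT} with $c = 2$ and $\beta = \alpha/2$. Invoking that theorem (with its $\eta'$ set to $\eta_1$) yields a partition $\Part = (V_1, \ldots, V_r)$ of $V(H)$ into $r \leq 2$ parts, each of size at least $(\alpha/2 - \eta_1)n$ and each $(\eta_1, 2)$-closed in~$H$.

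If $r = 1$, then $V(H)$ is itself $(\eta_1, 2)$-closed, and I would upgrade this to $(\eta, t)$-closure by a standard padding argument. For each pair $u, v \in V(H)$, each of the at least $\eta_1 \binom{n}{9}$ witness $9$-sets $S_0$ extends to a $(5t-1)$-set $S$ by appending $t-2$ pairwise vertex-disjoint copies of $T$ chosen from $H - (S_0 \cup \{u,v\})$. The codegree hypothesis $\delta(H) \geq n/3 + \alpha n$ guarantees $\Omega(n^5)$ copies of $T$ in any induced sub-$3$-graph of order $(1-o(1))n$, so greedily there are $\Omega(n^{5(t-2)})$ valid extensions per~$S_0$; dividing by the bounded over-counting factor $\binom{5t-1}{9}$ shows that $(u,v)$ admits at least $\eta \binom{n}{5t-1}$ valid $(5t-1)$-sets.

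If $r = 2$, I would apply Theorem~\ref{thm:HZZ} to $\Part = (V_1, V_2)$; its nontrivial precondition is the lattice membership $u_1 - u_2 \in L^\mu_\Part(H)$ for some constant $\mu > 0$, and verifying this is the main obstacle of the proof. I would establish it via a swap argument exhibiting two popular index vectors whose difference equals $u_1 - u_2 = (1,-1)$. There are $\Omega(n^4)$ ordered quadruples $(a,b,c,d)$ of distinct vertices with $abc, abd \in E(H)$, since $|N(ab)| \geq n/3 + \alpha n$ for every pair $\{a,b\}$, and each such quadruple extends to a copy of $T$ via any $e \in N(cd) \setminus \{a,b\}$. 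If I can show that for $\Omega(n^4)$ of these quadruples both $|N(cd) \cap V_1| \geq \gamma n$ and $|N(cd) \cap V_2| \geq \gamma n$ for some $\gamma > 0$ depending only on~$\alpha$, then selecting $e$ from $N(cd) \cap V_1$ or from $N(cd) \cap V_2$ produces $\Omega(n^5)$ copies of $T$ with each of two index vectors whose difference is exactly $(1,-1)$, placing both in $I^\mu_\Part(H)$ for a suitable $\mu > 0$.

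The hard part is proving that many of the quadruples yield pairs $(c,d)$ whose codegree neighbourhood $N(cd)$ splits substantially across both parts. The averaging identity $\sum_{\{c,d\}} |N(cd) \cap V_j| = \sum_{v \in V_j} \deg_1(v) \geq |V_j|(n-1)(n/3 + \alpha n)/2 = \Omega(|V_j| n^2)$ makes the average of $|N(cd) \cap V_j|$ over all pairs $\Omega(n)$, so a positive fraction of pairs satisfy $|N(cd) \cap V_j| \geq \gamma n$ for each $j$ individually, but the intersection of the two classes does not follow directly from averaging. To force the intersection to be large, my plan is to partition pairs by which part dominates $N(cd)$ and to double-count edges across both parts simultaneously, observing that a large class of pairs with $N(cd)$ concentrated in a single part would force a highly biased edge distribution incompatible with the symmetric codegree lower bound. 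As a backup, I would combine multiple swap arguments by swapping each of the degree-two vertices $a, b, c, d$ of $T$ in turn (rather than only the degree-one vertex $e$), obtaining several pairs of popular index vectors whose differences collectively generate $u_1 - u_2$ in the lattice.
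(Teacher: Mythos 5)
The statement you were asked to prove is the Han--Zang--Zhao result itself (Lemma~3.9 of~\cite{HZZ}), which the paper imports as a black box: for an \emph{arbitrary} $3$-graph $H$ --- with no minimum codegree hypothesis --- equipped with a partition $\Part$ into parts of size at least $\eps n$, each $(\eta,t)$-closed, and satisfying $u_\ell - u_{\ell'} \in L^\mu_\Part(H)$ for all $\ell, \ell'$, one must conclude that $V(H)$ is $(\eta',t')$-closed. Your proposal does not prove this statement. What you have sketched is essentially the paper's proof of Lemma~\ref{lem:linked}: you assume $\delta(H) \geq n/3 + \alpha n$, which is not among the hypotheses of Theorem~\ref{thm:HZZ}, and in the case $r=2$ you explicitly invoke Theorem~\ref{thm:HZZ}, so as a proof of the assigned statement the argument is circular. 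Moreover every quantitative step in your outline leans on the codegree bound (e.g.\ ``$|N(ab)| \geq n/3+\alpha n$ for every pair'', ``$\Omega(n^5)$ copies of $T$ in any induced subgraph of order $(1-o(1))n$''), none of which is available here; also $r$ need not be $2$, and the partition is given to you rather than produced by Theorem~\ref{thm:HT}.

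A genuine proof must work directly from the stated hypotheses: given $x \in V_\ell$ and $y \in V_{\ell'}$, express $u_\ell - u_{\ell'}$ as a bounded integer combination of vectors in $I^\mu_\Part(H)$, realise each such vector by $\Omega(n^5)$ copies of $T$, and chain these copies together with the $(\eta,t)$-closedness inside each part (a ``transferral'' argument) to produce, for some bounded $t'$, at least $\eta'\binom{n}{5t'-1}$ sets $S$ such that both $H[S \cup \{x\}]$ and $H[S \cup \{y\}]$ admit perfect $T$-tilings. Nothing in your sketch carries out this step, and it is the entire content of the theorem. Incidentally, even if your write-up were re-targeted at Lemma~\ref{lem:linked}, the key lattice verification is left open by your own admission (the ``hard part'' about $N(cd)$ splitting across both parts); the paper handles that point in Lemma~\ref{lem:2parts} by a case distinction on whether almost all aligned pairs have at least $\alpha n$ neighbours in one fixed part, together with explicit constructions of copies of $T$ with index vectors $(5,0),(4,1),(3,2),(2,3)$, rather than by the averaging/double-counting route you outline.
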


To apply Theorem~\ref{thm:HZZ}, we show that for every partition~$\Part$ of~$V(H)$ into two not-too-small parts, there are index vectors differing only by a difference of unit vectors which are both well-represented by copies of~$T$. 

\begin{lemma} \label{lem:2parts}
Suppose that~$1/n \ll \psi \ll \alpha, \beta$, and let~$H$ be a 3-graph on~$n$ vertices with~$\delta(H) \geq n/3 + \alpha n$. If~$\mathcal{P}=(A,B)$ is a vertex partition of~$V(H)$ such that~$|A|, |B| \geq \beta n$, then there exist index vectors~$i, i' \in I_\mathcal{P}(H)$ such that~$i - i' = (1,-1)$ and~$H$ contains at least~$\psi n^5$ copies of~$T$ with~$i(T) = i$ and at least~$\psi n^5$ copies of~$T$ with~$i(T) = i'$ (so in particular $i, i' \in I^\psi_\mathcal{P}(H)).$
\end{lemma}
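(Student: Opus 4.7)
The plan is to exploit the pendant vertex $e$ of $T$: for any ordered 4-tuple $(a,b,c,d)$ of distinct vertices with $abc, abd \in E(H)$, every $e \in N(cd) \setminus \{a,b\}$ yields a labelled copy of $T$ on these five vertices, and if $\{c,d\}$ has many neighbours in both $A$ and $B$ then the two possible sides for $e$ produce copies of $T$ realising the two adjacent index vectors $(k+1,4-k)$ and $(k,5-k)$, where $k := |\{a,b,c,d\} \cap A|$. I fix a small parameter $\mu$ with $\psi \ll \mu \ll \alpha, \beta$ and call a pair $\{c,d\}$ \emph{well-split} if $\min\{|N(cd) \cap A|, |N(cd) \cap B|\} \geq \mu n$, and otherwise \emph{$A$-heavy} or \emph{$B$-heavy} according to which side dominates. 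A Cauchy--Schwarz count based on $\sum_{\{a,b\}} \deg(ab) = 3e(H)$ together with $\deg(ab) \geq n/3 + \alpha n$ gives at least $\tfrac{1}{10} n^4$ ordered 4-tuples $(a,b,c,d)$ with $abc, abd \in E(H)$, providing the raw material for the argument.

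\textbf{Case 1: at least a positive fraction of these 4-tuples have $\{c,d\}$ well-split.} Pigeonholing on the five possible values of $k$, some fixed $k_0$ receives $\Omega(n^4)$ such 4-tuples. Each extends to at least $\mu n - 2$ labelled copies of $T$ with $e \in A$ and at least $\mu n - 2$ with $e \in B$; dividing by the bounded labelling factor produces at least $\psi n^5$ copies of each of the two adjacent indices $(k_0+1, 4-k_0)$ and $(k_0, 5-k_0)$, completing this case.

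\textbf{Case 2: almost all 4-tuples have $\{c,d\}$ heavy.} After swapping $A$ and $B$ if necessary, at least $\Omega(n^4)$ of the 4-tuples have $\{c,d\}$ $A$-heavy, so $|N(cd) \cap A| \geq n/3$. I then show that both $(5,0)$ and $(4,1)$ are popular, supplying the required adjacent pair. For $(5,0)$, an averaging argument locates $\Omega(n^2)$ $A$-heavy pairs inside $\binom{A}{2}$; the minimum codegree then supplies $\Omega(n^2)$ pairs $\{a,b\} \in \binom{A}{2}$ with $abc, abd \in E(H)$ for each such $\{c,d\}$, and $\Omega(n)$ completions $e \in N(cd) \cap A$, yielding $\Omega(n^5)$ copies of $T$ inside $A$. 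For $(4,1)$, each $v \in B$ lies in $\Omega(n^2)$ edges $\{v,c,d\}$ with $c, d \in A$ by the minimum codegree together with $|A| \geq \beta n$, and for most such pairs the Case 2 hypothesis gives $|N(cd) \cap A| \geq n/3$; choosing $\{a,b\} \in \binom{N(cd) \cap A \setminus \{v\}}{2}$ produces a copy of $T$ on $\{a,b,c,d,v\}$ of index $(4,1)$, and summing over $v \in B$ yields $\Omega(n^5)$ such copies.

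\textbf{Main obstacle.} The delicate step is Case 2, specifically the claim that the $A$-heavy pairs distribute favourably: many of them must lie in $\binom{A}{2}$ for the $(5,0)$ count, and each $v \in B$ must be joined to many $A$-heavy pairs in $\binom{A}{2}$ for the $(4,1)$ count. If instead the $A$-heavy pairs were concentrated in $\binom{B}{2}$, the two popular indices would have to be replaced by two adjacent members of $\{(1,4), (2,3), (3,2)\}$, corresponding to 4-tuples where $\{a,b\}$ sits inside $A$, spans $A$ and $B$, or sits inside $B$ respectively. Tightening the averaging so that it covers this subcase---particularly the balanced regime where $|A|, |B| \approx n/2$---is where most of the work lies, and I expect it will require exploiting $|A|, |B| \geq \beta n$ together with the minimum codegree to force enough mixing between the two sides.
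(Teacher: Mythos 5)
Your Case~1 is a clean and genuinely different idea from the paper's. You pivot the pendant vertex $e$ of $T$ between $A$ and $B$ while fixing the other four vertices, whereas the paper instead shares the three vertices $z_1,z_2,w$ between two copies of $T$ and swaps a split pair $\{u,v\}$ for an aligned pair $\{x,y\}$ that both see $z_1,z_2$. Under your ``well-split'' hypothesis the pendant-vertex pivot is simpler, and the pigeonhole over $k = |\{a,b,c,d\} \cap A|$ immediately yields adjacent index vectors, so this part of your argument works.

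Case~2 is where there is a real gap, and you have correctly located it. Your hypothesis there is only that most 4-tuples $(a,b,c,d)$ with $abc,abd \in E(H)$ have $\{c,d\}$ $A$-heavy, i.e.\ $|N(cd) \cap B| < \mu n$. This says nothing about where those heavy pairs sit. For the $(5,0)$ count you need $\Omega(n^2)$ $A$-heavy pairs inside $\binom{A}{2}$, but the heavy pairs could all lie in $\binom{B}{2}$ (a pair of vertices in $B$ whose link lives almost entirely in $A$ is exactly $A$-heavy). Likewise the $(4,1)$ step asserts that every $v \in B$ lies in $\Omega(n^2)$ edges $\{v,c,d\}$ with $c,d \in A$, citing the codegree and $|A| \geq \beta n$, but the codegree bound $\deg(vc) \geq n/3+\alpha n$ only gives common neighbours somewhere, not in $A$; if $|B| > n/3+\alpha n$ they might all land in $B$. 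So the pair $\{(5,0),(4,1)\}$ is not guaranteed to be popular under your case hypothesis.

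The paper handles precisely this situation by making the case distinction in terms of \emph{aligned} pairs (both in $A$ or both in $B$) rather than all pairs, and by not insisting on $(5,0)$ and $(4,1)$. Its Case~2 hypothesis is that almost every aligned pair has at least $\alpha n$ neighbours in $A$ (forcing $|A| \geq \alpha n$ and guaranteeing good pairs inside $\binom{A}{2}$ for the $(5,0)$ count). Then within that case it branches again: if many split pairs have few neighbours in $B$, it gets $(4,1)$; otherwise it pivots to the pair $(2,3)$ and $(3,2)$, built from split pairs with many neighbours in $B$ and good aligned pairs in $B$ respectively. It is this third option that your proposal omits, and, as you anticipate, the balanced regime (e.g.\ $|A| \approx |B| \approx n/2$ with links biased one way) is exactly where $(5,0)/(4,1)$ can be the wrong target and $(2,3)/(3,2)$ is needed. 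Adding that fallback, and re-expressing your heaviness condition in terms of aligned pairs so that the $(5,0)$ count is safe, would close the gap.
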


Note that index vectors of copies of $T$ with respect to~$(A, B)$ have the form~$(x, 5-x)$ for some~$x$, so we want, for example,~$\psi n^5$ copies with index (2,3) and~$\psi n^5$ copies with index vector (3,2).

\begin{proof}
    Introduce a new constant $\gamma$ with $\psi \ll \gamma \ll \alpha, \beta$. For each pair of vertices $u, v \in V(H)$, say that $\{u,v\}$ is an \emph{aligned} pair if $u, v \in A$ or $u, v \in B$, and that $\{u, v\}$ is a \emph{split} pair if $|\{u,v\} \cap A| = 1$. We consider two cases.
    
\medskip
\noindent {\em Case 1: No $S \in \{A,B\}$ has $|N(xy) \cap S| \geq \alpha n$ for all but at most $\gamma n^2$ aligned pairs $\{x,y\}$.}
\medskip

   In this case, let $M_A$ be the set of aligned pairs with at least $n/3$ neighbours in $A$ and let $M_B$ be the set of aligned pairs with at least $n/3$ neighbours in $B$. Our case assumption implies that there are at least $\gamma n^2$ aligned pairs that have at most $\alpha n$ neighbours in $A$, and there are at least $\gamma n^2$ aligned pairs that have at most $\alpha n$ neighbours in $B$. Since $\delta(H) \geq n/3+\alpha n$, this immediately gives $\size{M_A}, \size{M_B} \ge \gamma n^2$ and that $\size{A}, \size{B} \geq n/3$. 

Now let $\{u, v\}$ be a split pair. If $|N(uv) \cap A| \geq |A| - n/3 + \alpha n/2$, then at most $n/3 - \alpha n/2$ vertices of $A$ are not in $N(uv)$, so for every $\{x_1, y_1\} \in M_A$ we have $|N(uv) \cap N(x_1y_1) \cap A| \geq 
\alpha n/2$. In this case we say that $\{u, v\}$ is \emph{good for $M_A$}. If instead $|N(uv) \cap A| < |A| - n/3 + \alpha n/2$ then we have
$$|N(uv) \cap B| \geq \delta(H) - |N(uv) \cap A| > \left(\frac{n}{3} + \alpha n\right) - \left(|A| - \frac{n}{3} + \frac{\alpha n}{2}\right) = |B| - \frac{n}{3} + \frac{\alpha n}{2},$$
so for every $\{x_2, y_2\} \in M_B$ we have $|N(uv) \cap N(x_2y_2) \cap B| \geq \alpha n/2$. In this case we say that $\{u, v\}$ is \emph{good for $M_B$}. So every split pair $\{u, v\}$ is either good for $M_A$ or good for $M_B$. 

        For each split pair $\{u,v\}$, let $J(u,v)\in \{A,B\}$ be such that $\{u,v\}$ is good for $M_J$. 
Let $\mathcal{S}$ denote the set of all sequences $S = (u, v, x, y, z_1, z_2, w)$ of distinct vertices of $H$ such that (1) $\{u,v\}$ is a split pair; (2) $\{x,y\} \in M_{J(u, v)}$; (3) $z_1,z_2 \in N(uv) \cap N(xy) \cap J(u, v)$; and (4) $w \in N(z_1z_2)$. Choosing the vertices of $S$ in turn, there are at least $n^2/9$ options for $(u, v)$, at least $\gamma n^2$ options for $(x,y)$, at least $\binom{\alpha n/2}{2} \geq \alpha^2 n^2/9$ options for $(z_1,z_2)$, and at least $n/3$ options for $w$. So $|\mathcal{S}| \geq \gamma \alpha^2 n^7/243 \geq 40 \psi n^7$.

Observe that for each $S \in \mathcal{S}$ we obtain copies $T_1(S)$ and $T_2(S)$ of $T$ in $H$, where $T_1(S)$ has edges $uvz_1, uvz_2, z_1z_2w$ and $T_2(S)$ has edges $xyz_1, xyz_2$ and $z_1z_2w$. Moreover, since $\{u,v\}$ is split and $\{x,y\}$ is aligned we have $i(T_1(S)) - i(T_2(S)) \in \{(1,-1), (-1, 1)\}$. There are at most ten possibilities for such a pair $(i(T_1(S)), i(T_2(S)))$, so by averaging there exist $i$ and $i'$ with $i - i' \in \{(1,-1), (-1, 1)\}$ for which at least $4\psi n^7$ sequences $S \in \mathcal{S}$ have $i(T_1(S)) = i$ and $i(T_2(S)) = i'$. Fix such $i$ and $i'$. For each copy $T'$ of $T$ in $H$ there are at most $4n^2$ sequences $S \in \mathcal{S}$ with $T_1(S) = T'$ and at most $4n^2$ sequences $S \in \mathcal{S}$ with $T_2(S) = T'$. It follows that at least $\psi n^5$ copies of $T$ in $H$ have index vector $i$, and at least $\psi n^5$ copies of $T$ in $H$ have index vector $i'$, as required.

\medskip \noindent {\em Case 2: There is $S \in \{A, B\}$ such that $|N(xy) \cap S| \geq \alpha n$ for all but at most $\gamma n^2$ aligned pairs $\{x,y\}$.}

\medskip
    Assume without loss of generality that $S = A$. Say that an aligned pair $\{u,v\}$ is \emph{good} if $|N(uv) \cap A| \geq \alpha n$. So our case assumption is that at most $\gamma n^2$ pairs are not good; in particular, we must have $|A| \geq \alpha n$. 
    We may construct a copy of $T$ with index vector $(5,0)$ in the following way. First choose a good pair $\{u, v\}$ with $u, v \in A$. Next choose a good pair $z_1, z_2 \in N(uv) \cap A$, and finally choose $z_3 \in N(z_1 z_2) \cap (A \setminus \{u, v\})$. This gives a copy of $T$ in $H[A]$ with edges $uvz_1, uvz_2, z_1z_2z_3$. When making these choices, there are at least $\binom{\alpha n}{2} - \gamma n^2 \geq \alpha^2 n^2/3$ options for $\{u, v\}$. Then (since $\{u, v\}$ is good) there are at least $\binom{\alpha n}{2}  - \gamma n^2 \geq \alpha^2 n^2/3$ options for $\{z_1, z_2\}$, and finally at least $\alpha n - 2$ options for $z_3$. So in total we obtain at least $(\alpha^2 n^2/3) \cdot (\alpha^2 n^2 /3) \cdot (\alpha n - 2) \geq \psi n^5$ copies of $T$ with index vector $(5,0)$.

Now suppose that there are more than $\gamma n^2$ split pairs $\{u, v\}$ with $|N(u v) \cap B| < n/3$. Each such pair can be extended to a copy of $T$ in $H$ with index vector (4, 1) in the following way. First choose a good pair $\{z_1, z_2\}$ with $z_1, z_2 \in N(uv) \cap A$, and then choose $z_3 \in N(z_1z_2) \cap (A \setminus \{u, v\})$. This gives a copy of $T$ with edges $uvz_1, uvz_2$ and $z_1z_2z_3$ which has index vector (4, 1). Since $|N(uv) \cap A| > \alpha n$, when making these choices there are at least $\binom{\alpha n}{2} - \gamma n^2 \geq \alpha^2 n^2/3$ options for $\{z_1, z_2\}$, and then (since $\{z_1, z_2\}$ is good) at least $\alpha n-1 $ options for $z_3$. In total this gives rise to at least $\gamma n^2 \cdot (\alpha^2 n^2/3) \cdot (\alpha n-1) \geq \psi n^5$ copies of $T$ with index vector (4,1), so taking $i= (5, 0)$ and $i' = (4, 1)$ gives the desired outcome.

We therefore assume that all but at most $\gamma n^2$ split pairs $\{u, v\}$ have $|N(u v) \cap B| \geq n/3$. Since there are $|A||B| \geq \beta n \cdot (1-\beta) n \geq \beta n^2/2 $ split pairs in total, it follows that there are at least $\beta n^2/2 - \gamma n^2 \geq \beta n^2/3$ split pairs $\{u, v\}$ with $|N(u v) \cap B| \geq n/3$. Call these pairs {\em useful}, and observe that their existence implies that $|B| \geq n/3$.

Each useful pair may be extended to a copy of $T$ in $H$ with index vector (2, 3) in the following way. First choose a good pair $\{z_1, z_2\}$ with $z_1, z_2 \in N(uv) \cap B$, then choose $z_3 \in N(z_1z_2) \cap (A \setminus \{u, v\})$. This gives a copy of $T$ with edges $uv z_1, uvz_2$ and $z_1z_2z_3$ which has index vector (2, 3). When making these choices, since $\{u, v\}$ is useful and at most $\gamma n^2$ pairs are not good, there are at least $\binom{n/3}{2} - \gamma n^2 \geq n^2/20$ options for the pair $\{z_1, z_2\}$, and since $\{z_1, z_2\}$ is good there are at least $\alpha n-1$ options for $z_3$. So in total we obtain at least $(\beta n^2/3) \cdot (n^2/20) \cdot (\alpha n - 1) \geq \psi n^5$ copies of $T$ with index vector (2, 3). 

In a similar way, each good pair $\{u, v\} $ with $u, v \in B$ may be extended to a copy of $T$ with index vector (3, 2) in the following way. First choose a good pair $\{z_1, z_2\}$ with $z_1, z_2 \in N(uv) \cap A$, and then choose $z_3 \in N(z_1z_2) \cap A$. This gives a copy of $T$ in $H$ with edges $uv z_1, uvz_2$ and $z_1z_2z_3$ which has index vector (3, 2). When making these choices, since $\{u, v\}$ is good and at most $\gamma n^2$ pairs are not good, there are at least $\binom{\alpha n}{2} - \gamma n^2 \geq \alpha^2 n^2/3$ options for the pair $\{z_1, z_2\}$. Since $\{z_1, z_2\}$ is good there are then at least $ \alpha n$ options for $z_3$. Since there are at least $\binom{|B|}{2} - \gamma n^2 \geq n^2/20$ good pairs $\{u, v\}$ with $u, v \in B$, overall we obtain at least $(n^2/20) \cdot (\alpha^2n^2/3) \cdot \alpha n > \psi n^5$ copies of $T$ in $H$ with index vector (3, 2). So we may take $i = (3, 2)$ and $i' = (2,3)$.
    \end{proof}

Finally, by combining the previous results we prove Lemma~\ref{lem:linked}.

\begin{proof}[Proof of Lemma~\ref{lem:linked}]
Introduce new constants $\mu, \eta'$ and $\eta''$ with~$1/n \ll \eta \ll 1/t \ll \mu, \eta' \ll \eta'' \ll \alpha$ and let~$H$ be a~$3$-graph on~$n$ vertices with~$\delta(H) \geq n/3+\alpha n$. By Lemma~\ref{lem:15parts} every vertex~$u \in V(H)$ is~$(\eta'', 1)$-linked in~$H$ to at least~$\alpha n/2$ other vertices of~$V(H)$, and every set of three distinct vertices of~$H$ contains two distinct vertices which are~$(\eta'', 1)$-linked in~$H$. So we may apply Theorem~\ref{thm:HT} with $c=2$. This tells us that either there is a partition~$\Part$ of~$V(H)$ into two parts~$U_1$ and $U_2$, with $|U_1|, |U_2| \geq (\alpha/2 - \eta'')n$, for which $U_1$ and $U_2$ are each~$(\eta', 2)$-closed in~$H$, or $V(H)$ is itself $(\eta', 2)$-closed in $H$. We may assume the former, since in the latter case we may take an arbitrary near-balanced partition of $V(H)$ into parts $U_1$ and $U_2$. By Lemma~\ref{lem:2parts} there exist index vectors~$i, i' \in I^\mu_{\Part}(H)$ for which~$i - i' = u_{1}-u_{2}$. So for each $\ell, \ell' \in \{1, 2\}$, if $\ell \neq \ell'$ then $u_\ell -u_{\ell'} \in L_\Part^\mu(H)$ since $I^\mu_\Part(H)\subseteq L^\mu_\Part(H)$, whilst if $\ell = \ell'$ then $u_\ell -u_{\ell'} \in L_\Part^\mu(H)$ since $0 \in L^\mu_\Part(H)$. So we may apply Theorem~\ref{thm:HZZ} to conclude that~$V(H)$ is~$(\eta, t)$-closed in~$H$, as required.
\end{proof}

\section{Almost-perfect tilings} \label{sec:tiling}
The aim of this section is to prove the tiling lemma (Lemma~\ref{lem:tiling}). We do this by first proving a fractional version of the result (Lemma~\ref{frac_tiling_with_bound}), from which Lemma~\ref{lem:tiling} follows by applying a result of Pippenger and Spencer~\cite{PS}. Towards this end we make the following definitions regarding fractional tilings in $k$-graphs, all of which are standard within this theory.

Let $H$ and $F$ be $k$-graphs, and let $\mathcal{F}(H)$ denote the set of all copies $F'$ of $F$ in $H$. Also for each $u, v \in V(H)$ let $\mathcal{F}_u(H)$ denote the set of all $F' \in \mathcal{F}(H)$ with $u \in V(F')$ and $\mathcal{F}_{uv}(H)$ denote the set of all $F' \in \mathcal{F}(H)$ with $u,v \in V(F')$. 
A \emph{fractional $F$-tiling} in $H$ is a function $w : \mathcal{F}(H) \to [0,1]$ such that every $u \in V(H)$ has $\sum_{F' \in \mathcal{F}_u(H)} w(F') \leq 1$. 
This is the linear relaxation of the (non-fractional) tiling problem: if $w(F') \in \{0,1\}$ for every $F' \in \mathcal{F}(H)$ then $\{F' \in \mathcal{F}(H) : w(F') = 1\}$ is an $F$-tiling in $H$, that is, a set of pairwise vertex-disjoint copies of $F$ in $H$. 
Say that $w$ is \emph{perfect} if every $u \in V(H)$ has $\sum_{F' \in \mathcal{F}_u(H)} w(F')= 1$; observe that this is equivalent to saying that $\sum_{F' \in \mathcal{F}(H)} w(F') = n/|V(F)|$. 
For vertices $u, v \in V(H)$ we write $w(u) = \sum_{F' \in \mathcal{F}_u(H)} w(F')$ and $w(uv) = \sum_{F' \in \mathcal{F}_{uv}(H)} w(F')$. Now let $B$ be a graph with $V(B) = V(H)$. 
Say that a copy $F'$ of $F$ in $H$ is \emph{$B$-avoiding} if there is no edge $uv \in E(B)$ with $u, v \in V(F')$. 
Similarly, we say that a fractional $F$-tiling $w$ in $H$ is \emph{$B$-avoiding} if every copy $F'$ of $F$ with $w(F') > 0$ is $B$-avoiding; note that this is equivalent to saying that $w(uv) = 0$ for every edge $uv \in E(B)$.

When we consider $F$-tilings in the specific case where $F = T$ is the generalised triangle, we write $\mathcal{T}(H), \mathcal{T}_u(H)$ and $\mathcal{T}_{uv}(H)$ in place of $\mathcal{F}(H), \mathcal{F}_u(H)$ and $\mathcal{F}_{uv}(H)$.

Now suppose that $H$ is a $k$-graph with $n$ vertices, and identify the vertex set of $H$ with $[n]$. For each set $U \subseteq V(H)$, we define the \emph{characteristic vector of $U$}, denoted $\mathbbm{1}_U$, to be the vector in $\mathbb{R}^n$ with
$$
(\mathbbm{1}_U)_i=
\begin{cases}
1 \mbox{~if~} i \in U,\\
0 \mbox{~otherwise.}
\end{cases}
$$
The {\it positive cone} of a set of $t$ vectors $\mathbf{v}_1, \ldots, \mathbf{v}_t \in \mathbb{R}^n$ is the set 
$$\pc(\{\mathbf{v}_1, \ldots, \mathbf{v}_t\}) = \left\{\sum_{i \in [t]} \lambda_i \mathbf{v}_i : 
\mbox{$\lambda_i \in \mathbb{R}, \lambda_i \geq 0$ for each $i \in [t]$}
\right\}.$$

Let $\mathbf{1} \in \mathbb{R}^n$ denote the all-ones vector, and observe that $H$ has a perfect fractional $F$-tiling if and only if $\mathbf{1} \in \pc(\{\mathbbm{1}_{V(F')} : F' \in \mathcal{F}(H)\})$ (given by setting $w(F')$ to be the corresponding coefficient $\lambda_i$ for each $F' \in \mathcal{F}(H)$). 
In this context, Farkas' lemma on solvability of systems of linear inequalities gives a useful consequence of the non-existence of a perfect fractional $F$-tiling in $H$, 
namely that there exists ${\bf a} \in \mathbb{R}^n$
for which $\mathbf{a} \cdot \mathbbm{1}_{V(F')} \geq 0$ 
for every $F' \in \mathcal{F}(H)$ and 
${\bf a} \cdot {\bf 1} <0$.

\begin{lemma}[Farkas' lemma] \label{lem:farkas}
    If $X \subseteq \mathbb{R}^n$ is finite and ${\bf y} \in \mathbb{R}^n \setminus \pc(X)$, then there exists some ${\bf a} \in \mathbb{R}^n$ such that ${\bf a}\cdot {\bf x} \geq 0$ for all ${\bf x} \in X$ and ${\bf a} \cdot {\bf y} <0$.
\end{lemma}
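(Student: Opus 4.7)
The plan is to prove Farkas' lemma geometrically via closest-point projection onto the convex cone $K := \pc(X)$. Since $K$ is a convex set containing $\mathbf{0}$ and $\mathbf{y} \notin K$, I aim to find the nearest point $\mathbf{z}^* \in K$ to $\mathbf{y}$ and show that the vector $\mathbf{a} := \mathbf{z}^* - \mathbf{y}$ is the desired separating functional.

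The main technical obstacle is showing that $K$ is closed, which is not automatic for arbitrary convex cones but does hold for finitely generated ones. I would prove this using a Carath\'eodory-type theorem for conical hulls: every element of $\pc(X)$ can be written as a non-negative combination of a linearly independent subset of $X$ (of size at most $n$). This gives the finite decomposition $K = \bigcup_{X' \subseteq X \text{ linearly independent}} \pc(X')$. For each linearly independent $X' = \{\mathbf{x}'_1, \dots, \mathbf{x}'_s\}$, the map $\varphi : \mathbb{R}^s \to \mathbb{R}^n$ defined by $\varphi(\lambda_1, \dots, \lambda_s) = \sum_i \lambda_i \mathbf{x}'_i$ is injective and linear, hence a homeomorphism onto its image (which is itself a closed linear subspace of $\mathbb{R}^n$). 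So $\pc(X') = \varphi(\mathbb{R}_{\geq 0}^s)$ is the image of a closed set under this homeomorphism and is therefore closed in $\mathbb{R}^n$, and $K$ is closed as a finite union of closed sets.

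Once $K$ is known to be closed, the remainder is a standard projection argument. The continuous function $\mathbf{z} \mapsto \|\mathbf{z} - \mathbf{y}\|^2$ restricted to the non-empty compact set $K \cap \overline{B}(\mathbf{0}, 2\|\mathbf{y}\|)$ attains its minimum at some $\mathbf{z}^* \in K$, and this $\mathbf{z}^*$ minimises distance to $\mathbf{y}$ over all of $K$ (since any $\mathbf{z} \in K$ farther than $\|\mathbf{y}\|$ from $\mathbf{0}$ is farther than $\mathbf{0}$ from $\mathbf{y}$). Set $\mathbf{a} := \mathbf{z}^* - \mathbf{y}$, noting $\mathbf{a} \neq \mathbf{0}$ because $\mathbf{y} \notin K$. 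For each $\mathbf{x} \in X$ and each $t > 0$ the point $\mathbf{z}^* + t\mathbf{x}$ lies in $K$, so optimality of $\mathbf{z}^*$ gives $\|\mathbf{z}^* + t\mathbf{x} - \mathbf{y}\|^2 \geq \|\mathbf{z}^* - \mathbf{y}\|^2$; expanding and dividing by $t$ yields $2(\mathbf{z}^* - \mathbf{y}) \cdot \mathbf{x} + t\|\mathbf{x}\|^2 \geq 0$, and letting $t \to 0^+$ gives $\mathbf{a} \cdot \mathbf{x} \geq 0$. Writing $\mathbf{z}^* = \sum_i \lambda_i \mathbf{x}_i$ with $\lambda_i \geq 0$ and $\mathbf{x}_i \in X$ then yields $\mathbf{a} \cdot \mathbf{z}^* = \sum_i \lambda_i (\mathbf{a} \cdot \mathbf{x}_i) \geq 0$, and hence
$$\mathbf{a} \cdot \mathbf{y} = \mathbf{a} \cdot \mathbf{z}^* - \mathbf{a} \cdot (\mathbf{z}^* - \mathbf{y}) = \mathbf{a} \cdot \mathbf{z}^* - \|\mathbf{a}\|^2 \leq -\|\mathbf{a}\|^2 < 0,$$
as required.

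An alternative route, which sidesteps the appeal to closedness, would be induction on $|X|$ via Fourier--Motzkin-style elimination: one separates $\mathbf{y}$ from $X \setminus \{\mathbf{x}_0\}$ by the inductive hypothesis and then corrects the resulting functional by a suitable multiple of a witness coming from $\mathbf{x}_0$. I expect the projection argument above to be cleaner overall, with the Carath\'eodory-based proof of closedness being the one place where genuine care is needed; everything else is essentially a first-order optimality calculation.
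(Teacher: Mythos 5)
The paper treats Lemma~\ref{lem:farkas} as a classical result and gives no proof of its own, so the only question is whether your projection argument stands on its own. Its skeleton is the standard one (conical Carath\'eodory to get closedness of $\pc(X)$, then nearest-point projection and a first-order optimality computation), and the closedness argument and the derivation of $\mathbf{a}\cdot\mathbf{x}\ge 0$ for $\mathbf{x}\in X$ are fine. But the final step has a genuine sign error: from $\mathbf{a}\cdot\mathbf{z}^*\ge 0$ you cannot conclude $\mathbf{a}\cdot\mathbf{y}=\mathbf{a}\cdot\mathbf{z}^*-\|\mathbf{a}\|^2\le-\|\mathbf{a}\|^2$; that inequality goes the wrong way (you would need an \emph{upper} bound $\mathbf{a}\cdot\mathbf{z}^*\le 0$, whereas the bound you established is a lower bound, which only gives the useless $\mathbf{a}\cdot\mathbf{y}\ge-\|\mathbf{a}\|^2$). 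As written, $\mathbf{a}\cdot\mathbf{y}<0$ is not proved.

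The missing ingredient is the orthogonality condition $\mathbf{a}\cdot\mathbf{z}^*=0$, which you get by also perturbing $\mathbf{z}^*$ \emph{toward the origin}: since $\pc(X)$ is a cone, $(1-t)\mathbf{z}^*\in\pc(X)$ for $t\in(0,1)$, so minimality gives $\|(1-t)\mathbf{z}^*-\mathbf{y}\|^2\ge\|\mathbf{z}^*-\mathbf{y}\|^2$; expanding, dividing by $t$ and letting $t\to 0^+$ yields $(\mathbf{z}^*-\mathbf{y})\cdot\mathbf{z}^*\le 0$, i.e.\ $\mathbf{a}\cdot\mathbf{z}^*\le 0$, and combined with your $\ge 0$ this gives $\mathbf{a}\cdot\mathbf{z}^*=0$ and hence $\mathbf{a}\cdot\mathbf{y}=-\|\mathbf{a}\|^2<0$. (Two smaller points: your parenthetical justification that the minimiser over the ball is a global minimiser is misstated --- the correct observation is that any $\mathbf{z}$ with $\|\mathbf{z}\|>2\|\mathbf{y}\|$ satisfies $\|\mathbf{z}-\mathbf{y}\|\ge\|\mathbf{z}\|-\|\mathbf{y}\|>\|\mathbf{y}\|\ge\|\mathbf{z}^*-\mathbf{y}\|$ since $\mathbf{0}\in\pc(X)$ --- and the conical Carath\'eodory step is quoted rather than proved, though it is standard and acceptable to cite.) With the orthogonality step added, the proof is correct.
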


Finally, let $U =\{u_1, u_2, \ldots, u_{\ell}\}$ and $W = \{w_1, w_2, \ldots, w_{\ell}\}$ each be sets of $\ell$ vertices of $H$, ordered with $u_1 < u_2 < \dots < u_\ell$ and $w_1 < w_2 < \dots < w_\ell$. We say that $U$ {\it dominates} $W$ if $w_i \leq u_i$ for each $i \in [\ell]$.

Our next lemma gives a fractional version of the tiling lemma, showing that the same minimum codegree condition ensures that a $3$-graph on $n$ vertices either is close to extremal or contains a perfect fractional matching. Moreover, we may insist that the edges of non-zero weight in this fractional matching avoid a given small set $B$ of pairs of vertices.

\begin{lemma} \label{frac_tiling} 
Suppose that $1/n \ll \eps, \alpha \ll \gamma$, and let $V$ be a set of $n$ vertices. If $H$ is a $3$-graph on $V$ with
$\delta(H) \geq 2n/5 - \alpha n$ and $B$ is a graph on $V$ with maximum degree $\Delta(B) \leq \eps n$, then either $H$ admits a perfect $B$-avoiding fractional $T$-tiling or $H$ is $\gamma$-extremal.
\end{lemma}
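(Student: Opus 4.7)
The plan is to argue the contrapositive: assume $H$ admits no perfect $B$-avoiding fractional $T$-tiling, and deduce that $H$ is $\gamma$-extremal. Let $X := \{\mathbbm{1}_{V(T')} : T' \text{ is a } B\text{-avoiding copy of } T \text{ in } H\} \subseteq \mathbb{R}^n$. A perfect $B$-avoiding fractional $T$-tiling is exactly an expression of $\mathbf{1}$ as a nonnegative combination of elements of $X$ (the upper bound $w(T') \le 1$ from the definition is automatic, since for each $v \in V(H)$ the coefficients of copies through $v$ sum to $1$). Hence non-existence of such a tiling is equivalent to $\mathbf{1} \notin \pc(X)$, and Lemma~\ref{lem:farkas} supplies $\mathbf{a} \in \mathbb{R}^n$ with $\sum_{v \in V(T')} a_v \geq 0$ for every $B$-avoiding copy $T'$ of $T$ in $H$, yet $\sum_{v \in V(H)} a_v < 0$. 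By rescaling I may assume $\max_v |a_v| = 1$.

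Next I would extract from $\mathbf{a}$ a candidate extremal set. Let $S \subseteq V(H)$ consist of the $\lfloor 3n/5 \rfloor$ vertices with the smallest values of $a_v$. Because $\sum_v a_v < 0$, $S$ carries the negative weight, so the central task is to show that $d(H[S]) \le \gamma$, which witnesses $\gamma$-extremality. Supposing otherwise, $H[S]$ contains at least $\gamma n^3/6$ edges; for each such edge $abc$ I would use the codegree hypothesis $\delta(H) \ge 2n/5 - \alpha n$ to choose $d$ with $abd \in E(H)$ and then $e$ with $cde \in E(H)$, producing a copy of $T$, and the bound $\Delta(B) \le \eps n$ ensures this can be done $B$-avoidingly, losing only $O(\eps n)$ candidates at each step. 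Averaging the Farkas inequalities $\sum_{v \in V(T')} a_v \ge 0$ over the resulting many copies, and comparing with the (strictly negative) average of $a_v$ on $S$, is then intended to yield a contradiction.

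The main obstacle is that $|V(H) \setminus S| \ge \lceil 2n/5 \rceil$ is essentially equal to the codegree lower bound, so the vertices $d$ and $e$ produced by the codegree extension may be forced to lie outside $S$; this undermines any direct averaging argument that only sees $a_v$ on $S$. To handle this I would perform the case distinction suggested in the proof strategy, splitting on the behaviour of the neighbourhoods $N(xy)$ for pairs $\{x,y\} \subseteq S$. In the structural case, where most pairs in $S$ have $N(xy) \subseteq V(H) \setminus S$ up to $o(n)$ exceptions, $H$ essentially matches the extremal example $H_{\mathrm{ext}}$, and $H[S]$ is automatically sparse. In the non-structural case, enough pairs in $S$ have many neighbours inside $S$ for the averaging to go through and produce a $B$-avoiding $T$-copy with strictly negative $a$-sum, contradicting Farkas. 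Once a set $S'$ of size within $o(n)$ of $3n/5$ with induced density $o(\gamma)$ is identified, trimming or extending by $o(n)$ vertices yields a set of size exactly $\lfloor 3n/5 \rfloor$ with induced density at most $\gamma$. The genuine difficulty is the case distinction above, which is where the combinatorics of $H$ must be translated into quantitative constraints on the dual vector $\mathbf{a}$.
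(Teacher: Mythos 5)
Your setup is correct up through the application of Farkas' lemma: the equivalence between a perfect $B$-avoiding fractional $T$-tiling and $\mathbf{1}\in\pc(\{\mathbbm{1}_{V(T')}: T'\ B\text{-avoiding}\})$, and the resulting dual vector $\mathbf{a}$, are exactly what the paper uses. The choice of $S$ as roughly the $3n/5$ vertices with smallest $a$-values is also the right intuition, and matches the set $S=\{v_1,\dots,v_{3n/5-7\beta n}\}$ used in the paper after the coordinates of $\mathbf a$ are sorted.

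The gap is in the step where you hope that ``averaging the Farkas inequalities over the resulting many copies, and comparing with the strictly negative average of $a_v$ on $S$, is then intended to yield a contradiction.'' That comparison does not produce a contradiction as stated. Each copy $T'$ you build has three vertices in $S$ but also two vertices that may lie outside $S$ and carry large positive $a$-values; the Farkas inequality $\mathbf a\cdot\mathbbm{1}_{V(T')}\ge 0$ is perfectly consistent with the three $S$-vertices being negative, so knowing $\sum_{v\in S}a_v<0$ gives you nothing. Even in your ``non-structural'' case, a $T$-copy with four or five vertices in $S$ need not have negative $a$-sum: the negative mass of $\mathbf a$ restricted to $S$ may be concentrated on a small fraction of $S$, while all your $T$-copies may happen to use vertices from the part of $S$ where $a_v\approx 0$. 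Nothing in your argument ties the combinatorial location of the $T$-copies to the ordering of the $a_v$, so no averaging forces a copy with negative $a$-sum. The obstacle you correctly identify -- that $d,e$ may be forced outside $S$ -- is not resolved by the proposed case distinction, because the difficulty persists even when the copies stay inside $S$.

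What the paper actually does is precisely designed to overcome this. After sorting so $a_1\le\cdots\le a_n$, the paper chooses a partition of $V(H)$ into three families of $5$-sets $\mathcal V_1,\mathcal V_2,\mathcal V_3$ (of sizes $\beta n$, $2\beta n$ and $n/5-3\beta n$) whose index sets are fixed explicit arithmetic progressions, and then finds three $B$-avoiding copies $T_1,T_2,T_3\in\mathcal T$ such that every $V\in\mathcal V_i$ \emph{dominates} $T_i$ in the sorted order. Domination gives $\mathbf a\cdot\mathbbm{1}_{T_i}\le\mathbf a\cdot\mathbbm{1}_V$ for each $V\in\mathcal V_i$, and since $\bigcup_i\mathcal V_i$ partitions $V(H)$, summing yields
\[
0 > \mathbf a\cdot\mathbf 1 = \sum_{i\in[3]}\sum_{V\in\mathcal V_i}\mathbf a\cdot\mathbbm{1}_V \ge \beta n\,(\mathbf a\cdot\mathbbm{1}_{T_1})+2\beta n\,(\mathbf a\cdot\mathbbm{1}_{T_2})+(n/5-3\beta n)(\mathbf a\cdot\mathbbm{1}_{T_3})\ge 0,
\]
a contradiction. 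The extremal alternative arises exactly where $T_2$ cannot be found: if no pair $xy\in G[S]$ has $|N_H(xy)\cap S|>4\eps n$ (with $S$ the low-index $3n/5-7\beta n$ vertices), one shows directly that $e(H[S'])\le\gamma\binom{3n/5}{3}$ for $S'$ the bottom $3n/5$ vertices. For $T_3$ a further case distinction on the neighbourhoods of a triangle of $G[U]$ (with $U$ a tiny initial segment) is needed, so that the fifth vertex can be taken from the low- or mid-range rather than the top. So the ``structure'' one must exploit is not the density of $H[S]$ alone but the \emph{location} of the $T$-copies relative to the sorted order, and the mechanism that turns this into an inequality is dominance plus a carefully balanced partition. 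This is the ingredient missing from your proposal, and filling it in essentially amounts to reproducing the paper's construction.
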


\begin{proof}[Proof of Lemma~\ref{frac_tiling}]
It suffices to prove the lemma in the case where 5 divides $n$. To see this, form augmented blow-ups $H'$ and $B'$ of $H$ and $B$ in the following way. Both $H'$ and $B'$ have vertex set $V'$, which is a set formed from $V$ by replacing each vertex $u \in V$ by five copies $u_1, \dots, u_5$. So $|V'| = 5n$. The edges of $H'$ are the triples $u_iv_jw_k$ for which $uvw \in E(H)$ and $i,j,k \in [5]$, and also all triples of the form $u_iu_jv_k$ with $u, v \in V$ and $i,j,k \in [5]$ with $i \neq j$. The edges of $B'$ are all the pairs $u_iv_j$ with $uv \in E(B)$ and $i,j \in [5]$, and also all pairs $u_iu_j$ with $u \in V$ and $i, j \in [5]$ with $i \neq j$. This definition implies that $\delta(H') \geq 5\delta(H) \geq (2/5-\alpha) (5n)$ and $\Delta(B') = 5\Delta(B) + 4 \leq 2\eps (5n)$. So the lemma in the case that $5$ divides $n$ implies that either $H'$ admits a perfect $B'$-avoiding fractional $T$-tiling $w'$ or $H'$ is $\gamma$-extremal. In the former case, our choice of $B'$ implies that for each $u \in V$ and $i \neq j$ there is no $T' \in \mathcal{T}(H')$ with $w'(T') > 0$ which contains both $u_i$ and $u_j$. In particular, every $B'$-avoiding copy $T'$ of $T$ in $H'$ corresponds directly to a copy $T^*$ of $T$ in $H$ (by replacing each vertex $u_i$ of $T'$ in $H'$ by the corresponding vertex $u$ in $H$). Setting $w(T^*)$ to be the average of $w'(T')$ over every copy $T'$ of $T$ in $H'$ that corresponds to $T^*$ therefore gives a perfect $B$-avoiding fractional $T$-tiling $w$ in $H$. On the other hand, if $H'$ is $\gamma$-extremal then there is a set $S' \subseteq V'$ of size $3n$ with $d(H'[S']) \leq \gamma$. Let $S^*$ be the set of all vertices $u \in V$ for which $u_i \in S'$ for some $i \in [5]$. We then have $|S'|/5 \leq |S^*| \leq |S'|$ and $e(H[S^*]) \leq e(H'[S'])$, so 
$$d(H[S^*]) = \frac{e(H[S^*])}{\binom{|S^*|}{3}} \leq \frac{e(H'[S'])}{\frac{1}{200}\binom{|S'|}{3}} = 200 d(H'[S']) \leq 200\gamma.$$ 
Since $|S^*| \geq 3n/5$, by an averaging argument there is a set $S \subseteq S^*$ of size $\lfloor 3n/5 \rfloor$ with $d(H[S]) \leq d(H[S^*])$, and $S$ witnesses that $H$ is $200\gamma$-extremal. 

So assume that 5 divides $n$, and let $G$ be the complement of $B$, so each vertex $v \in V(H)$ has at most $\eps n+1$ non-neighbours in $G$. Throughout the remainder of the proof we identify $B$ and $G$ with their edge sets, writing $uv \in B$ and $uv \in G$ for $uv \in E(B)$ and $uv \in E(G)$ respectively.

Let $v_1, \ldots, v_n$ be an enumeration of the vertices of $V(H)$, and let $\mathcal{T}$ be the set of all $T' \in \mathcal{T}(H)$ which do not contain an edge of $B$ (that is, there is no $T' \in \mathcal{T}$ and $uv \in E(B)$ with $u, v \in V(T')$). So $H$ has a perfect $B$-avoiding fractional $T$-tiling if and only if ${\bf 1} \in \pc(\{\mathbbm{1}_S: S \in \mathcal{T}\})$. 

Suppose for a contradiction that $H$ neither is $\gamma$-extremal nor admits a perfect $B$-avoiding fractional $T$-tiling; the latter condition implies that ${\bf 1} \notin \pc(\{\mathbbm{1}_S: S \in \mathcal{T}\})$. So by Farkas' lemma, there exists ${\bf a} \in \mathbb{R}^n$ such that ${\bf a}\cdot {\bf 1} <0$ and ${\bf a} \cdot \mathbbm{1}_S \geq 0$ for every $S \in \mathcal{T}$. By relabelling the vertices of $H$ if necessary, we may assume without loss of generality that the coordinates of $\mathbf{a}$ have $a_1 \leq a_2 \leq \ldots \leq a_n$.

Let $\beta \geq \alpha+5\eps$ be such that $\beta n$ is an integer and $\beta \ll \gamma$.
We consider the partition of $V(H)$ as $V(H) = \bigcup_{X \in \mathcal{V}_1 \cup \mathcal{V}_2 \cup \mathcal{V}_3} X$ where
\begin{enumerate}[(i)]
    \item $\mathcal{V}_1:=\{\{v_i, v_{\beta n+i}, v_{3n/5+\beta n+i}, v_{3n/5+2\beta n+i}, v_{3n/5+3\beta n+i}\}:i \in [\beta n]\}$,
    \item $\mathcal{V}_2:=\{\{v_{3n/5-7\beta n+i}, v_{3n/5-5\beta n+i}, v_{3n/5-3\beta n+i}, v_{3n/5-\beta n+i}, v_{3n/5+4\beta n+i}\}: i \in [2\beta n]\}$, and
    \item $\mathcal{V}_3:=\{\{v_{2\beta n +i}, v_{n/5-\beta n+i}, v_{2n/5-4\beta n +i}, v_{3n/5+6\beta n+i}, v_{4n/5+3\beta n+i}\}:i \in [n/5-3\beta n]\}$.
\end{enumerate}
We will show that there are $T_1, T_2, T_3 \in \mathcal{T}$ such that for each $i \in [3]$ every set $V \in \mathcal{V}_i$ dominates $T_i$. 
It follows that for each $i \in [3]$ we have ${\bf a} \cdot \mathbbm{1}_{T_i} \leq {\bf a} \cdot \mathbbm{1}_{V}$ for every $V \in \mathcal{V}_i$. 
Furthermore, since $\mathcal{V}_1 \cup \mathcal{V}_2 \cup \mathcal{V}_3$ is a partition of $V(H)$, we have $\sum_{i \in [3]} \sum_{V \in \mathcal{V}_i} \mathbbm{1}_V={\bf 1}$. 
So
$$0>{\bf a}\cdot {\bf 1} ={\bf a} \cdot \left( \sum_{i \in [3]} \sum_{V \in \mathcal{V}_i} \mathbbm{1}_V \right) \geq \beta n({\bf a} \cdot \mathbbm{1}_{T_1}) +  2\beta n({\bf a} \cdot \mathbbm{1}_{T_2}) + (n/5-3\beta n)({\bf a} \cdot \mathbbm{1}_{T_3}) \geq 0,$$
a contradiction. We conclude that $H$ either is $\gamma$-extremal or admits a perfect $B$-avoiding fractional $T$-tiling, as required.

It remains to prove the existence of $T_1, T_2$ and $T_3$ as claimed. For $T_1$, let $i =1$ and choose $j \leq \eps n + 2 \leq \beta n$ so that $v_iv_j \in G$, which is possible since $v_1$ has at most $\eps n+1$ non-neighbours in $G$. Next choose $k, \ell \leq 3n/5 + \alpha n + 4\eps n \leq \beta n$ in turn so that $v_k, v_\ell \in N_H(v_iv_j)$ and $v_i, v_j, v_k, v_\ell$ form a clique in $G$, which is possible by the same reason combined with the fact that $\deg_H(v_iv_j) \geq \delta(H) \geq 2n/5-\alpha n$. Finally an analogous calculation shows that we may choose $m \leq 3n/5 + \alpha n + 5 \eps n \leq 3n/5+\beta n$ so that $v_m \in N_H(v_k v_\ell)$ and $v_i,v_j,v_k,v_\ell \in N_G(v_m)$. This gives a copy $T_1$ of $T$ in $H$ with vertices $v_i, v_j, v_k, v_\ell$ and $v_m$ which is $B$-avoiding since the five vertices form a clique in $G$, so $T_1 \in \mathcal{T}$. Moreover, the bounds on $i, j, k, \ell$ and $m$ imply that $T_1$ is dominated by every set in $\mathcal{V}_1$, as required.

We now turn to $T_2$. Let $S = \{v_1, v_2, \dots, v_{3n/5 - 7 \beta n}\}$ and $S' = \{v_1, v_2, \dots, v_{3n/5}\}$. Suppose first that there is no edge $xy \in G[S]$ for which $|N_H(xy) \cap S| > 4 \eps n$. Since there are at most $(\eps n +1)|S|$ pairs $x, y \in S$ with $x \neq y$ and $xy \notin G$, it follows that 
$$e_H(S') \leq (\eps n+1) |S|^2 + \binom{|S|}{2} 4 \eps n + |S' \setminus S| \binom{|S'|}{2} \leq \gamma \binom{3n/5}{3},$$
and so $S'$ witnesses that $H$ is $\gamma$-extremal. So we may assume that there exist distinct $x, y \in S$ with $xy \in G$ and $|N_H(xy) \cap S| > 4 \eps n$. This allows us to choose $z, w \in S$ in turn so that $xyz$ and $xyw$ are both edges of $H$ and so that $x, y, z$ and $w$ form a clique in $G$. Finally, choose $i$ with $i \leq 3n/5+\alpha n + 5 \eps n$ so that $zwv_i$ is an edge of $H$ and so that $x,y,z,w \in N_G(v_i)$. This gives a copy $T_2$ of $T$ in $H$ with vertices $x, y, z, w, v_i$ which is $B$-avoiding, so $T_2 \in \mathcal{T}$. Since $x,y,z,w \in S$ and $i \leq 3n/5+\beta n$ we also have that $T_2$ is dominated by every set in $\mathcal{V}_2$, as required.

Last of all, for $T_3$, let $U = \{v_i : i \leq 5 \eps n\}$, $W = \{v_i : i \leq 2n/5-4\beta n\}$ and $W' = \{v_i : 2n/5-4\beta n+1 \leq i \leq 4n/5\}$. Observe that $G[U]$ must contain a triangle; let $K$ be the set of vertices of this triangle. We consider two cases.

\medskip \noindent {\em Case 1:} there are $x, y \in V(K)$ with $|N_H(xy) \cap W| \geq 3 \eps n$. If so, we may choose $z \in W$ such that $xyz \in E(H)$ and $x, y \in N_G(z)$. Now choose first $v_i \in N_H(x y)$ and then $v_j \in N_H(z v_i)$ such that $i, j \leq 3n/5+\alpha n + 5\eps n$ and so that $x, y, z, v_i$ and $v_j$ form a clique in $G$. This gives a copy $T_3$ of $T$ in $H$ with vertices $x, y, z, v_i$ and $v_j$ which is $B$-avoiding, so $T_3 \in \mathcal{T}$. Since $x, y \in U$, $z \in W$ and $i, j \leq 3n/5+\beta n$ we also have that $T_3$ is dominated by every set in $\mathcal{V}_3$, as required.

\medskip \noindent {\em Case 2:} for all $x, y \in V(K)$ we have $|N_H(xy) \cap W| < 3 \eps n$. In this case, each pair $x, y \in V(K)$ has $|N_H(xy) \cap W'| \geq \delta(H) - 3 \eps n - n/5 \geq n/5 - \alpha n - 3\eps n$. Since $|W'| = 2n/5 + 4\beta n$ it follows that we may write $V(K) = \{x, y, z\}$ in such a way that $|N_H(xy) \cap N_H(yz) \cap W'| \geq 4\eps n$. So we may choose $w \in |N_H(xy) \cap N_H(yz) \cap W'|$ for which $x, y, z \in N_G(w)$. Finally, choose $i \leq 3n/5+\alpha n+5 \eps n$ with $v_i \in N_H(xz)$ and with $z, y, z, w \in N_G(v_i)$. This gives a copy $T_3$ of $T$ in $H$ with vertices $x, y, z, w$ and $v_i$ which is $B$-avoiding, so again $T_3 \in \mathcal{T}$. Since $x, y, z \in U$, $w \in W'$ and $i \leq 3n/5+\beta n$ we also have that $T_3$ is dominated by every set in $\mathcal{V}_3$, as required.
\end{proof}

We now use Lemma~\ref{frac_tiling} to prove another similar result. The difference is that rather than requiring a fractional matching which avoids a set $B$ of pairs of vertices, we now seek a fractional matching which does not put too much weight on any pair of vertices.

\begin{lemma} \label{frac_tiling_with_bound} 
Suppose that $1/n \ll \eps, \alpha \ll \gamma$. Let $H$ be a $3$-graph on $n$ vertices with $\delta(H) \geq 2n/5-\alpha n$. 
If $H$ is not $\gamma$-extremal, then $H$ admits a perfect fractional $T$-tiling in which $w(uv) < 1/(\eps n)$ for all $u, v \in V(H)$.
\end{lemma}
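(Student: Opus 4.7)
The plan is to obtain the desired tiling as a minimiser of the maximum pair weight over the compact convex polytope of perfect fractional $T$-tilings of $H$, and then to use Lemma~\ref{frac_tiling} itself as a tool to rule out any minimiser with an overly large pair weight. Specifically, let $\mathcal{W}$ denote the set of all perfect fractional $T$-tilings of $H$. Since $H$ is not $\gamma$-extremal, Lemma~\ref{frac_tiling} applied with $B = \emptyset$ shows that $\mathcal{W}$ is non-empty; moreover $\mathcal{W}$ is a compact polytope, so the continuous convex objective $\phi(w) := \max_{\{u,v\} \in \binom{V(H)}{2}} w(uv)$ attains its minimum at some $w^* \in \mathcal{W}$. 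The goal is then to show $\phi(w^*) < 1/(\eps n)$.

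Suppose for a contradiction that $M := \phi(w^*) \geq 1/(\eps n)$, and let $B^* := \{\{u,v\} \in \binom{V(H)}{2} : w^*(uv) = M\}$ be the graph of pairs achieving this maximum weight. For every vertex $v$, a standard double-counting gives
\[ \sum_{u \neq v} w^*(uv) \;=\; \sum_{T' \in \mathcal{T}_v(H)} 4\, w^*(T') \;=\; 4, \]
since each $T' \in \mathcal{T}_v(H)$ contributes its weight to exactly four pairs at $v$ and $w^*$ is perfect. As each pair in $B^*$ has weight at least $1/(\eps n)$, this forces $\Delta(B^*) \leq 4\eps n$. Because $\eps \ll \gamma$, we may now reapply Lemma~\ref{frac_tiling} with $4\eps$ in place of $\eps$ (valid since $H$ is not $\gamma$-extremal) to obtain a perfect $B^*$-avoiding fractional $T$-tiling $w'$, so that $w'(uv) = 0$ for every $\{u,v\} \in B^*$.

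The contradiction is delivered by a small perturbation. Set $\delta := M - \max_{\{u,v\} \notin B^*} w^*(uv) > 0$ (the gap between the maximum and the next-largest pair weight), let $t := \delta/2$, and define $w_t := (1-t)w^* + t w' \in \mathcal{W}$. For $\{u,v\} \in B^*$ we have $w_t(uv) = (1-t)M < M$, while for $\{u,v\} \notin B^*$ we have $w_t(uv) \leq w^*(uv) + t \leq (M - \delta) + \delta/2 < M$. Hence $\phi(w_t) < M = \phi(w^*)$, contradicting the minimality of $w^*$.

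The main obstacle is the degree bound in the middle step: Lemma~\ref{frac_tiling} can only be re-invoked if the graph $B^*$ of heavy pairs has maximum degree $O(\eps n)$. This is exactly the balance delivered by the identity $\sum_{u} w^*(uv) = 4$ together with the assumed pointwise lower bound $w^*(uv) \geq 1/(\eps n)$ on $B^*$; the rest of the argument is a short convexity computation, and the strict inequality $w(uv) < 1/(\eps n)$ in the conclusion comes out automatically from the contradiction scheme.
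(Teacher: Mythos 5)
Your proposal is correct and follows essentially the same route as the paper: take a minimiser $w^*$ of the maximum pair weight over the polytope of perfect fractional $T$-tilings, double-count to show the graph $B^*$ of heaviest pairs has $\Delta(B^*) \leq 4\eps n$, invoke Lemma~\ref{frac_tiling} to obtain a $B^*$-avoiding perfect fractional tiling, and perturb by a convex combination to contradict minimality. The only cosmetic differences are that you parameterise the perturbation via the gap $\delta$ and coefficient $t=\delta/2$ rather than the paper's single parameter $\mu$, and you are slightly more explicit about rescaling $\eps$ when re-invoking Lemma~\ref{frac_tiling}; both choices are valid.
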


\begin{proof}
For each perfect fractional $T$-tiling $w$ in $H$, define $\psi(w) := \max_{u,v \in V(H)} w(uv)$. Let $W := \min_w \psi(w)$, where the minimum is taken over all perfect fractional $T$-tilings in $H$ (which exists since the set of all perfect fractional $T$-tilings in $H$ is the feasible region of a linear program with integer coefficients), and let $w$ be a perfect fractional $T$-tiling in $H$ which attains this minimum, so $\psi(w) = W$. We may then choose $\mu \in (0, W)$ for which every pair $uv \in \binom{V(H)}{2}$ has either $w(uv) = W$ or $w(uv) < W - \mu$.

Suppose for a contradiction that $W \geq 1/(\eps n)$, and let $B$ be the graph on $V(H)$ consisting of all pairs $uv \in \binom{V(H)}{2}$ with $w(uv) = W$. For each $u \in V(H)$ we then have 
$$W \cdot \deg_B(u) \leq \sum_{v \in V(H) \setminus \{u\}} w(uv) = \sum_{v \in V(H) \setminus \{u\}} \sum_{T' \in \mathcal{T}_{uv}(H)} w(T') = 4 \cdot \sum_{T' \in \mathcal{T}_{u}(H)} w(T') = 4 w(u) = 4,$$ so $\Delta(B) \leq 4/W \leq 4\eps n$. So we may apply Lemma~\ref{frac_tiling} to obtain a perfect  $B$-avoiding fractional $T$-tiling $w'$ in $H$. For each $T' \in \mathcal{T}(H)$ set $w''(T') := (1-\mu) w(T') + \mu w'(T')$. So for every vertex $u \in V(H)$ we have 
$$w''(u) = (1-\mu) w(u) + \mu w'(u) = (1-\mu) + \mu = 1,$$ 
so $w''$ is a perfect fractional $T$-tiling in $H$. Moreover, for each $uv \in E(B)$ we have $w'(uv) = 0$ since $w'$ is $B$-avoiding, and so $w''(uv) = (1-\mu) w(uv) < W$. On the other hand, for each $uv \notin E(B)$ we have $w(uv) < W - \mu$ and so 
$$w''(uv) = (1-\mu)w(uv) + \mu w'(uv) < (1-\mu)(W - \mu) + \mu = W - \mu W + \mu^2 < W.$$
So $\psi(w'') < W$, contradicting our choice of $W$ as the minimum of $\psi(w)$ over all perfect fractional $T$-tilings $w$ in $H$. We conclude that $W < 1/(\eps n)$, as required.
\end{proof}

Theorem~\ref{lem:tiling} follows by combining Lemma~\ref{frac_tiling_with_bound} with the following special case of a theorem of Pippenger and Spencer~\cite{PS}. Note for this that a multi-$k$-graph $G$ is defined identically to a $k$-graph except that the edge set is now a multiset, so edges of $G$ may appear with multiplicity greater than one. Moreover, degrees of vertices and pairs of vertices in $G$ are counted with multiplicity. On the other hand, a matching $M$ in $G$ is still defined to be a set of pairwise-disjoint edges of $G$, so an edge may not appear in $M$ with multiplicity greater than one.

\begin{theorem} \label{regular_to_matching}
Fix $k \geq 2$, suppose that $1/n \ll \delta' \ll \delta, 1/k$, and let $G$ be a multi-$k$-graph on $n$ vertices. If there exists $D \in \mathbb{N}$ for which $(1-\delta') D \leq \deg(u) \leq (1 + \delta') D$ for every $u \in V(G)$ and $\deg(uv) \leq \delta' D$ for all $u, v \in V(G)$, then $G$ admits a matching of size at least $(1-\delta)n/k$.
\end{theorem}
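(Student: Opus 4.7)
The plan is to prove this via the Rödl nibble (semi-random method), which is the standard route to Pippenger--Spencer type bounds. We build the matching in a bounded number of rounds, removing in each round a small random sub-matching that covers a constant fraction of the currently remaining vertices, while preserving approximate regularity and small codegree in the sub-multi-$k$-graph that remains.

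Fix a small constant $p = p(k) > 0$ and an integer $L = L(\delta, k)$ large enough that $(1 - pe^{-kp})^L < \delta/2$. Introduce a hierarchy $\delta' = \delta'_0 \ll \delta'_1 \ll \cdots \ll \delta'_L \ll \delta, 1/k$. The aim is to construct, inductively, a nested sequence of multi-$k$-graphs $G = G_0 \supseteq G_1 \supseteq \cdots \supseteq G_L$, integers $D = D_0 \geq D_1 \geq \cdots \geq D_L$, and matchings $\emptyset = M_0 \subseteq M_1 \subseteq \cdots \subseteq M_L$ in $G$ satisfying, for each $i$: (a) $V(G_i) = V(G) \setminus V(M_i)$; (b) $(1-\delta'_i) D_i \leq \deg_{G_i}(u) \leq (1+\delta'_i) D_i$ for every $u \in V(G_i)$; (c) $\deg_{G_i}(uv) \leq \delta'_i D_i$ for all distinct $u,v \in V(G_i)$; and (d) $|V(G_i)| \leq (1 - pe^{-kp}/2)^i n$. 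Taking $M_L$ as the output matching then yields $|M_L| \geq (n - \delta n)/k$, as required.

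The inductive step is one ``nibble''. Assume $G_i$ satisfies (a)--(c). Independently mark each edge of $G_i$ with probability $q := p/D_i$, let $P$ be the random multiset of marked edges, and let $M'$ be the sub-multiset of $P$ consisting of those marked edges which share no vertex with any other marked edge. Set $M_{i+1} := M_i \cup M'$ and $G_{i+1} := G_i[V(G_i) \setminus V(M')]$. A standard computation, using the codegree bound in (c) to decouple the marking events at edges sharing at most one vertex with a fixed edge $e$, gives $\Pr[e \in M'] = (1 + o(1))\, q\, e^{-kp}$ for each $e \in E(G_i)$, and hence $\Pr[u \in V(M')] = (1+o(1))\, pe^{-kp}$ for each $u \in V(G_i)$. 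Similar local computations yield $\mathbb{E}[\deg_{G_{i+1}}(u) \mid u \notin V(M')] = (1 \pm o(1))\, D_i\, e^{-(k-1)p}$ and $\mathbb{E}[\deg_{G_{i+1}}(uv) \mid u,v \notin V(M')] = O(\delta'_i D_i\, e^{-(k-1)p})$. Choosing $D_{i+1}$ to be the nearest integer to $D_i\, e^{-(k-1)p}$ verifies (b)--(d) in expectation.

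To promote these in-expectation statements to high-probability statements holding simultaneously for all vertices and pairs, we apply Talagrand's inequality (or a bounded-differences/Azuma argument) to each of the relevant counting random variables, all of which are Lipschitz functions of the independent edge-marking Bernoullis. Since $D_i$ decreases by only a constant factor per round, and $L$ depends only on $\delta$ and $k$, we have $D_i \geq n^{\Omega_{k,\delta}(1)}$ throughout, so these deviation bounds are $o(\delta'_{i+1} D_{i+1})$ with failure probability $\exp(-n^{\Omega(1)})$, which survives a union bound over the $O(n^2)$ events. The main obstacle is precisely this concentration bookkeeping: one must choose the hierarchy $\delta'_0 \ll \delta'_1 \ll \cdots \ll \delta'_L$ so that the constant-factor degradation of the regularity and codegree parameters at each round is absorbed, while $D_i$ remains large enough for Talagrand's inequality to supply useful bounds. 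The assumed separation $1/n \ll \delta' \ll \delta, 1/k$, together with the constancy of $L$, comfortably accommodates this.
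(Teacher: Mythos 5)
Your overall route (an iterated nibble with constantly many rounds) is indeed the standard way to prove Pippenger--Spencer type statements; note, though, that the paper does not prove this theorem at all --- it is invoked as a special case of Pippenger and Spencer's result --- so what matters here is whether your from-scratch argument is sound, and there is a genuine gap in the concentration step. You assert that $D_i \geq n^{\Omega_{k,\delta}(1)}$ throughout, and your whole scheme (Talagrand/Azuma bounds with failure probability $\exp(-n^{\Omega(1)})$, union-bounded over the $O(n^2)$ vertex and pair events) rests on this. But nothing in the hypotheses forces $D$ to grow with $n$: the conditions only force $D \geq 1/\delta'$ (since some pair has codegree at least $1$ once there is an edge), and $D$ can be a constant depending on $\delta'$ and $k$ alone while $n \to \infty$ --- for instance, take a disjoint union of complete $k$-graphs each on roughly $k/\delta'$ vertices. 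In that regime no per-vertex high-probability statement of the kind you need is available: a single vertex's degree after one nibble round has fluctuations of order comparable to $D_i$ itself, so properties (b) and (c) simply cannot be maintained for \emph{every} vertex and pair with probability close enough to $1$ to survive a union bound over $n$ vertices.

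The standard proof avoids exactly this by weakening the bookkeeping: one controls the relevant quantities only in expectation (per-vertex second-moment/Chebyshev estimates, then linearity and Markov), allows a small exceptional set of vertices whose degrees drift out of range, discards those vertices (they are simply left to the uncovered set, whose size is controlled in expectation), and never claims simultaneous regularity at every vertex. If you want to keep your high-probability framework you must either add a hypothesis such as $D \geq n^{c}$ (which the theorem does not grant), or restructure the induction so that (b) and (c) are only required outside an exceptional set of size $o(|V(G_i)|)$. A secondary issue, even when $D$ is polynomially large: the naive Lipschitz constant for a vertex degree with respect to one edge-marking indicator is of order $k\delta'_i D_i$, so plain bounded differences gives nothing useful; one needs Talagrand with certificates or a typical-bounded-differences variant, which is part of the ``bookkeeping'' your sketch leaves unexamined.
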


We will use Theorem~\ref{regular_to_matching} in a restated form pertaining to fractional matchings (which are fractional $F$-tilings in the particular case which $F$ is the $k$-graph with $k$ vertices and one edge). For this, let $H$ be a $k$-graph on $n$ vertices, and observe that the set of fractional matchings in $H$ is the feasible region of a linear program with integer coordinates, which is a convex polytope all of whose vertices are rational. It follows that for any fixed $\eps \in \mathbb{R}$, if $H$ admits a fractional matching in which $w(u)\geq 1-\eps$ for every $u \in V(H)$ and $w(uv) \leq \eps$ for all $u, v \in V(H)$, then $H$ admits a fractional matching with this property in which $w(e)$ is rational for every $e \in E(H)$. So we may choose $D \in \mathbb{N}$ to be a common denominator of all of the edge weights in $w$, meaning that for each $e$ we have $w(e) = m(e)/D$ for some integer $m(e)$ with $0 \leq m(e) \leq D$. The multi-$k$-graph $G$ with vertex set $V(G) := V(H)$ which includes each edge $e \in H$ with multiplicity $m(e)$ then satisfies $(1-\eps)D \leq \deg_G(u) \leq D$ for every $u \in V(H)$ and $\deg_G(uv) \leq \eps D$ for all $u, v \in V(H)$. Using Theorem~\ref{regular_to_matching} with $\eps$ and $\eta$ in place of $\delta'$ and $\delta$ respectively, we then obtain the following restatement for fractional matchings.

\begin{corollary} \label{frac_to_almost}
Fix $k \geq 2$ and suppose that $1/n \ll \eps \ll \eta, 1/k$, and let $H$ be a $k$-graph on $n$ vertices. If $H$ admits a fractional matching in which $w(u) \geq 1-\eps$ for every $u \in V(H)$ and $w(uv) \leq \eps$ for all $u, v \in V(H)$, then $H$ admits a matching of size at least $(1-\eta) n/k$.
\end{corollary}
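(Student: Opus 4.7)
The plan is to convert the hypothesised fractional matching into an auxiliary multi-$k$-graph with approximately regular degrees and then apply Theorem~\ref{regular_to_matching}. First I would assume without loss of generality that $\eps$ is rational (if not, replace it by a slightly larger rational value $\eps'$ still satisfying $\eps' \ll \eta, 1/k$; the hypothesis persists with $\eps'$ in place of $\eps$, and the conclusion sought is the same). The set of fractional matchings $w$ in $H$ satisfying $w(u) \geq 1 - \eps$ for every $u$ and $w(uv) \leq \eps$ for every pair $u, v$ is then the feasible region of a linear program with rational coefficients, and is non-empty by hypothesis. As a non-empty bounded rational polyhedron, this region has a rational vertex, so we may pick a fractional matching $w$ satisfying the hypotheses in which $w(e) \in \mathbb{Q}$ for every $e \in E(H)$.

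Next I would let $D \in \mathbb{N}$ be a common denominator of the weights $\{w(e) : e \in E(H)\}$ and form a multi-$k$-graph $G$ on $V(G) := V(H)$ in which each edge $e \in E(H)$ appears with multiplicity $m(e) := D \cdot w(e) \in \{0, 1, \dots, D\}$. For every $u \in V(H)$ we then have $\deg_G(u) = D \cdot w(u) \in [(1 - \eps)D,\, D]$, and for every pair $u, v \in V(H)$ we have $\deg_G(uv) = D \cdot w(uv) \leq \eps D$. In particular $G$ satisfies the hypotheses of Theorem~\ref{regular_to_matching} with $\delta' = \eps$ (noting that $D = (1+0)D \leq (1+\eps)D$, so the upper bound on vertex degrees is met as well).

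Finally, since $1/n \ll \eps \ll \eta, 1/k$ by assumption, Theorem~\ref{regular_to_matching} yields a matching $M$ in $G$ of size at least $(1 - \eta)n/k$. Because a matching in a multi-$k$-graph is defined to be a set of pairwise-disjoint edges with no edge used more than once, $M$ corresponds directly to a matching of the required size in $H$. The only conceptual step in the argument is the extraction of a rational fractional matching, handled above via the rationality of the defining polytope; the rest is bookkeeping, and there is no substantive obstacle beyond correctly tracking the constants in the hierarchy.
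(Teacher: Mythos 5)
Your proof is correct and takes essentially the same approach as the paper: extract a rational fractional matching, scale by a common denominator $D$ to form a multi-$k$-graph with near-regular degrees and small pair-degrees, then apply Theorem~\ref{regular_to_matching}. Your explicit perturbation of $\eps$ to a nearby rational value is a slightly more careful handling of the rationality step than the paper's appeal to the rationality of the matching polytope, but the argument is otherwise identical.
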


At last we present the proof of the tiling lemma.

\begin{proof}[Proof of Lemma~\ref{lem:tiling}]
Introduce a new constant $\eps$ with $1/n \ll \eps \ll \eta$. We may assume that $H$ is not $\gamma$-extremal, and so $H$ admits a perfect fractional $T$-tiling $w$ with $w(uv) < 1/\eps n$ for all $u, v \in V(H)$ by Lemma~\ref{frac_tiling_with_bound}. Let $G$ be the $5$-graph with vertex set $V(G) = V(H)$ whose edges are all sets $S \in \binom{V(H)}{5}$ which support a copy of $T$ in $H$. For each $S \in \binom{V(H)}{5}$, set $w'(S)$ to be the sum of $w(T')$ over all copies $T'$ of $T$ in $H[S]$. This gives a perfect fractional matching $w'$ in $G$ with $w'(uv) = w(uv) < 1/\eps n$ for all $u, v \in V(H)$, so, by Corollary~\ref{frac_to_almost}, $G$ admits a matching $M = \{S_1, S_2, \dots, S_r\}$ of size at least $(1-\eta) n/5$. For each $i \in [r]$ let $T_i$ be a copy of $T$ in $H$ with vertex set $S_i$; then $\{T_1, T_2, \dots, T_r\}$ is an $T$-tiling in $H$ which covers at least $(1-\eta) n$ vertices of $H$, as required.
\end{proof}

We conclude with a remark on the methods applied in this section. The use of Farkas' lemma to obtain a perfect fractional tiling in a $k$-graph $H$, which is then used to obtain an almost-perfect (non-fractional) tiling, follows the approach of several previous works in this area. However, we would highlight the case distinction in the proof of Lemma~\ref{frac_tiling_with_bound} as being a novel element. Through these cases our choice of copies of $T$ which are dominated by sets of the partition of $V(H)$ was dependent on the structure of $H$, specifically the intersection of neighbourhoods of triples of vertices in $\{v_1, v_2, \dots, v_{5\eps n}\}$ which form a triangle in $G$. To our knowledge previous arguments using Farkas' lemma have not used structural information about $H$ in this way. We believe that more sophisticated developments of this approach may enable future work to discover currently unknown minimum degree thresholds for tilings with other $k$-graphs.

\section{Perfect rainbow tilings} \label{sec:rainbow} We conclude by presenting the short derivation of Theorem~\ref{thm:rainbow}. 
The key result which we use for this is a theorem given by Lang~\cite{Lang} which relates the following quantities. 
The \emph{tiling threshold} $\delta_\mathrm{t}$ is the infimum of all $\delta \in [0,1]$ such that for all $\mu > 0$ there exists $n_0$ such that for all $n \geq n_0$ with $5 \mid n$ every $3$-graph $H$ on $n$ vertices with $\delta(H) \geq (\delta + \mu)n$ admits a perfect $T$-tiling. 
Similarly, the \emph{rainbow tiling threshold} $\delta_\mathrm{r}$ is the infimum of all $\delta \in [0,1]$ such that for all $\mu > 0$ there exists $n_0$ such that for all $n \geq n_0$ with $5 \mid n$ every collection $\mathcal{H} = \{H_1, \dots, H_{3n/5}\}$ of $3$-graphs on a common vertex set of size $n \geq n_0$ with $\delta(H_i) \geq (\delta + \mu)n$ for every $i \in [3n/5]$ admits a perfect rainbow $T$-tiling. 
Finally, for $3$-graphs $H_1$ and $H_2$ on a common vertex set $V$, a {\it colour covering homomorphism} from $T$ to $(H_1, H_2)$ is an injective function $\phi: V(T) \rightarrow V$ such that, for some $e \in E(T)$, we have $\phi(e) \in E(H_1)$ and $\phi(e') \in E(H_2)$ for each edge $e' \in E(T)$ with $e' \neq e$. 
The \emph{colour covering threshold} $\delta_\mathrm{c}$ is the infimum of all $\delta \in [0,1]$ such that for all $\mu > 0$ there exists $n_0$ such that for all $n \geq n_0$ every pair $H_1$, $H_2$ of $3$-graphs on a common vertex set of size $n$ with $\delta(H_1), \delta(H_2) \geq (\delta + \mu)n$ admits a colour covering homomorphism from $T$ to $(H_1, H_2)$. We will apply the following special case of a theorem given by Lang. While we state this theorem (and the preceding definitions) only for minimum codegree thresholds and the generalised triangle 3-graph~$T$, the full version of the theorem gives the same statement for all $k$-graphs and for minimum $d$-degree conditions (which we do not define here) for all $d \in [k-1]$.

\begin{theorem}\cite[Theorem~5.11]{Lang} $\delta_{\mathrm{r}} = \max(\delta_{\mathrm{c}}, \delta_{\mathrm{t}})$ \label{thm:lang}
\end{theorem}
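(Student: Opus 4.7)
My plan is to establish both inequalities separately, with the lower bound being straightforward and the upper bound relying on an absorbing argument adapted to the rainbow setting.

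For $\delta_{\mathrm{r}} \geq \max(\delta_{\mathrm{c}}, \delta_{\mathrm{t}})$, I would exhibit two extremal constructions. To see $\delta_{\mathrm{r}} \geq \delta_{\mathrm{t}}$, take any 3-graph $H$ with $\delta(H)$ just below $\delta_{\mathrm{t}} n$ having no perfect $T$-tiling and set $H_1 = H_2 = \cdots = H_{3n/5} = H$; any perfect rainbow $T$-tiling for this $\mathcal{H}$ is in particular a perfect $T$-tiling of $H$. To see $\delta_{\mathrm{r}} \geq \delta_{\mathrm{c}}$, take $H_1, H_2$ witnessing the covering threshold (so no colour covering homomorphism from $T$ to $(H_1, H_2)$ exists) and set $\mathcal{H} = (H_1, H_2, H_2, \ldots, H_2)$. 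A perfect rainbow tiling must assign the unique index $1$ to some copy of $T$, and the two other edges of that copy come from copies of $H_2$, producing a colour covering homomorphism from $T$ to $(H_1, H_2)$, a contradiction.

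For the upper bound $\delta_{\mathrm{r}} \leq \max(\delta_{\mathrm{c}}, \delta_{\mathrm{t}})$, fix $\mu > 0$ and consider a family $\mathcal{H}$ with $\delta(H_i) \geq (\max(\delta_{\mathrm{c}}, \delta_{\mathrm{t}}) + \mu)n$ for every $i$. I would proceed in three steps. First, build a rainbow absorber, namely a small rainbow $T$-tiling $\mathcal{A}$ covering a set $A \subseteq V$ and using an index set $I_A \subseteq [3n/5]$ with $|A|/5 = |I_A|/3$, such that for any sufficiently small balanced remainder $(S, I_S)$ (disjoint from $A$ and $I_A$ and obeying the analogous divisibility conditions) there is a rainbow $T$-tiling of $A \cup S$ using precisely the indices $I_A \cup I_S$. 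The construction would follow the Lo--Markstr\"om template from Section~\ref{sec:absorbing}, with the local swap operation now provided by colour covering homomorphisms: two copies of $T$ in the evolving absorber can be reconfigured to substitute in a new vertex via a colour covering homomorphism in $(H_\ell, H_{\ell'})$ for some pair of indices, and the assumption $\delta(H_i) \geq (\delta_{\mathrm{c}} + \mu)n$ guarantees an abundance of such swaps at each step. Second, set aside the absorber, then randomly partition the remaining indices into triples $\{i_t, j_t, k_t\}$; for each triple, consider the auxiliary $5$-graph whose edges are the $5$-sets supporting a copy of $T$ with one edge in each of $H_{i_t}, H_{j_t}, H_{k_t}$. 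Since each $H_i$ individually satisfies the minimum codegree hypothesis of $\delta_{\mathrm{t}}$, a concentration argument together with the definition of $\delta_{\mathrm{t}}$ (applied after restricting to appropriate colour classes, possibly via a nibble or regularity step) yields a $T$-tiling respecting the index assignment and covering all but $o(n)$ vertices. Third, the leftover vertices and unused indices form a small balanced set, which is absorbed by $\mathcal{A}$ to complete the perfect rainbow $T$-tiling.

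The main obstacle will be the first step: constructing a rainbow absorber flexible enough to simultaneously absorb arbitrary small vertex-remainders and arbitrary small index-remainders. The two absorbing demands are tightly coupled — once an index is committed to the absorber it cannot be reused — and one must propagate the vertex-and-index divisibility conditions consistently throughout the closure argument. This is precisely where the covering threshold $\delta_{\mathrm{c}}$ is essential: the colour covering homomorphism guarantees the local move that allows a given vertex to be swapped into a copy of $T$ using two specified colours, and iterating this move is what lifts pointwise absorption to the required global closure property on (vertex, index) pairs.
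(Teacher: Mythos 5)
This statement is not proved in the paper at all: it is quoted verbatim from Lang \cite{Lang} (Theorem~5.11 there) and used as a black box in Section~\ref{sec:rainbow}, so there is no internal proof to compare your argument against --- what you have attempted is a proof of Lang's theorem itself. The easy half of your sketch, $\delta_{\mathrm{r}} \geq \max(\delta_{\mathrm{c}}, \delta_{\mathrm{t}})$, is correct and routine: taking $H_1=\dots=H_{3n/5}=H$ for an extremal $H$ gives $\delta_{\mathrm{r}}\geq\delta_{\mathrm{t}}$, and the family $(H_1,H_2,H_2,\dots,H_2)$ built from a pair with no colour covering homomorphism gives $\delta_{\mathrm{r}}\geq\delta_{\mathrm{c}}$ (modulo minor housekeeping with the quantifiers and the divisibility $5\mid n$, which is easily fixed by deleting a few vertices).

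The upper bound, however, is where all of the content of the theorem lies, and there your outline has genuine gaps rather than fillable details. First, in your second step you claim that because each $H_i$ satisfies $\delta(H_i)\geq(\delta_{\mathrm{t}}+\mu)n$, a concentration or nibble argument yields an almost-perfect tiling by copies of $T$ each using one edge from each of three \emph{prescribed} colours $H_{i_t},H_{j_t},H_{k_t}$. The definition of $\delta_{\mathrm{t}}$ concerns perfect $T$-tilings of a single $3$-graph and gives no control whatsoever over copies of $T$ with a prescribed colour pattern; the pairwise or triple intersections of the $H_i$ may have very small codegree, so this step simply does not follow from the hypotheses --- managing exactly this colour-pattern constraint (and the matching bookkeeping between leftover colours and leftover vertices) is the heart of Lang's argument, not a routine add-on. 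Second, the rainbow absorber of your first step is acknowledged but not constructed, and the local move you propose does not match the structure of the problem: a colour covering homomorphism for a pair $(H_\ell,H_{\ell'})$ places one edge in one colour and two edges in a second colour, whereas every copy of $T$ in a perfect rainbow tiling consumes three \emph{distinct} colours, so the closure argument on (vertex, colour) pairs needs a stronger local mechanism than the one stated, together with a consistent treatment of the coupled divisibility conditions you yourself flag as the obstacle. As it stands the proposal is a plausible research plan, but to turn it into a proof you would essentially have to rebuild Lang's machinery --- or cite it, as the paper does.
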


\begin{proof}[Proof of Theorem~\ref{thm:rainbow}]
Let $H_1, H_2$ be graphs on a common vertex set $V$ of size $n \geq 5$ with $\delta(H_1), \delta(H_2) \geq 3$. Fix an edge $xyz \in E(H_1)$. Since $\delta(H_2) \geq 3$ we may then choose $u, v \in V \setminus \{x,y,z\}$ in turn with $xyu \in E(H_2)$ and $uvz \in E(H_2)$. The edges $xyz, xyu$ and $uvz$ then form a copy of $T$ in $H_1 \cup H_2$ with $xyz$ being an edge of $H_1$ and $xyu, uvz$ being edges of $H_2$, giving rise to a colour covering homomorphism from $T$ to $(H_1, H_2)$. It follows from the definition of $\delta_\mathrm{c}$ that $\delta_\mathrm{c} = 0$. 

Theorem~\ref{thm:main}, together with the subseqent construction $H_\mathrm{ext}$ which demonstrates that the minimum codegree condition of Theorem~\ref{thm:main} is best possible, shows that $\delta_t = 2/5$. So by 
Theorem~\ref{thm:lang}, we have $\delta_r = 2/5$, and by definition of~$\delta_r$ this completes the proof.
\end{proof}

\end{document}